
\documentclass[12pt]{amsart}
\usepackage{amsfonts,amssymb,latexsym,amsmath, amsxtra,verbatim}
\usepackage[all]{xy}
\usepackage{graphicx}

\pagestyle{myheadings}

\textheight=8.6 true in \textwidth=6.5 true in \hoffset=-0.8true in

\usepackage[OT2,T1]{fontenc}
\DeclareSymbolFont{cyrletters}{OT2}{wncyr}{m}{n}
\DeclareMathSymbol{\Sha}{\mathalpha}{cyrletters}{"58}

\theoremstyle{plain}
\newtheorem{theorem}{Theorem}[section]
\newtheorem{corollary}[theorem]{Corollary}

\newtheorem{lemma}[theorem]{Lemma}
\newtheorem{proposition}[theorem]{Proposition}

\theoremstyle{definition}

\newtheorem{definition}[theorem]{Definition}
\theoremstyle{remark}
\newtheorem*{remark}{Remark}

\numberwithin{equation}{section}

\newcommand{\R}{\mathbb R}
\newcommand{\N}{\mathbb N}
\newcommand{\Z}{\mathbb Z}

\newcommand{\cS}{\mathcal{S}}

\newcommand{\Q}{{\mathbb Q}}

\newcommand{\F}{{\mathbb F}}

\def\P{ {\bf P}}



\def\({\left(}
\def\){\right)}
\def\<{\left<}
\def\>{\right>}

\newcommand{\wt}[1]{\widetilde{#1}}

\newcommand{\abs}[1]{\left|#1\right|}

\def\GL{\text{GL}}

\def\o{{\rm o}}

\def\Arc{\text{\bf Arc}}
\def\First{\text{\bf First}}
\def\Last{\text{\bf Last}}
\def\Pars{\Pi}

\def\uP{\underline{P}}
\def\bF{\text{\bf F}}
\def\bL{\text{\bf L}}
\def\bA{\text{\bf A}}
\def\bC{\text{\bf C}}

\def\bB{{\bf B}}

\begin{document}

\title[set partition statistics: dimension and intertwining exponent ]
{
Closed expressions for averages of set partition statistics
}
\author{Bobbie Chern}
\address{Stanford University, Department of Electrical Engineering, Stanford, CA 94305}
\email{bgchern@stanford.edu}
\author{Persi Diaconis}
\address{Stanford University, Department of Mathematics and Statistics, Sequoia Hall, 390 Serra Mall, Stanford, CA 94305-4065, USA}
\email{diaconis@math.stanford.edu}
\author{Daniel M. Kane}
\address{Stanford University, Department of Mathematics, Bldg 380, Stanford, CA 94305}
\email{dankane@math.stanford.edu}
\author{Robert C. Rhoades}
\address{Stanford University, Department of Mathematics, Bldg 380, Stanford, CA 94305}
\email{rhoades@math.stanford.edu}

\thanks{}

\date{\today}
\thispagestyle{empty} \vspace{.5cm}
\begin{abstract}
In studying the enumerative theory of super characters of the group of upper triangular matrices
over a finite field we found that the moments (mean, variance and higher moments) of novel
statistics on set partitions of $[n] = \{1, 2, \cdots, n\}$
have simple closed expressions as linear combinations of shifted
bell numbers. It is shown here that
families of other statistics have similar moments. The coefficients in the linear combinations
are polynomials in $n$.  This allows exact enumeration of the moments for small
$n$ to determine exact formulae for all $n$.
\end{abstract}


\maketitle

\section{Introduction}
The set partitions of $[n]=\{1, 2, \cdots, n\}$ (denoted $\Pars(n)$) are a classical object of combinatorics.  In studying
the character theory of upper-triangular matrices (see Section \ref{sec:SuperCharacter} for
background) we
were led to some unusual statistics on set partitions.  For a set partition $\lambda$ of $n$, consider
the dimension exponent
$$d(\lambda):=  \sum_{i= 1}^\ell ( M_i - m_i + 1) - n$$
where $\lambda$ has $\ell$ blocks, $M_i$ and $m_i$ are the largest and smallest elements
of the $i$th block. How does $d(\lambda)$ vary with $\lambda$?  As shown below,
its mean and second moment are determined
in terms of the Bell numbers $B_n$
\begin{align*}
\sum_{\lambda \in \Pars(n)} d(\lambda) =& - 2B_{n+2} + (n+4) B_{n+1} \\
\sum_{\lambda \in \Pars(n)} d^2(\lambda) =& 4 B_{n+4} - (4n+15) B_{n+3} + (n^2 + 8n + 4) B_{n+2}
- (4n+3) B_{n+1} + n B_n.
\end{align*}
The right hand sides of these formulae are linear combinations of Bell numbers
with polynomial coefficients.  Dividing by $B_n$ and using asymptotitcs for
Bell numbers (see Section \ref{sec:Asymptotic}) in terms of $\alpha_n$, the positive real solution of
$ue^u = n+1$ (so $\alpha_n = \log(n) - \log \log(n) + \cdots$) gives
\begin{align*}
E(d(\lambda)) = & \( \frac{\alpha_n - 2}{\alpha_n^2}\) n^2 + O\( \frac{n}{\alpha_n}\) \\
{\rm VAR}(d(\lambda)) =& \( \frac{\alpha_n^2 - 7\alpha_n + 17}{\alpha_n^3 (\alpha_n+1)}\)^2 n^3 +
O\( \frac{n^2}{\alpha_n}\).
\end{align*}
This paper gives a large family of statistics that admit similar formulae for all moments.
These include classical statistics such as the number of blocks and number of blocks of size $i$.
It also includes many novel statistics such as $d(\lambda)$ and $c_k(\lambda)$,  the number of
$k$-crossings.  The number of $2$-crossings appears as the intertwining exponent of super
characters.

Careful definitions and statements of our main results are in Section \ref{sec:Results}.
Section \ref{sec:SuperCharacter} reviews the enumerative and probabilistic theory of set partitions,
finite groups and super-characters.  Section \ref{sec:Computational} gives computational
results; determining the coefficients in shifted Bell expressions involves summing over all set
partitions for small $n$.  For some statistics, a fast new algorithm speeds things up.  Proofs
of the main theorems are in Sections \ref{sec:Proofs2} and \ref{sec:Proofs1}.
Section \ref{sec:examples} gives a collection of examples-- moments of order up to six
for $d(\lambda)$ and further numerical data.  In a companion paper \cite{CDKR2},
the asymptotic limiting normality of $d(\lambda)$, $c_2(\lambda)$, and some other statistics
is shown.

\section{Statement of the main results}\label{sec:Results}
Let $\Pars(n)$ be the set partitions of $[n] = \{ 1, 2, \cdots, n\}$ (so $\abs{\Pars(n)} = B_n$, the $n$th
Bell number).  A variety of codings are described in Section \ref{sec:SuperCharacter}.
In this section $\lambda \in \Pars(n)$ is described as $\lambda = \bB_1 | \bB_2 | \cdots| \bB_\ell$
with $\bB_i \cap \bB_j = \emptyset$, $\cup_{i=1}^\ell  \bB_i = [n]$.  Write $
i\sim_\lambda j$ if $i$ and $j$ are in the same block of $\lambda$.  It is notationally
convenient to think of each block as being ordered.  Let $\First(\lambda)$ be the set of elements of
$[n]$ which appear first in their block and $\Last(\lambda)$ be the set of elements of $[n]$ which
occur last in their block.  Finally, let $\Arc(\lambda)$ be the set of distinct pairs of integers $(i,j)$
which occur in the same block of $\lambda$ such that $j$ is the smallest element of the block greater
than $i$. As usual, $\lambda$ may be pictured as a graph with vertex set $[n]$ and
edge set $\Arc(\lambda)$.

For example, the partition $\lambda = 1356|27|4$, represented in Figure \ref{fig:expart},
has $\First(\lambda) = \{ 1, 2, 4\}$, $\Last(\lambda) = \{ 6, 7, 4\}$, and $\Arc(\lambda) = \{(1,3), (3,5), (5,6), (2,7)\}.$
\begin{figure}[!h]
\center
\includegraphics[scale=0.5]{partitionexample.eps}
\caption{An example partition $\lambda = 1356|27|4$}
\label{fig:expart}
\end{figure}

A \emph{statistic} on $\lambda$ is defined by counting the number of occurrences of
\emph{patterns}.  This requires some notation.
\begin{definition} \ \
\begin{enumerate}
\item[($i$)]
A \emph{pattern $\uP$ of length $k$} is defined by a set partition $P$ of $[k]$
and subsets $\bF(\uP), \bL(\uP)\subset[k]$ and $ \bA(\uP), \bC(\uP)  \subset [k]\times [k]$.
Let $\uP = (P, \bF, \bL, \bA, \bC)$.

\item[$(ii)$]
An \emph{occurrence} of a pattern $\uP$ of length $k$ in $\lambda \in \Pars(n)$
is $s = (x_1, \cdots, x_k)$ with $x_i \in [n]$ such that
\begin{enumerate}
\item[(1)] $x_1< x_2< \cdots < x_k$.
\item[(2)] $x_i \sim_\lambda x_j$ if and only if $i \sim_P j$.
\item[(3)] $x_i\in \First(\lambda)$ if $i \in \bF(\uP)$.
\item[(4)] $x_i \in \Last(\lambda)$ if $i \in \bL(\uP)$.
\item[(5)] $(x_i,x_j)\in \Arc(\lambda)$ if $(i,j)\in \bA(\uP)$.
\item[(6)] $\abs{x_i - x_j} = 1$ if $(i,j) \in \bC(\uP)$.
\end{enumerate}
Write $s \in_{\uP} \lambda$ if $s$ is an occurrence of $\uP$ in $\lambda$.

\item[$(iii)$] A \emph{simple statistic} is defined by a pattern $\uP$ of length $k$
and $Q \in \Z[y_1, \cdots, y_k, m]$.  If $\lambda \in \Pars(n)$ and $s = (x_1, \cdots, x_k)
\in_{\uP} \lambda$,
write
$Q(s) = Q\mid_{y_i = x_i, m = n}$. Let
$$f(\lambda) = f_{\uP, Q} (\lambda) := \sum_{s\in_{\uP} \lambda} Q(s).$$
Let the \emph{degree} of a simple statistic $f_{P,Q}$ be the sum of the length of $\uP$
and the degree of $Q$.

\item[$(iv)$]
A \emph{statistic} is a finite $\Q$-linear combination of simple statistics.
The degree of a statistic is defined to be the minimum over such representations
 of the maximum degree of any appearing simple statistic.
\end{enumerate}
\end{definition}

\begin{remark}
In the notation above, $\bF(\uP)$ is the set of firsts elements, $\bL(\uP)$ is the set of lasts,
 $\bA$ is the arc set of the pattern,  and $\bC(\uP)$ is the set of consecutive elements.
\end{remark}

\noindent
{\bf Examples.}
\begin{enumerate}
\item \underline{Number of Blocks in $\lambda$}:
$$\ell(\lambda) = \sum_{ \begin{subarray}{c} 1\le x \le n \\ x \text{ is smallest element in its block}\end{subarray} } 1.$$
Here $\uP$ is a pattern of length 1, $\bF(\uP) = \{ 1\}$,
$\bL(\uP)  = \bA(\uP) = \bC(\uP) = \emptyset$ and $Q(y,m) = 1$.
Similarly, the $n$th moment of $\ell(\lambda)$ can be computed using
\begin{equation*}\label{eqn:moments_ell}
\binom{\ell(\lambda)}{k}  =
f_{\uP_k, 1}(\lambda)
\end{equation*}
where $\uP_k $ is the pattern of length $k$ corresponding to $P$, the partitions of $[k]$ into
blocks of size $1$, with $\bF(\uP_k) = \{ 1, 2, \cdots, k\}$, and $\bL(\uP_k) = \bA(\uP_k) = \bC(\uP_k) = \emptyset$.

\item \underline{Number of blocks of size $i$}:
Define a pattern  $\uP_i$ of length $i$  by: (1)  all elements of $[i]$ are equivalent,
(2) $\bF(\uP_i) = \{ 1\}$,
(3) $\bL(\uP_i) = \{ i\}$,
(4) $\bA(\uP_i) = \{ (1, 2), \cdots, (i-1, i)\}$
and (5) $\bC(\uP_i) = \emptyset$.
Then
\begin{equation}\label{eqn:Xi_def}
X_i(\lambda) := f_{\uP_i, 1}(\lambda)
\end{equation}
 is the number of $i$-blocks in $\lambda$.
(If $i = 1$, $\bA(\uP_1) = \emptyset$.) Similarly, the moments of the number of blocks
of size $i$ is a statistic.  See Theorem \ref{thm:algebraAbstract}.

\item \underline{$k$-crossings}:
A $k$-crossing \cite{cddsy} of a $\lambda \in \Pars(n)$ is a sequence
of arcs $(i_t, j_t)_{1\le t \le k} \in \Arc(\lambda)$ with
$$i_1 < i_2  < \cdots < i_k < j_1 < j_2 < \cdots < j_k.$$
The statistic $cr_k(\lambda)$ which counts the number of $k$-crossings of $\lambda$
can be represented by a pattern $\uP = (P, \bF, \bL, \bA, \bC)$ of length $2k$ with
(1)  $i \sim_P k+i$ for $i = 1, \cdots, k$,
(2) $\bF(\uP) = \bL(\uP) = \emptyset$,
(3)  $\bA(\uP) = \{ (1, k+1), (2, k+2), \cdots, (k, 2k)\}$,
and (4) $\bC(\uP) = \emptyset$.

Partitions with $cr_2(\lambda) = 0$ are in bijection with Dyck paths and so
are counted by the Catalan numbers $C_n = \frac{1}{n+1} \binom{2n}{n}$
(see Stanley's second volume on enumerative combinatorics \cite{stanley2}).
Partitions without crossings have proved themselves to be very interesting.
Crossing seems to have been introduced by Krewaras \cite{kreweras}.  See Simion's \cite{simion} for an
extensive survey and
Chen, Deng, Du, and Stanley   \cite{cddsy} and Marberg \cite{marberg2} for more recent appearances of
this statistic.
The statistic $cr_2(\lambda)$ appears as the intersection exponent in Section \ref{sec:GroupTheory}
below.

\item \underline{Dimension Exponent}: The dimension exponent described in the introduction
is a linear combination of the number of blocks (a simple statistic of degree 1),
the last elements of the blocks (a simple statistic of degree 2),
and the first elements of the blocks (a simple statistic of degree 2).
Precisely, define $f_{firsts}(\lambda) := f_{\uP, Q}(\lambda)$ where $\uP$ is the pattern of length 1, with
$\bF(\uP) = \{ 1\}$, $\bL(\uP) = \bA(\uP)= \bC(\uP)  = \emptyset$ and $Q(y, m) = y$. Similarly,
let $f_{lasts}(\lambda) := f_{\uP, Q}(\lambda)$ where $\uP$ is the pattern of length 1, with
$\bL(\uP) = \{ 1\}$, $\bF(\uP) = \bA(\uP)= \bC(\uP) = \emptyset$ and $Q(y, m) = y$.
Then $$d(\lambda) = f_{lasts}(\lambda) - f_{firsts}(\lambda) +\ell(\lambda) - n.$$

\item \underline{Levels}: The number of levels in $\lambda$ , denoted $f_{levels}(\lambda)$,
(see page 383 of \cite{mansour} or Shattuck \cite{shattuck}) is the number of $i$ such that
$i$ and $i+1$ appear in the same block of $\lambda$.  We have
$$f_{levels}(\lambda) = f_{\uP, Q}(\lambda)$$
where $\uP$ is a pattern of length 2 with $\bC(\uP) = \bA(\uP) = \{ (1, 2)\}$ and $\bA(\uP) = \bF(\uP) = \emptyset$.

\item The maximum block size of a partition is \emph{not} a statistic in this notation.
\end{enumerate}

The set of all statistics on $\cup_{n=0}^\infty \Pars(n) \rightarrow \Q$ is a filtered algebra.
\begin{theorem}\label{thm:algebraAbstract}
Let $\cS $ be the set of all set partition statistics thought of as functions $f:\bigcup_n \Pars(n)\rightarrow \Q$. Then $\cS$ is closed under the operations of pointwise scaling, addition and multiplication. In particular, if $f_1,f_2\in \cS$ and $a\in\Q$, then there exist partition statistics $g_a,g_+,g_*$ so that for all set partitions $\lambda$,
\begin{eqnarray*}
a f_1(\lambda) & = & g_a(\lambda)\\
f_1(\lambda) + f_2(\lambda) & = & g_+(\lambda)\\
f_1(\lambda)\cdot f_2(\lambda) & = & g_*(\lambda).
\end{eqnarray*}
Furthermore, $\deg(g_a)\leq\deg(f_1)$, $\deg(g_+)\leq\max(\deg(f_1),\deg(f_2))$, and $\deg(g_*)\leq \deg(f_1)+\deg(f_2)$. In particular, $\cS$ is a filtered $\Q$-algebra under these operations.
\end{theorem}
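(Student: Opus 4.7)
The plan is to dispose of scaling and addition immediately and then devote the main argument to multiplication. For scaling: if $f_1 = \sum_i c_i f_{\uP_i, Q_i}$ is a representation, then $a f_1 = \sum_i (a c_i) f_{\uP_i, Q_i}$ is a representation of $a f_1$ with the same maximum degree; similarly for $f_1 + f_2$, concatenating the linear combinations gives a representation of maximum degree $\max(\deg f_1, \deg f_2)$. Both degree bounds $\deg(g_a)\le\deg(f_1)$ and $\deg(g_+)\le\max(\deg(f_1),\deg(f_2))$ follow immediately since the degree of a statistic is defined as an infimum over representations. By bilinearity, the substance of the theorem reduces to showing that the product of two \emph{simple} statistics $f_1 = f_{\uP_1, Q_1}$ and $f_2 = f_{\uP_2, Q_2}$ of lengths $k_1$ and $k_2$ is itself a statistic of degree at most $(k_1 + \deg Q_1) + (k_2 + \deg Q_2)$.

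For this, I will stratify the double sum
\[
f_1(\lambda) f_2(\lambda) = \sum_{s_1 \in_{\uP_1} \lambda} \sum_{s_2 \in_{\uP_2} \lambda} Q_1(s_1) Q_2(s_2)
\]
by the \emph{merge type} of the pair $(s_1, s_2)$, defined as follows. Given $s_1 = (x_1, \dots, x_{k_1})$ and $s_2 = (x'_1, \dots, x'_{k_2})$, sort the distinct values appearing in $s_1 \cup s_2$ as $z_1 < z_2 < \cdots < z_k$ with $k \le k_1 + k_2$ and record maps $\alpha\colon [k_1]\to[k]$, $\beta\colon [k_2]\to[k]$ satisfying $x_i = z_{\alpha(i)}$ and $x'_j = z_{\beta(j)}$. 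The pair $T = (\alpha, \beta)$ is the merge type, and there are only finitely many possibilities. For each $T$ I will build a pattern $\uP_T$ of length $k$ and a polynomial $Q_T \in \Z[y_1,\dots,y_k,m]$ so that $f_{\uP_T,Q_T}$ counts precisely the contribution to $f_1 f_2$ from pairs of merge type $T$.

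The data of $\uP_T = (P_T, \bF_T, \bL_T, \bA_T, \bC_T)$ is forced by the defining conditions of an occurrence. I take $P_T$ to be the smallest equivalence relation on $[k]$ that identifies $\alpha(i)$ with $\alpha(j)$ whenever $i \sim_{P_1} j$ and identifies $\beta(i)$ with $\beta(j)$ whenever $i \sim_{P_2} j$. I set $\bF_T = \alpha(\bF(\uP_1)) \cup \beta(\bF(\uP_2))$, and define $\bL_T, \bA_T, \bC_T$ analogously in $[k]$ and $[k]\times[k]$. The polynomial is
\[
Q_T(y_1,\dots,y_k,m) = Q_1(y_{\alpha(1)},\dots,y_{\alpha(k_1)},m) \cdot Q_2(y_{\beta(1)},\dots,y_{\beta(k_2)},m),
\]
of degree at most $\deg Q_1 + \deg Q_2$. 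A bijection argument then shows that for every $\lambda$ the occurrences of $\uP_T$ in $\lambda$ correspond to pairs $(s_1, s_2)$ of merge type $T$ with $s_1 \in_{\uP_1} \lambda$ and $s_2 \in_{\uP_2} \lambda$, and under this bijection $Q_T$ evaluates to $Q_1(s_1) Q_2(s_2)$. Summing over the finitely many merge types expresses $f_1 f_2$ as a linear combination of simple statistics, each of degree $k + \deg Q_T \le (k_1 + k_2) + (\deg Q_1 + \deg Q_2) = \deg f_1 + \deg f_2$, giving $\deg(g_*) \le \deg(f_1) + \deg(f_2)$.

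The step I expect to be the main obstacle is verifying the bijection between occurrences of $\uP_T$ and pairs of merge type $T$, particularly the ``only if'' direction of occurrence condition (2)---this is why $P_T$ must be the equivalence relation \emph{generated} by the two pulled-back relations, not merely their union. Related inconsistencies arise among the constraints (3)--(6): a given merge type may produce a $\uP_T$ for which, say, $(i,j)\in\bA_T$ while $P_T$ separates $i$ and $j$, or $i\in\bF_T$ while $P_T$ forces some earlier index into the same block. Such $\uP_T$ have no occurrences on any $\lambda$ and so contribute zero to $f_1 f_2$, not affecting the degree bound; nevertheless the bookkeeping needs careful verification that the stratification is exhaustive, that each pair of occurrences lies in exactly one merge type, and that all six occurrence conditions pull back correctly along $\alpha$ and $\beta$.
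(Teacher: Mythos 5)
Your treatment of scaling, addition, the reduction to simple statistics via bilinearity, and the degree bookkeeping all match the paper, and stratifying pairs $(s_1,s_2)$ by how they interleave is exactly the paper's ``merge'' idea. But there is a genuine gap in the multiplication step, at precisely the point you flagged: taking $P_T$ to be the equivalence relation \emph{generated} by the two pulled-back relations repairs the ``if'' direction of occurrence condition (2) but not the ``only if'' direction. A pair $(s_1,s_2)$ of merge type $T$ can have merged entries $z_i,z_j$ lying in the same block of $\lambda$ even though $i\not\sim_{P_T} j$; since condition (2) is an equivalence, such a merged tuple is \emph{not} an occurrence of your $\uP_T$, so these pairs are dropped and $\sum_T f_{\uP_T,Q_T}$ undercounts $f_1f_2$. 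Concretely, let $\uP_1=\uP_2$ be the pattern of length $1$ with $\bF=\bL=\bA=\bC=\emptyset$ and $Q_1=Q_2=1$, so $f_1(\lambda)=f_2(\lambda)=n$ and $f_1f_2=n^2$; your three merge types yield $n$ (from the diagonal) plus twice the number of pairs $x<x'$ with $x\not\sim_\lambda x'$, which is strictly less than $n^2$ whenever $\lambda$ has a block of size at least $2$.

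The repair---and this is what the paper does---is to refine the stratification: a merge must record not only the interleaving maps $(\alpha,\beta)$ (the paper's $m_1,m_2$) but also the equivalence relation of the target pattern, which may be \emph{any} equivalence relation on $[k]$ whose pullbacks along $\alpha$ and $\beta$ are exactly $\sim_{P_1}$ and $\sim_{P_2}$; equivalently, one stratifies pairs $(s_1,s_2)$ by the interleaving together with the equivalence relation that $\lambda$ induces on $s_1\cup s_2$. Summing over all such merges (still finitely many, so the bound $k_1+k_2+\deg Q_1+\deg Q_2$ is unaffected) restores the bijection of the paper's Lemma~\ref{lem:merge}; your observation that some target patterns are internally inconsistent and contribute $0$ on every $\lambda$ remains correct and harmless. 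With this one change your argument coincides with the paper's proof.
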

\begin{remark}
Properties of this algebra remain to be discovered.
\end{remark}

\begin{definition}
A \emph{shifted Bell polynomial} is any function $R: \N \to \Q$ given by
$$R(n) = \sum_{I \le j \le K} Q_{j}(n) B_{n+j}$$
where $I, K\in \Z$ and each $Q_j(x) \in \Q[x]$.
i.e. it is  a finite sum of polynomials multiplied by shifted Bell numbers.
Call $K$ the \emph{upper shift degree} of $R$ and $I$ the \emph{lower shift degree} of $R$.
\end{definition}

Our first main theorem shows that the aggregate of a statistic is a shifted Bell polynomial.

\begin{theorem}\label{thm:structureAbstract}
For any statistic, $f$ of degree $N$, there
exists a shifted Bell polynomial $R$ such that for all $n\ge 1$
$$
M(f;n):= \sum_{\lambda \in \Pars(n)} f(\lambda) = R(n).$$
 Moreover,
\begin{enumerate}
\item  the upper shift index of $R$ is at most $N$
and the lower shift index is bounded below by  $-k$, where $k$ is the size of the pattern associated $f$.
\item  the degree of the polynomial coefficient of $B_{n+N-j}$ in $R$ is bounded by $j$ for $j \le N$ and by
$j-1$ for $j > N$.
\end{enumerate}
\end{theorem}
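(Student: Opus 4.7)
My plan is to reduce to simple statistics via Theorem~\ref{thm:algebraAbstract}, then compute the exponential generating function $G(x) = \sum_n M(f; n) x^n/n!$ in closed form and read off the shifted Bell polynomial. By the algebra closure of Theorem~\ref{thm:algebraAbstract} and $\Q$-linearity, it suffices to consider a single simple statistic $f = f_{\uP, Q}$ with pattern $\uP = (P, \bF, \bL, \bA, \bC)$ of length $k$ and $Q$ a monomial of degree $d$ in $y_1, \ldots, y_k, m$; then $\deg f = N = k + d$. Swapping the order of summation gives
\[
M(f_{\uP, Q}; n) = \sum_s Q(s) \cdot N_\uP(s; n),
\]
where $s = (x_1 < \cdots < x_k)$ ranges over valid tuples in $[n]$ and $N_\uP(s; n) = \abs{\{\lambda \in \Pars(n) : s \in_\uP \lambda\}}$.

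The main combinatorial input is a closed form for $N_\uP(s; n)$: given $s$, for each pair (interval $I_j = (x_j, x_{j+1})$, pattern block $\alpha$ of $P$) use the constraints $\bF, \bL, \bA$ to decide whether a non-special element of $I_j$ is allowed to join block $\alpha$, and let $c_j$ be the resulting number of compatible pattern blocks for interval $j$. Each non-special element of $I_j$ independently either joins one of the $c_j$ blocks or becomes an \emph{extra}, with extras forming an unrestricted set partition; writing $a_j = \abs{I_j}$,
\[
N_\uP(s; n) = \sum_{E \geq 0} B_E \cdot [T^E] \prod_{j=0}^{k} (c_j + T)^{a_j}.
\]
Summing over $s$ and passing to the EGF, the decomposition of each $(\lambda, s)$ into pattern blocks plus extras factorizes
\[
G(x) = \Phi(x, e^x) \cdot \exp(e^x - 1),
\]
where $\Phi(x, u) \in \Q[x, u]$ is a polynomial: the pattern-block enumeration has ordinary generating function $z^k / \prod_{j : c_j > 0}(1 - c_j z)$, whose Borel transform is a polynomial in $x$ and $e^x$ of $e^x$-degree $\le \max_j c_j \le m \le k$, where $m$ is the number of blocks of $P$; and $\exp(e^x - 1)$ is the EGF of set partitions, accounting for the extras. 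The polynomial $Q$ is incorporated via natural operators on $G$: each factor of $m = n$ in $Q$ acts as $x\, d/dx$, and each factor $y_i = x_i$ is handled using the identity $x_i = \sum_{j < i} a_j + i$ to decompose into interval-size contributions. Expanding $\Phi(x, u) = \sum_j P_j(x) T_j(u)$ in the Touchard basis, where $T_j(u)$ is the $j$-th Touchard polynomial satisfying $\sum_n B_{n+j} x^n/n! = T_j(e^x) \exp(e^x - 1)$, and reading off the coefficient of $x^n/n!$ yields
\[
M(f; n) = \sum_{j, r} p_{j, r} \cdot (n)_r \cdot B_{n + j - r},
\]
a shifted Bell polynomial, where $P_j(x) = \sum_r p_{j, r} x^r$ and $(n)_r = n(n-1)\cdots(n - r + 1)$.

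The degree bounds follow from careful tracking of $\deg_x$ and $\deg_u$ of $\Phi(x, u)$. One proves $\deg_u \Phi \leq N$ (since the pure pattern-block contribution has $\deg_u \leq k$ and each of the $d$ factors in $Q$ raises $\deg_u$ by at most $1$), and in parallel $\deg_x P_N(x) \leq N$ and $\deg_x P_j(x) \leq j + k$ for $j < N$. Combined with the observation that the monomial summand $p \cdot x^r \cdot T_j(e^x) \exp(e^x - 1)$ contributes $p \cdot (n)_r B_{n + j - r}$---a polynomial of degree $r$ in $n$---to $G$, these bounds on $\Phi$ imply upper shift index $\leq N$, lower shift index $\geq -k$, and the stated bound on the polynomial coefficient of each $B_{n + N - j}$. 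The main technical obstacle is the handling of the factors $y_i$ in $Q$: unlike $m = n$, which corresponds to the simple operator $x\, d/dx$ on $G$, the position variables $y_i$ require decomposition into interval-size operators, and one must verify inductively on $d$ that the resulting operations on $\Phi(x, u)$ preserve the stated $x$- and $u$-degree bounds.
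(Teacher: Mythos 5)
Your overall architecture is sound, and for the unweighted case $Q\equiv 1$ it is essentially the paper's own argument in different clothing: your ``extras'' are the paper's marked block, your factor $\prod_j(c_j+T)^{a_j}$ is the paper's $\prod_j(x+r_j)^{x_{j+1}-x_j-1}$, and your Touchard-basis/Borel-transform observation is Lemma \ref{lem:bell_sums}. The genuine gap is the incorporation of the weight $Q$, which is the technical heart of the theorem. Factors of $m=n$ are indeed implemented by $x\,\frac{d}{dx}$ on $G$, but for the factors $y_i$ there is \emph{no} operator acting on the aggregated EGF $G(x)$: once you have summed over $s$, the positional information needed to weight by $x_i$ is gone, so the weighting must be inserted before aggregation. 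At that stage your identity $x_i=i+\sum_{j<i}a_j$ involves the gap sizes in $[n]$, which count extras as well as non-extras; hence the weight couples the two factors of the claimed product $\Phi(x,e^x)\,e^{e^x-1}$, and the clean ``pattern blocks $\times$ extras'' factorization no longer comes for free. Re-establishing it with the position weights in place is exactly what the paper's Lemma \ref{lem:gf_sum} does (an induction on $k$ with $Q(x_1,\ldots,x_k,n)$ kept inside the sum over $s$), and your proposal defers precisely this step --- you name it as ``the main technical obstacle'' but neither define the interval-size operators nor verify that the factorization survives them. (The constraints in $\bC(\uP)$, which force certain gaps to be empty, are also never addressed, though that part is easy.)

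The degree bounds are likewise asserted rather than proved, and the route you sketch does not deliver them: monomial-by-monomial tracking of $\deg_x$ and $\deg_u$ of $\Phi$ does not give $\deg_x P_j\le j+k$ in the Touchard basis, because cancellations are essential. For instance, take the singleton pattern ($k=1$, so $X_1$) with $Q=m^2$; then $\Phi(x,u)=x+3x^2u+x^3u+x^3u^2$, whose monomial $x^3u$ already violates the bound $r\le j+k$, and only after rewriting in the Touchard basis, $\Phi=xT_0+3x^2T_1+x^3T_2$, do the bounds (and hence the lower shift $\ge -k$, here $n^3B_{n-1}$) emerge. So the induction you postpone must be run in the Touchard basis (where, e.g., $x\,\frac{d}{dx}$ acts by $x^rT_j\mapsto x^{r+1}T_{j+1}+rx^rT_j$), and for the $y_i$ factors it cannot even start until the operators exist. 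Note that the paper obtains the upper shift index and the coefficient-degree bounds (including the sharper $j-1$ for $j>N$) from the soft estimate $M(f;n)=O(n^NB_n)$ together with the fact, via \eqref{eqn:Bell_asymptotic}, that the terms $n^{\alpha}B_{n+\beta}$ have pairwise distinct growth rates, and it gets the lower shift $-k$ from the structural fact that the marked block has size at most $n-k$ fed into Lemma \ref{lem:bell_sums}; you could adopt those arguments and thereby confine the missing work to the factorization with general $Q$, but as written the proposal leaves that central step, and the stated bounds on $\Phi$, unproved.
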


The following collects the shifted Bell polynomials for the aggregates of the statistics given above.
\noindent
{\bf Examples.}
\begin{enumerate}
\item \underline{Number of blocks in $\lambda$}:
$$M(\ell; n) = B_{n+1}  - B_n.$$
This is elementary and is established in Proposition \ref{prop:baby} below.
\item \underline{Number of blocks of size $i$}:
$$M(X_i; n) = \binom{n}{i} B_{n-i}.$$
This is also elementary and is established in Proposition \ref{prop:baby} below.
\item \underline{$2$-crossings}: Kasraoui \cite{kasraouiZ} established
$$M(cr_2; n) = \frac{1}{4} \( - 5 B_{n+2} + (2n+9) B_{n+1} + (2n+1) B_{n}  \).$$
\item \underline{Dimension Exponent}:
$$M(d; n) = -2B_{n+2} + (n+4) B_{n+1}.$$
This is given in Theorem \ref{thm:structure} below.
\item \underline{Levels}:
Shattuck \cite{shattuck} showed that
$$M(f_{levels}; n) = \frac{1}{2}(B_{n+1} - B_n - B_{n-1}).$$
It is amusing that this implies that $B_{3n} \equiv B_{3n+1} \equiv 1\pmod{2}$ and $B_{3n+2} \equiv 0 \pmod{2}$ for all $n\ge 0$.
\end{enumerate}

\begin{remark}
Chapter 8 of Mansour's book \cite{mansour} and the research papers
 \cite{kasraouiZ, ms, kmw} contain many other examples of
statistics which have shifted Bell polynomial aggregates.
We believe that each of these statistics is covered by our class of statistics.
\end{remark}

\section{Set Partitions, Enumerative Group Theory and Super-characters}\label{sec:SuperCharacter}
This section presents background and a literature review of set partitions, probabilistic
and enumerative group theory and super-character theory for the upper triangular group over a finite
field.  Some sharpenings of our general theory are given.

\subsection{Set Partitions}\label{sec:SetPartitions}
Let $\Pars(n,k)$ denote the set partitions of $n$ labelled objects with $k$ blocks and $\Pars(n) = \cup_{k} \Pars(n,k)$; so $\abs{\Pars(n,k)} = S(n,k)$ the Stirling number of the second kind and $\abs{\Pars(n)} = B_n$
the $n$th Bell number.  The enumerative theory and applications of these basic objects is developed in
Graham-Knuth-Patashnick \cite{gkp}, Knuth \cite{knuth4a}, Mansour \cite{mansour}
and Stanley \cite{stanley1}.
There are many familiar equivalent codings
\begin{itemize}
\item Equivalence relations on $n$ objects with $k$ blocks
$$1|2|3\ , \hspace{.2in} 12|3\ , \hspace{.2in} 13|2\ , \hspace{.2in} 1|23\ , \hspace{.2in} 123$$
\item Binary, strictly upper-triangular zero-one matrices with no
two ones in the same row or column.
(Equivalently, rook placements on a triangular Ferris board (Riordan \cite{riordan})
$$ \( \begin{array}{ccc} 0&0&0\\0&0&0\\0&0&0\end{array}\), \ \
\( \begin{array}{ccc} 0&1&0\\0&0&0\\0&0&0\end{array}\), \ \
\( \begin{array}{ccc} 0&0&1\\0&0&0\\0&0&0\end{array}\),  \ \
\( \begin{array}{ccc} 0&0&0\\0&0&1\\0&0&0\end{array}\),  \ \
\( \begin{array}{ccc} 0&1&0\\0&0&1\\0&0& 0\end{array}\)
$$
\item Arcs on $n$ points
\begin{figure}[!h]
\center
\includegraphics[scale=0.4]{arcexample.eps}
\end{figure}
\item Restricted growth sequences $a_1, a_2, \ldots, a_n$; $a_1=0, a_{j+1} \le
1 + \max(a_1, \ldots, a_j)$  for $1 \leq j < n$
(Knuth \cite{knuth4a}, p. 416)
$$012 \ , \hspace{.2in} 001\ , \hspace{.2in} 010\ , \hspace{.2in} 011\ , \hspace{.2in} 000$$
\item Semi-labelled trees on $n+1$ vertices
\begin{figure}[!h]
\center
\includegraphics[scale=0.4]{treeexample.eps}
\end{figure}
\item Vacillating Tableau: A sequence of partitions $\lambda^0, \lambda^1, \cdots, \lambda^{2n}$
with $\lambda^0 = \lambda^{2n} = \emptyset$ and $\lambda^{2i+1}$ is obtained from $\lambda^{2i}$
by doing nothing or deleting a square and $\lambda^{2i}$ is obtained from $\lambda^{2i-1}$ by doing nothing
or adding a square (see \cite{cddsy}).
\end{itemize}

The enumerative theory of set partitions begins with Bell polynomials. Let
$B_{n,k}(w_1, \cdots, w_n) = \sum_{\lambda \in \Pars(n,k)} \prod w_i^{X_i(\lambda)}$
with $X_i(\lambda)$ the number of blocks in $\lambda$ of size $i$; so set
$B_n(w_1, \cdots, w_n) = \sum_{k} B_{n,k}(w_1, \cdots, w_n)$ and
$B(t) = \sum_{n=0}^\infty B_n(\underline{w}) \frac{t^n}{n!}.$
A classical version of the exponential formula gives
\begin{equation}\label{eqn:setpartitions_gf}
B(t) = e^{\sum_{n=1}^\infty w_n \frac{t^n}{n!}}.
\end{equation}
These elegant formulae have been used by physicists and chemists to understand
fragmentation processes (\cite{pitman2} for extensive references).  They also underlie the theory
of polynomials of binomial type \cite{garsia, knuthBiType}, that is, families
$P_n(x)$ of polynomials satisfying
$$P_n(x+y) = \sum P_k(x) P_{n-k}(y).$$
These unify many combinatorial identities, going back to Faa de Bruno's formula for the Taylor
series of the composition of two power series.

There is a healthy algebraic theory of set partitions. The partition algebra of
\cite{hr} is based on a natural product on $\Pars(n)$ which first arose in diagonalizing
the transfer matrix for the Potts model of statistical physics.
The set of all set partitions $\bigcup_n \Pars(n)$ has a Hopf algebra structure which is a
general object of study in \cite{am}.

Crossings and nestings of set partitions is a emerging topic, see \cite{ cddsy, kz, kasragui} and their
references. Given $\lambda \in \Pars(n)$ two arcs
$(i_1, j_1)$ and $(i_2, j_2)$ are said to \emph{cross} if $i_1 < i_2 < j_1 < j_2$ and \emph{nest}
if $i_1 < i_2  < j_2 < j_1$.  Let $cr(\lambda)$ and $ne(\lambda)$ be the number of crossings and nestings.
One striking result: the crossings and nestings are equi-distributed (\cite{kz} Corollary 1.5), they show
$$\sum_{\lambda \in \Pars(n)} x^{cr(\lambda)} y^{ne(\lambda)} = \sum_{\lambda \in \Pars(n)} x^{ne(\lambda)}
y^{cr(\lambda)}.$$
As explained in Section \ref{sec:GroupTheory} below, crossings arise in a group theoretic context and are covered
by our main theorem.
Nestings are also a statistic.
This crossing and nesting literature develops a parallel theory for crossings and nestings of perfect
matchings (set partitions with all blocks of size 2).  Preliminary works suggest that our main theorem carry over to matchings with $B_n$ reduced to $(2n)!/2^nn!$.

Turn next to the probabilistic side: What does a `typical' set partition `look like'? For example, under
the uniform distribution on $\Pars(n)$
\begin{itemize}
\item What is the expected number of blocks?
\item How many singletons (or  blocks of size $i$) are there?
\item What is the size of the largest block?
\end{itemize}
The Bell polynomials can be used to get moments.  For example:
\begin{proposition}\label{prop:baby}
\
\begin{enumerate}
\item[$(i)$] Let $\ell(\lambda)$ be the number of blocks.  Then
\begin{align*}
m(\ell;n) :=& \sum_{\lambda \in \Pars(n)} \ell(\lambda) = B_{n+1} - B_n \\
m(\ell^2;n)  =& B_{n+2} - 3 B_{n+1}  + B_n \\
m(\ell^3;n) =& B_{n+3} - 6B_{n+2} + 8 B_{n+1} B_{n+1} - B_n
\end{align*}
\item[$(ii)$]
Let $X_1(\lambda)$ be the number of singleton blocks, then
\begin{align*}
m(X_1;n) =& nB_{n-1} \\
m(X_1^2;n) =& nB_{n-1} + n(n-1)B_{n-2}
\end{align*}
\end{enumerate}
\end{proposition}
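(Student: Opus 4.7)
The plan is to prove both parts by elementary arguments that exploit the recursive structure of set partitions together with the exponential generating function \eqref{eqn:setpartitions_gf}.

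For part $(i)$, I would begin with the \emph{augmentation bijection}: each $\lambda \in \Pars(n)$ extends to a partition of $[n+1]$ in exactly $\ell(\lambda)+1$ ways, by adjoining $n+1$ to one of the $\ell(\lambda)$ existing blocks or as a new singleton. Summing $1$ over $\Pars(n+1)$ accordingly gives
$$B_{n+1} = \sum_{\lambda \in \Pars(n)} \bigl(\ell(\lambda)+1\bigr) = m(\ell;n) + B_n,$$
which yields $m(\ell;n) = B_{n+1} - B_n$ immediately.

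For the higher moments I would pass to the bivariate EGF
$$F(t,x) = \sum_{n \ge 0} \sum_{\lambda \in \Pars(n)} x^{\ell(\lambda)} \frac{t^n}{n!} = e^{x(e^t - 1)},$$
the standard marked refinement of \eqref{eqn:setpartitions_gf}. Differentiating $k$ times in $x$ and setting $x=1$ extracts the $k$th falling factorial moment of $\ell$:
$$\sum_{\lambda \in \Pars(n)} \ell(\lambda)\bigl(\ell(\lambda)-1\bigr) \cdots \bigl(\ell(\lambda)-k+1\bigr) = n!\,[t^n]\,(e^t-1)^k e^{e^t-1}.$$
Expanding $(e^t-1)^k = \sum_{j=0}^k (-1)^{k-j}\binom{k}{j} e^{jt}$ reduces the problem to the coefficients $a_j(n) := n!\,[t^n]\, e^{jt} e^{e^t-1}$, and the identity $\tfrac{d}{dt} e^{e^t-1} = e^t e^{e^t-1}$ yields the recursion $a_{j+1}(n) = a_j(n+1) - j\, a_j(n)$ with $a_0(n) = B_n$. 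Iterating expresses each $a_j(n)$ as an explicit $\Z$-linear combination of $B_n, B_{n+1}, \ldots, B_{n+j}$; assembling the pieces and passing between ordinary and factorial moments via Stirling numbers of the second kind gives the three closed forms in the statement.

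For part $(ii)$ linearity of expectation is cleanest. Writing $X_1(\lambda) = \sum_{i=1}^n \mathbf{1}[\{i\} \text{ is a block of } \lambda]$ gives
$$m(X_1;n) = \sum_{i=1}^n \#\bigl\{\lambda \in \Pars(n) : \{i\} \text{ is a block of } \lambda\bigr\},$$
and deleting the singleton block $\{i\}$ is a bijection onto $\Pars([n]\setminus\{i\})$, so each inner count is $B_{n-1}$ and the total is $n B_{n-1}$. For the second moment I would write $X_1^2 = X_1 + X_1(X_1-1)$, interpret $X_1(X_1-1)$ as a sum over ordered pairs $(i,j)$ of distinct positions whose singletons $\{i\}, \{j\}$ are both blocks of $\lambda$, and apply the same bijective reduction (removing two singletons leaves a partition of an $(n-2)$-set), contributing $n(n-1) B_{n-2}$ and completing the formula.

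The main obstacle, if any, is pure bookkeeping: for the third factorial moment of $\ell$ one must unroll the recursion for $a_j(n)$ three steps and combine four shifted Bell numbers with the correct signs. None of the steps uses any input beyond the exponential formula \eqref{eqn:setpartitions_gf}; indeed the proposition is a baby case of Theorem \ref{thm:structureAbstract}, but carrying it out directly here keeps the base-case formulas available for sanity checks later in the paper.
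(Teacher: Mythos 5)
Your argument is sound, and it is partly a different route from the paper's. The paper proves everything by specializing the exponential formula: for $(i)$ it differentiates $e^{y(e^x-1)}$ in $y$ at $y=1$ (repeatedly for the higher moments), and for $(ii)$ it differentiates $e^{e^x-1-x+yx}$; your augmentation bijection for $m(\ell;n)$ and your indicator-variable/deletion argument for $X_1$ replace those specializations with direct combinatorial counts, which is more elementary and makes the $nB_{n-1}$ and $n(n-1)B_{n-2}$ terms transparent, while your $a_j(n+1)=j\,a_j(n)+a_{j+1}(n)$ recursion is essentially a fleshed-out version of the paper's ``repeated differentiation,'' since what that differentiation produces are exactly the falling-factorial moments $\sum_\lambda \ell(\ell-1)\cdots(\ell-k+1)$.

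One point you should not gloss over: your final step, converting factorial moments to ordinary moments via Stirling numbers, will \emph{not} reproduce the displayed formulas, because the second and third displayed expressions are in fact the factorial moments, not the power moments. Indeed $\sum_\lambda \ell(\lambda)(\ell(\lambda)-1)=B_{n+2}-3B_{n+1}+B_n$ and $\sum_\lambda \ell(\lambda)(\ell(\lambda)-1)(\ell(\lambda)-2)=B_{n+3}-6B_{n+2}+8B_{n+1}-B_n$ (the statement's ``$8B_{n+1}B_{n+1}$'' is evidently a typo for $8B_{n+1}$), whereas the ordinary moments are $\sum_\lambda \ell(\lambda)^2=B_{n+2}-2B_{n+1}$ and $\sum_\lambda \ell(\lambda)^3=B_{n+3}-3B_{n+2}+B_n$; a check at $n=2$ (where $\sum\ell^2=5$ but $B_4-3B_3+B_2=2$) confirms this. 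So either stop at the factorial moments, which is what the displayed right-hand sides actually are and what the paper's own differentiation yields, or carry out the Stirling conversion and state the corrected power-moment formulas; as written, your claim that the assembly ``gives the three closed forms in the statement'' is arithmetically inconsistent with your (correct) method. Part $(ii)$ has no such issue: $X_1^2=X_1+X_1(X_1-1)$ gives exactly $nB_{n-1}+n(n-1)B_{n-2}$, matching the statement.
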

In accordance with our general theorem, the right hand sides of $(i), (ii)$ are
shifted Bell polynomials. To make contact with results above, there is a direct proof
of these classical formulae.
\begin{proof}
Specializing the variables in the generating function \eqref{eqn:setpartitions_gf} gives a two variable
generating functions for $\ell$:
$$\sum_{n=0}^\infty \sum_{\lambda \in \Pars(n)} y^{\ell(\lambda)} \frac{x^n}{n!} = \sum_{\begin{subarray}{c} n\ge 0 \\ \ell \ge 0 \end{subarray}} S(n,\ell) y^\ell \frac{x^n}{n!} = e^{y(e^x-1)}.$$
Differentiating with respect to $y$ and setting $y=1$ shows that $m(\ell;n)$ is the coefficient of
$\frac{x^n}{n!}$ in $(e^x-1) e^{e^x-1}$.
Noting that $$\frac{\partial}{\partial x} e^{e^x-1} = e^x e^{e^{x-1}} = \sum_{n=0}^\infty B_{n+1} \frac{x^n}{n!}$$
yields $m(\ell) = B_{n+1} - B_n$. Repeated differentiation gives the higher moments.

For $X_1$, specializing variables gives
$$\sum_{n=0}^\infty \sum_{\lambda \in \Pars(n)} y^{X_1(\lambda)} \frac{x^n}{n!}
= e^{e^x - 1 - x + yx}.$$
Differentiation with respect to $y$ and settings $y=1$ readily yields the claimed results.
\end{proof}

The moment method may be used to derive limit theorems. An easier, more systematic method is due to
Fristedt \cite{fristed}. He interprets the factorization of the generating function $B(t)$ in \eqref{eqn:setpartitions_gf}
as a conditional independence result and uses ``dePoissonization'' to get results for finite $n$.
Let $X_i(\lambda)$ be the number of blocks of size $i$.  Roughly, his results say that
$\{X_i\}_{i=1}^n$ are asymptotically independent and of size $(\log(n))^i/i!$.
More precisely, let $\alpha_n$ satisfy $\alpha_n e^{\alpha_n} = n+1$ (so $\alpha_n = \log(n) -
\log\log(n) + o(1)$).  Let $\beta_i = \alpha_n^i/i!$ then
$$\P\big\{ \frac{X_i - \beta_i}{\sqrt{\beta_i}} \le x\big\} = \Phi(x) + o(1)$$
where $\Phi(x) = \frac{1}{\sqrt{2\pi}} \int_{-\infty}^x e^{-u^2/2} du.$ Fristdt also has a
description of the joint distribution of the largest blocks.
\begin{remark}
It is typical to expand the asymptotics in terms of $u_n$ where $u_n e^{u_n} = n$. In this notation
$u_n$ and $\alpha_n$ differ by $O(1/n)$.
\end{remark}

The number of blocks $\ell(\lambda)$ is asymptotically normal when standardized by its
mean $\mu_n \sim \frac{n}{\log(n)}$ and variance $\sigma_n^2 \sim \frac{n}{\log^2(n)}$.  These
are precisely given by Proposition \ref{prop:baby} above.  Refining this, Hwang \cite{hwang} shows
$$\P\big\{ \frac{\ell - \mu_n}{\sigma_n} \le x\big\} = \Phi(x) + O\( \frac{\log(n)}{\sqrt{n}}\).$$

Stam \cite{stam} has introduced a clever algorithm for random uniform sampling of set partitions
in $\Pars(n)$.  He uses this to show that if $W(i)$ is the size of the block containing $i$, $1\le i \le k$, then
for $k$ finite and $n$ large $W(i)$ are asymptotically independent and normal with mean
and variance asymptotic to $\alpha_n$.  In \cite{CDKR2} we use Stam's algorithm to prove
the asymptotic normality of $d(\lambda)$ and $cr_2(\lambda)$.

Any of the codings above lead to distribution questions.  The upper-triangular representation
leads to the study of the dimension and crossing statistics,
the arc representation suggests crossings,
nestings and even the number of
arcs, i.e. $n - \ell(\lambda)$.
Restricted growth sequences suggest the number of zeros, the number of leading zeros,
largest entry.  See Mansour \cite{mansour} for this and much more.
Semi-labelled trees suggest the number of leaves, length of the longest path
from root to leaf and
various measures of tree shape (eg. max degree).  Further probabilistic aspects of uniform
set partitions can be found in  \cite{pitman1, pitman2}.

\subsection{Probabilistic Group Theory}\label{sec:probabilisticGT}
One way to study a finite group $G$ is to ask what `typical' elements `look
like'.  This program was actively begun by Erd\"os and Turan \cite{et1, et2,et3, et4, et5, et6, et7} who
focused on the symmetric group $S_n$.  Pick a permutation $\sigma$ of $n$
at random and ask the following:
\begin{itemize}
\item How many cycles in $\sigma$? (about $\log n$)
\item What is the length of the longest cycle? (about $0.61n$)
\item How many fixed points in $\sigma$? (about 1)
\item What is the order of $\sigma$? (roughly $e^{(\log n)^2/2}$)
\end{itemize}
In these and many other cases the questions are answered with much more
precise limit theorems.  A variety of other classes of groups have been
studied.  For finite groups of Lie type
see \cite{fulman} for a survey and \cite{fg}
for wide-ranging applications.  For $p$-groups
see \cite{newman}.

One can also ask questions about `typical' representations.  For example, fix a
conjugacy class $C$ (e.g. transpositions in the symmetric group),
what is the distribution of $\chi_\rho(C)$ as $\rho$ ranges over
irreducible representations \cite{fulman, kerov, turan}.  Here, two
probability distributions are natural, the uniform distribution on
$\rho$ and the Plancherel measure ($\Pr(\rho) =
d_\rho^2/|G|$ with $d_\rho$ the dimension of $\rho$).
Indeed, the behavior of the `shape' of a random partition of $n$ under the
Plancherel measure for $S_n$ is one of the most celebrated results in modern
combinatorics.  See Stanley's  \cite{stanley} for a survey with references to the work of
Kerov-Vershik \cite{kv}, Logan-Shepp \cite{ls}, Baik-Deift-Johansson \cite{bdj} and many others.

The above discussion focuses on finite groups.  The questions make sense for
compact groups.  For example, pick a random matrix from Haar measure on the unitary group
$U_n$ and ask: What is the distribution of its eigenvalues? This leads to the very
active subject of random matrix theory.  We point to the wonderful monographs
of Anderson-Guionnet-Zietouni \cite{agz}
and Forrester \cite{forrester} which have extensive surveys.


\subsection{Super-character theory}\label{sec:GroupTheory}
Let $G_n(q)$ be the group of $n\times n$ matrices which are upper triangular with ones on the diagonal.
The group $G_n(q)$ is the Sylow $p$-subgroup of $\GL_n(\F_q)$ for $q = p^a$.  Describing the irreducible characters of $G_n(q)$ is a well-known
wild problem.  However, certain unions of conjugacy classes, called superclasses, and certain characters, called supercharacters,
have an elegant theory.  In fact, the theory is rich enough to
provide enough understanding of the Fourier analysis on the group
to solve certain problems, see  the work of Arias-Castro, Diaconis, and Stanley \cite{ADS}.
These superclasses and supercharacters were developed by Carlos Andr\'e \cite{andre1, andre2, andre3}
and Ning Yan \cite{yan}. Supercharacter theory is a growing subject.
See \cite{aguiar, bergeron,  di, dt, marberg, marberg2} and their references.

For the groups $G_n(q)$ the supercharacters are determined by a set partition of $[n]$
and a map
from the set partition to the group $\F_q^\ast$.   In the analysis of these characters there are two important statistics, each of which
only depends on the set partition.  The dimension exponent is denoted $d(\lambda)$ and the intertwining exponent is denoted $i(\lambda)$.

Indeed if $\chi_\lambda$ and $\chi_\mu$ are two supercharacters then
$$\dim\( \chi_\lambda\) = q^{d(\lambda)} \  \ \ \text{ and } \ \ \
\left< \chi_\lambda, \chi_\mu \right> = \delta_{\lambda, \mu} q^{i(\lambda)}.$$
While $d(\lambda)$ and $i(\lambda)$ were originally defined in terms of the upper triangular
representation (for example, $d(\lambda)$ is the sum of the horizontal distance from
the `ones' to the
super diagonal) their definitions can be given in terms of blocks or arcs:
\begin{equation}
d(\lambda) :=  \sum_{e \frown f \in \Arc(\lambda)} \( f - e - 1\)
\end{equation}
and
\begin{equation}
i(\lambda) := \sum_{\begin{subarray}{c}
e_1 < e_2 < f_1 < f_2 \\
e_1 \frown f_1 \in \Arc(\lambda) \\
e_2 \frown f_2  \in \Arc(\lambda)
\end{subarray} } 1
\end{equation}
\begin{remark}
Notice that $i(\lambda) = cr_2(\lambda)$ is the number of 2-crossings which were introduced in the
previous sections.
\end{remark}

Our main theorem shows that there are explicit formulae for every moment of these statistics.
The following represents a sharpening using special properties of the dimension exponent.
\begin{theorem}\label{thm:structure}
For each $k \in \{0, 1, 2, \cdots\}$ there exists a closed form expression
$$M(d^k; n):= \sum_{\lambda \in \Pars(n)} d(\lambda)^k = P_{k, 2k}(n) B_{n+2k} + P_{k, 2k-1}(n) B_{n+2k-1} + \cdots + P_{k,0}(n) B_{n}$$
where each $P_{k, 2k-j}$ is a polynomial with rational coefficients.
Moreover, the degree of $P_{k, 2k-j}$ is
$$ \begin{cases} j & j \le k \\ k - \lceil \frac{j-k}{2} \rceil & j  > k \end{cases}.$$
For example,
\begin{align*}
\sum_{\lambda \in \Pars(n)} d(\lambda) =& -2B_{n+2} + (n+4)B_{n+1}  \\
\sum_{\lambda \in \Pars(n)} d(\lambda)^2 =& 4 B_{n+4} - (4n+15) B_{n+3} + (n^2 + 8n+9) B_{n+2} - (4n+3) B_{n+1} + nB_n
\end{align*}
\end{theorem}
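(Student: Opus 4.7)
The plan is to derive Theorem \ref{thm:structure} as a sharpening of the general machinery already in place. The decomposition $d(\lambda)=f_{lasts}(\lambda)-f_{firsts}(\lambda)+\ell(\lambda)-n$ exhibits $d$ as an element of $\cS$ of degree $2$. By Theorem \ref{thm:algebraAbstract} we then have $d^k\in\cS$ with $\deg(d^k)\le 2k$, and Theorem \ref{thm:structureAbstract} immediately supplies a shifted Bell representation for $M(d^k;n)$ whose upper shift is at most $2k$ and whose $B_{n+2k-j}$-coefficient has polynomial degree at most $j$ for $j\le 2k$. This already establishes the existence of the expansion, the upper-shift bound, and the claimed degree bound for indices $j\le k$. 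What remains is (a) showing the lower shift is nonnegative, so that only $B_n,B_{n+1},\ldots,B_{n+2k}$ appear, and (b) sharpening the coefficient degree for $j>k$ from $j$ down to $k-\lceil (j-k)/2\rceil$.

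For (a) I would work with the arc expansion
\[
d(\lambda)^k=\sum_{(e_1,f_1),\ldots,(e_k,f_k)\in\Arc(\lambda)}\prod_{i=1}^{k}(f_i-e_i-1),
\]
interchange the summation with $\sum_\lambda$, and count set partitions realizing a prescribed arc configuration. Such a count factors through contracting the arcs: the $k$ arcs impose same-block constraints while the $\sum(f_i-e_i-1)$ intermediate positions are forced to lie outside that block, yielding a sum of binomial-weighted Bell numbers. The polynomial weight $\prod(f_i-e_i-1)$ counts exactly these gap positions, so absorbing it into the binomials together with repeated application of the Bell recurrence $B_{n+1}=\sum_{j}\binom{n}{j}B_j$ should express every contribution using only $B_n,\ldots,B_{n+2k}$.

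For (b) I would proceed by induction on $k$ via $d^k=d\cdot d^{k-1}$, tracking how multiplication by each of the four pieces $f_{lasts}$, $-f_{firsts}$, $\ell$, $-n$ of $d$ transforms the shifted Bell representation of the previous step. Each multiplication shifts Bell indices by at most $2$ and raises polynomial degrees by at most $1$, which gives only the naive bound from Theorem \ref{thm:structureAbstract}. The stronger bound reflects a collision trade-off: when the position being summed in multiplying by $d$ coincides with a position already present in the expansion of $d^{k-1}$, one unit of polynomial degree is traded for one unit of additional shift, and careful bookkeeping shows that this trade saves roughly half a degree per unit of shift beyond $k$, producing the $\lceil (j-k)/2\rceil$ in the bound.

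The main obstacle is step (b): the general algebra theorem is insensitive to whether polynomial degree accumulates from distinct or from coincident summation indices, so the ceiling savings must come from an explicit combinatorial accounting of these collisions in the expansion of $d^k$. A useful consistency check is the involution $x\mapsto n+1-x$ on $[n]$, under which $d$ is invariant while $f_{lasts}$ and $f_{firsts}$ are interchanged modulo a linear combination with $\ell$; this symmetry constrains the $P_{k,2k-j}$. The explicit formulae for $k=1,2$ displayed in the statement match the claimed bound exactly and provide a concrete test of the inductive argument.
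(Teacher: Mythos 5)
The easy half of your argument is fine: writing $d=f_{lasts}-f_{firsts}+\ell-n$ and invoking Theorems \ref{thm:algebraAbstract} and \ref{thm:structureAbstract} does give the existence of a shifted Bell expression for $M(d^k;n)$ with upper shift at most $2k$ and with $\deg P_{k,2k-j}\le j$, which covers the range $j\le k$. But the two assertions that make Theorem \ref{thm:structure} strictly stronger than the general theorem --- that the lower shift index is $0$ (no $B_{n-1},\dots,B_{n-k}$ terms) and that $\deg P_{k,2k-j}\le k-\lceil (j-k)/2\rceil$ for $j>k$ --- are exactly the points where your proposal stops at a sketch. For (a), the arc expansion of $d^k$ is a simple-statistic representation of larger degree (each arc pattern carries a degree-one polynomial weight), and Theorem \ref{thm:structureAbstract} applied to it only bounds the lower shift by the negative of the pattern length; your claim that absorbing the gap weights into binomials ``should'' collapse everything onto $B_n,\dots,B_{n+2k}$ is not an argument, and it cannot be finessed by rewriting, since the functions $n^aB_{n+b}$ are asymptotically of distinct sizes, so the shifted Bell representation is essentially unique and the lower-shift claim is a genuine constraint to be proved. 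For (b), you explicitly acknowledge that the ``collision trade-off'' bookkeeping in the induction $d^k=d\cdot d^{k-1}$ is the main obstacle; no mechanism is given that actually produces the half-degree saving per unit of shift below $k$ (note the bound at shift $\ell=2k-j$ reads $\deg\le\lfloor (k+\ell)/2\rfloor$, so even the constant-shift coefficient of $B_n$ is forced down to degree about $k/2$, which your heuristic does not visibly yield).

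For comparison, the paper does not route this theorem through Theorem \ref{thm:structureAbstract} at all. It introduces marked set partitions, proves the recursion of Theorem \ref{thm:recursion}, and converts it into the differential equation \eqref{eqn:diffeq} for the three-variable generating function $F(X,Y,Z)$. Setting $F_k=(Z\partial_Z)^kF|_{Z=1}$ and $G_k=F_k\exp(-(1+Y)(e^X-1))$, it shows inductively (Lemmas \ref{lem:GF_lem2}--\ref{lem:GF_lem3}) that $G_k=\sum C^k_{a,b,c}S^aT^bX^c1$ and that $M(d^k;n)=\sum C^k_{a,b,c}\,n(n-1)\cdots(n-c+1)B_{n+b-c}$, and then the key lemma establishes the vanishing conditions $c\le b$, $b\le 2k$, $3c-b\le k$ (with refinements when $a\ne 0$) by induction on $k$ through the integral recursion \eqref{GrecurseEqn}. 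The inequality $c\le b$ gives the lower shift $0$, and $b\le 2k$ together with $3c-b\le k$ gives $\deg P_{k,\ell}\le\min(2k-\ell,\,k/2+\ell/2)$, i.e.\ exactly the stated degree pattern. If you want to complete your route, you would need to supply, in place of that operator-calculus induction, an explicit combinatorial count proving both the absence of negative shifts and the $3c-b\le k$-type constraint; as written, the proposal has a genuine gap at precisely the content of the theorem.
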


\begin{remark}
See Section \ref{sec:Data} for the moments with $k\le 6$ and see \cite{webpage} for the moments
with $k \le 22$.
The first moment may be deduced easily from results of Bergeron and Thiem \cite{BT}.  Note, they seem to
have an index which differs by one from ours.
\end{remark}

\begin{remark}
Theorem \ref{thm:structure} is stronger than what is obtained directly from
Theorem \ref{thm:structureAbstract}. For example, the lower shift index is $0$, while the best that can be
obtained from Theorem \ref{thm:structureAbstract} is a lower shift index of $-k$.
This theorem is proved by working directly with the generating
function for a generalized statistic on ``marked set partitions''.
These set partitions are introduced in Section \ref{sec:Computational}.
\end{remark}

Asymptotics for the Bell numbers yield the following asymptotics for the moments.
The following result gives some asymptotic information about these moments.
\begin{theorem}\label{thm:asymptotics}
Let $\alpha_n = \log(n) - \log\log(n) +o(1)$
be the positive real solution of $ue^u = n+1$.
Then $$E\( d(\lambda) \) =   \(\frac{\alpha_n - 2}{\alpha_n^2}\)
n^2 + O\( n\alpha_n^{-1}\).$$
Let $S_k(d; n):= \frac{1}{B_n} \sum_{\lambda \in \Pars(n)}  \(d(\lambda) - M(d;n)/B_n\)^k$ be the symmetrized moments of the dimension
exponent. Then
\begin{align*}
S_2(d;n)  =&  \(\frac{\alpha_n^2 - 7\alpha_n+17}{\alpha_n^3(\alpha_n+1)} \)  n^3 + O\(n^2\alpha_n^{-1}\) \\
S_3 (d;n) = &
    \(-\frac{881}{3}  -244 \alpha_n + 145 \alpha_n^{2}  -\frac{83}{3} \alpha_n^{3} + 2 \alpha_n^{4}\)\frac{n^{4}}{\alpha_n^4(\alpha_n+1)^3}  + O\( n^3\alpha_n^{-1}\)  \\
 \end{align*}
\end{theorem}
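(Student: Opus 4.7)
The strategy is to combine Theorem \ref{thm:structure}, which writes each moment $M(d^k;n)$ as a finite linear combination of shifted Bell numbers with polynomial coefficients, with a sufficiently precise asymptotic expansion of the ratios $B_{n+j}/B_n$ in terms of $\alpha_n$. Once such an expansion is in hand, the leading asymptotics of $E(d^k)=M(d^k;n)/B_n$ are obtained by a purely algebraic substitution, and the symmetrized moments $S_k(d;n)$ follow from the standard expansion of central moments in raw moments.

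First, I would record an asymptotic expansion of the form
$$\frac{B_{n+j}}{B_n} = \left(\frac{n+1}{\alpha_n}\right)^{j}\left(1 + \frac{c_{j,1}}{\alpha_n} + \frac{c_{j,2}}{\alpha_n^2} + \cdots\right),$$
valid to as many terms in $1/\alpha_n$ as needed (this follows from the Moser--Wyman saddle-point asymptotics for $B_n$, or alternatively from the recursion $B_{n+1}=\sum_k \binom{n}{k}B_k$ combined with the defining relation $\alpha_n e^{\alpha_n}=n+1$). For the first moment, substituting into $M(d;n)=-2B_{n+2}+(n+4)B_{n+1}$ gives
$$E(d) = -2\frac{(n+1)^2}{\alpha_n^2} + \frac{(n+4)(n+1)}{\alpha_n} + O(n/\alpha_n) = \frac{\alpha_n-2}{\alpha_n^2}\,n^2 + O(n\alpha_n^{-1}),$$
after grouping by powers of $n$ and absorbing lower-order contributions, which verifies the stated leading term.

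For the variance I would plug the same expansion into the closed-form expression for $M(d^2;n)$ from Theorem \ref{thm:structure} and expand each ratio $B_{n+j}/B_n$ far enough in $1/\alpha_n$ that the two $n^4$ contributions cancel upon subtracting $E(d)^2$, leaving the $n^3$ coefficient intact. The combinatorics of this cancellation is guided by the observation that the top Bell coefficient $4B_{n+4}$ in $M(d^2;n)$ produces a leading $n^4(\alpha_n-2)^2/\alpha_n^4$ that exactly matches the square of the mean; the next terms in the expansion, together with the polynomial coefficients of $B_{n+3},B_{n+2}$, must combine to give the claimed numerator $\alpha_n^2-7\alpha_n+17$. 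The third central moment is handled analogously, using
$$S_3(d;n) = E(d^3) - 3E(d)E(d^2) + 2E(d)^3,$$
with the closed form for $M(d^3;n)$ supplied by Theorem \ref{thm:structure} (or read off from the table in Section \ref{sec:Data}).

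The main obstacle is bookkeeping precision: the leading contributions to $M(d^k;n)/B_n$ and to the subtracted products $E(d^j)E(d^{k-j})$ agree, by design, to several orders, so to isolate the $n^{k+1}/\alpha_n^{?}$ leading term of $S_k(d;n)$ one must expand each $B_{n+j}/B_n$ to an order sufficient to survive the cancellation, and carefully track which powers of $n$, $\alpha_n$, and $\alpha_n+1$ appear in the denominators. A clean way to organize this is to work throughout with the auxiliary quantity $\beta_n:=(n+1)/\alpha_n$ (so that $B_{n+1}/B_n \sim \beta_n$), reducing all computations to rational functions in $\alpha_n$; the factor $\alpha_n+1$ that appears in the denominators of the stated asymptotics arises naturally from the first correction term in the Moser--Wyman expansion. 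Once this algebraic bookkeeping is set up, the three displayed asymptotics are obtained by direct substitution.
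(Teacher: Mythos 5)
Your overall route is the same as the paper's: take the closed shifted-Bell forms for $M(d^k;n)$ from Theorem \ref{thm:structure}, divide by $B_n$, insert an asymptotic expansion of the ratios $B_{n+j}/B_n$ in terms of $\alpha_n$, and expand the central moments in raw moments (the paper does exactly this via Proposition \ref{prop:Bn+k} and machine-assisted bookkeeping). The genuine gap is in your key analytic input. The expansion you posit, $B_{n+j}/B_n=\bigl((n+1)/\alpha_n\bigr)^{j}\bigl(1+c_{j,1}/\alpha_n+c_{j,2}/\alpha_n^2+\cdots\bigr)$ with constant $c_{j,i}$, is not the right form: relative to $\frac{(n+j)!}{n!}\alpha_n^{-j}$ the true corrections are of relative size $O(1/n)$ and $O(1/n^2)$ with coefficients that are rational functions of $\alpha_n$ (the factor $\zeta_{n,j}^{-1/2}$ and the terms $R_{1,j}(\alpha_n)/n$, $R_{2,j}(\alpha_n)/n^2$ in Proposition \ref{prop:Bn+k}). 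Since $1/n=e^{-\alpha_n}/\alpha_n\,(1+o(1))$ is smaller than every power of $1/\alpha_n$, "expanding far enough in $1/\alpha_n$" never captures these corrections; in your ansatz all $c_{j,i}$ are in fact $0$, so your scheme only reproduces the crude leading ratio.

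This matters for the theorem as stated. After the cancellation $S_2=E(d^2)-E(d)^2$ removes the $n^4$ terms, the neglected $1/n$-level corrections contribute terms of size $n^3/\bigl(\alpha_n^3(\alpha_n+1)\bigr)$ and smaller (for instance through the $j^2$-part $-j^2/\bigl(2(\alpha_n+1)\bigr)$ of $R_{1,j}-R_{1,0}$, which does not cancel between $B_{n+4}/B_n$ and $(B_{n+2}/B_n)^2$, nor between $B_{n+3}/B_n$ and $(B_{n+1}/B_n)(B_{n+2}/B_n)$), and every such term dominates the claimed error $O(n^2\alpha_n^{-1})$. These corrections are precisely the source of the $(\alpha_n+1)$ denominators and of constants such as $17$; with the crude ratio you would get the correct $n^3\alpha_n^{-2}$ leading behavior but the wrong rational function of $\alpha_n$, hence not the stated result. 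For $S_3$ the situation is stricter still: the cancellation drops two powers of $n$, so you need the ratio expansion through relative order $1/n^2$ (the paper's $R_{2,k}$), not only the "first correction term" you allude to. Two smaller points: the expansion must be carried out at the single saddle $\alpha_n$ uniformly in $j$, as in Proposition \ref{prop:Bn+k} — quoting Moser--Wyman for $B_{n+j}$ at its own saddle forces an extra step relating $\alpha_{n+j}$ to $\alpha_n$ at the same precision — and only your first-moment computation is actually safe with the crude ratio, since there the claimed accuracy is just $O(n\alpha_n^{-1})$.
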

\begin{remark}
Asymptotics for $S_k(d;n)$ with  $k = 1, 2, 3, 4, 5, 6$ and  with further accuracy are in Section \ref{sec:examples}.
\end{remark}

Analogous to these results for the dimension exponent  are
the following results for the intertwining exponent.
\begin{theorem}\label{thm:structureInter}
For each $k \in \{0, 1, 2, \cdots\}$ there exists a closed form expression
$$M(i^k; n):= \sum_{\lambda \in \Pars(n)} i(\lambda)^k =
Q_{k, 2k}(n) B_{n+2k} +  \cdots + Q_{k,0}(n) B_{n}+ \cdots +
Q_{k,-k} (n) B_{n-k}$$
where each $Q_{k, 2k-j}$ is a polynomial with rational coefficients.
Moreover, the degree of $Q_{k, 2k-j}$ is
bounded by $j$.
For example,
\begin{align*}
M(i; n) =& \frac{1}{4}\((2n+1)B_n +(2n+9)B_{n+1}-5B_{n+2}\)\\
M(i^2; n) =& \frac{1}{144}\( (36n^2 +24n - 23) B_{n} + (72n^2 + 72n - 260) B_{n+1}  \right. \\ & \ \ \left. + (36n^2 + 156n + 489) B_{n+2}
- (180n +814)B_{n+3} + 225 B_{n+4} \).
\end{align*}
\end{theorem}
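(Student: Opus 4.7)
The approach I would take mirrors the strategy used for the dimension exponent in Theorem \ref{thm:structure}. Since $i(\lambda) = cr_2(\lambda)$ is itself a simple statistic (the pattern of length $4$ from the examples), Theorem \ref{thm:algebraAbstract} already tells us that $i^k$ is a statistic of degree at most $4k$, and Theorem \ref{thm:structureAbstract} guarantees that $M(i^k; n)$ is a shifted Bell polynomial. However, the bounds supplied by Theorem \ref{thm:structureAbstract} (upper shift $4k$, lower shift $-4k$) are far weaker than what Theorem \ref{thm:structureInter} claims. The entire content of the proof is to tighten these bounds from $4k$ to $2k$ (upper), from $-4k$ to $-k$ (lower), and to pin down the polynomial-degree bound on each $Q_{k,2k-j}$.

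The plan is to expand directly using the arc description of $i$. Write
\[
\sum_{\lambda \in \Pars(n)} i(\lambda)^k = \sum_\lambda \sum_{(C_1,\ldots,C_k)} 1,
\]
where each $C_j = ((e_{1,j}, f_{1,j}), (e_{2,j}, f_{2,j}))$ is a 2-crossing of $\lambda$. Stratify the inner sum by the combinatorial \emph{type} $T$ recording which of the $4k$ endpoints coincide and which of the $2k$ arcs coincide. Each type $T$ corresponds to a pattern $\uP_T$ of some length $v_T \le 4k$, with arc count $a_T \le 2k$ and block count $b_T$. A crucial structural feature of $i$ is that every vertex of the pattern is an endpoint of some arc of $\uP_T$, so $v_T \le 2a_T \le 4k$. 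The inner sum becomes a $\Q$-linear combination over types $T$ of simple-statistic aggregates $M(f_{\uP_T, 1}; n)$.

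The next step is to establish the tight shift bounds for each $M(f_{\uP_T,1};n)$ by explicit enumeration. For a fixed $T$, summing over positions $x_1 < \cdots < x_{v_T}$ and set partitions $\lambda$ extending the prescribed restriction (with the arc constraint that no other element of the block lies strictly between consecutive arc endpoints) yields a shifted Bell polynomial whose upper shift equals $b_T$. Since $b_T \le a_T$ (each block containing an arc must contain at least one arc), and $a_T \le 2k$, the upper shift for each $T$ is at most $2k$. For the lower shift, one uses an inclusion–exclusion expansion (of the type $N_P(n) = \sum_{j} (-1)^j S(b,b-j) B_{n - v_T + b_T - j}$) together with $\binom{n}{v_T}$ to identify the lowest Bell index; the fact that every vertex is an arc-endpoint forces cancellations that push the lower shift up to $-k$ when the bookkeeping is done carefully. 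The polynomial degree bound $\deg Q_{k,2k-j} \le j$ follows from tracking how the choice of the $v_T$ positions contributes a factor of $\binom{n}{v_T}$ (polynomial of degree $v_T$) and combining this with the shifts.

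The main obstacle will be the sharp lower-shift bound of $-k$. The upper-shift bound $2k$ is a direct consequence of $a_T \le 2k$ together with the identity between number of arcs and number of blocks admitting shift contributions. By contrast, Theorem \ref{thm:structureAbstract} only guarantees lower shift $\ge -v_T \ge -4k$, and the improvement to $-k$ requires using the fact that each 2-crossing \emph{pins two arc pairs}, which globally forbids certain degenerations. I would expect this to emerge by expressing $M(i^k;n)$ through the marked–set-partition generating function (analogous to the one underlying Theorem \ref{thm:structure}) and reading off the Bell-polynomial coefficients directly, rather than summing type-by-type. The polynomial-degree bound on $Q_{k, 2k-j}$ should then follow from the degree-count at each order in that generating function.
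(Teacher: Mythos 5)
Your framing matches the paper's starting point: $i=cr_2$ is a simple statistic with a pattern of length $4$, Theorem \ref{thm:algebraAbstract} makes $i^k$ a statistic, and Theorem \ref{thm:structureAbstract} already gives that $M(i^k;n)$ is a shifted Bell polynomial, so only the sharper bounds are at issue. But the way you propose to get those bounds does not hold together. For the upper shift and the coefficient degrees no type-by-type bookkeeping is needed, and the per-type claim you lean on --- that the aggregate of $f_{\uP_T,1}$ is a shifted Bell polynomial ``whose upper shift equals $b_T$'' --- is asserted, not proved, and is false for general patterns (e.g.\ $M(X_i;n)=\binom{n}{i}B_{n-i}$ has upper shift $-i$ while its pattern has a single block), so at best it would need an argument specific to arcs-only patterns which you do not give; the inclusion--exclusion identity you quote is likewise unsupported. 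The paper's route, which is the clean one, is simply: since a partition of $[n]$ has at most $n-1$ arcs, $i(\lambda)\le\binom{n-1}{2}$, hence $M(i^k;n)=O(n^{2k}B_n)$; combining this growth bound with the shifted-Bell form and with \eqref{eqn:Bell_asymptotic}, which shows the functions $n^aB_{n+b}$ have pairwise distinct orders of growth, forces $\deg Q_j+j\le 2k$ for every nonzero coefficient --- exactly the upper shift $2k$ and $\deg Q_{k,2k-j}\le j$. This is the same mechanism used at the end of the proof of Theorem \ref{thm:structureAbstract}; your proposal never invokes it.

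The genuine gap is the lower-shift bound, which you yourself flag as the main obstacle and then leave to the hope that it ``would emerge'' from a marked-partition generating function via unspecified cancellations. No cancellation is exhibited, and the structural observation you offer (every vertex of a merged pattern is an arc endpoint) only bounds the number of blocks of such a pattern by $2k$, not by $k$, so it does not by itself produce the stated $-k$. For comparison, the paper deduces the theorem directly from Theorem \ref{thm:structureAbstract}: the lower shift there comes from Lemma \ref{lem:bell_sums} and \eqref{eqn:Mk_big} and is controlled by the lengths of the patterns appearing in the decomposition of $i^k$ (which are at most $4k$); no crossing-specific cancellation analysis is carried out there either, and the data in Section \ref{sec:dataInter} (with the conjecture that $Q_j=0$ for $j<0$) indicate the negative-shift terms are expected to vanish outright. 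So as written your proposal establishes the shifted-Bell form, can be repaired to give the upper-shift and degree bounds by the growth argument above, but does not prove the claimed lower shift; that step needs an actual argument, not a plan.
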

\begin{remark}
The expression for $M(i;n) = M(cr_2; n)$ was established first by Kasraoui (Theorem 2.3 of \cite{kasraouiZ}).
\end{remark}
\begin{remark}
Theorem \ref{thm:structureInter} is deduced directly from Theorem \ref{thm:structureAbstract}.
The shifted Bell polynomials for $M(i^k;n)$ for $k\le 5$ are given in Section \ref{sec:Data} and see
\cite{webpage} for the aggregates with $k \le 12$.
\end{remark}
\begin{remark}
Amusingly, the formula for $M(i;n)$ implies that the sequence $\{ B_n\}_{n=0}^\infty$ taken modulo 4 is
periodic of length 12 beginning with $\{1, 1, 2, 1, 3, 0, 3, 1, 0, 3, 3, 2\}$. Similarly, the formula for
$M(i^2;n)$ shows that the sequence is periodic modulo $9$ (respectively 16) with period 39 (respectively 48).
For more about such periodicity see the papers of Lunnon, Pleasants, and Stephens \cite{lps} and
Montgomery, Nahm, and Wagstaff \cite{mnw}.
\end{remark}

In analogy with Theorem \ref{thm:asymptotics} there is the following
asymptotic result.
\begin{theorem}\label{thm:asymptoticsInter}
With $\alpha_n$ as above,
$$E(i(\lambda)) = \( \frac{2\alpha_n - 5}{4\alpha_n^2} \) n^2 + O(n\alpha_n^{-1}).$$
Let $S_k( i; n) =  \frac{1}{B_n} \sum_{\lambda \in \Pars(n)} \( i(\lambda)  - M(i, n)/B_n\)^k$.
Then,
\begin{align*}
S_2(i; n) =&\frac{3 \alpha_n^2 - 22\alpha_n + 56}{9\alpha_n^3 (\alpha_n+1)} n^3  + O(n^2\alpha_n^{-1}) \\
S_3(i; n) =&   \frac{(\alpha_n - 5)(4\alpha_n^3 - 31\alpha_n^2 + 100\alpha_n + 99)}{8 \alpha_n^4 (\alpha_n+1)^3} n^4 + O(n^3\alpha_n^{-3}) \\
\end{align*}
\end{theorem}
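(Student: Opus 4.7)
The proof strategy is to combine the explicit shifted Bell polynomial formulas for the moments $M(i^k;n)$ from Theorem~\ref{thm:structureInter} with the Moser--Wyman style asymptotic expansion of $B_{n+s}/B_n$ referenced in Section~\ref{sec:Asymptotic}. The plan has four steps.

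\textbf{Step 1 (Setup).} Apply Theorem~\ref{thm:structureInter} to write, for each $k \le 3$,
\[
M(i^k;n) \;=\; \sum_{j=-k}^{2k} Q_{k,2k-j}(n)\, B_{n+2k-j},
\]
with $\deg Q_{k,2k-j}\le j$. The formulas for $M(i;n)$ and $M(i^2;n)$ are already displayed in Theorem~\ref{thm:structureInter}; the formula for $M(i^3;n)$ can be read off from the explicit tables (cf.\ Section~\ref{sec:Data} and \cite{webpage}) or computed directly from the same proof.

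\textbf{Step 2 (Bell ratios).} Expand $B_{n+s}/B_n$ asymptotically in powers of $1/\alpha_n$ using the standard saddle-point expansion:
\[
\frac{B_{n+s}}{B_n} \;=\; \Bigl(\frac{n}{\alpha_n}\Bigr)^{s}\!\Bigl(1 + \frac{a_{s,1}}{\alpha_n}+\frac{a_{s,2}}{\alpha_n^{2}}+\cdots\Bigr),
\]
where the coefficients $a_{s,j}$ are explicit rational numbers (computed from $\alpha_n e^{\alpha_n}=n+1$). For the first moment, this already gives
\[
E(i(\lambda)) \;=\; \frac{M(i;n)}{B_n} \;=\; \frac{2\alpha_n - 5}{4\alpha_n^2}\,n^{2} + O\!\bigl(n\alpha_n^{-1}\bigr),
\]
after substituting and collecting the leading contribution, which comes from the $B_{n+2}$ term. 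This handles the first claim.

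\textbf{Step 3 (Central moments).} For $S_k(i;n)$ use the binomial expansion
\[
S_k(i;n) \;=\; \sum_{j=0}^{k}\binom{k}{j}(-1)^{k-j}\,\mu_j(n)\,\mu_1(n)^{k-j},\qquad \mu_j(n):=E(i^j(\lambda)).
\]
At top order, each $\mu_j$ behaves like $C\,(n/\alpha_n)^{2j}$ with $C=(1/4)^j(1-\tfrac{5}{2\alpha_n})^j$ obtained by raising the leading term of $\mu_1$ to the $j$th power; these top-order pieces cancel identically in the signed binomial sum, which is the combinatorial manifestation of the law-of-large-numbers effect. The residual leading contribution to $S_k$ comes from the first nontrivial term in the Moser--Wyman expansion that is not already captured by $\mu_1^j$.

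\textbf{Step 4 (Extracting the residue).} Keep the expansions to an order sufficient to capture all terms of size $n^{2k-1}$ relative to $B_n$ (typically three terms in the $1/\alpha_n$ expansion of each $B_{n+s}/B_n$, together with the degree-$j$ part of each $Q_{k,2k-j}$ for $j\le 3$). After cancellation, collect the surviving rational function in $\alpha_n$ and read off the quoted closed forms for $S_2(i;n)$ and $S_3(i;n)$, verifying along the way that the discarded terms are absorbed into the stated error $O(n^{k+1}\alpha_n^{-r})$.

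\textbf{Main obstacle.} The substantive difficulty is bookkeeping in Step~4: the raw moments $\mu_k$ have leading order $n^{2k}/\alpha_n^{k}$, whereas $S_k$ is several orders smaller ($\sim n^{k+1}$ up to logarithmic factors). Extracting these lower-order terms requires carrying enough terms of the Moser--Wyman expansion of every $B_{n+s}/B_n$ appearing, and checking that the cancellations predicted by the binomial structure occur exactly. The computation is routine but algebraically heavy, and is the reason one restricts to small $k$; the cases $k\le 6$ displayed in Section~\ref{sec:examples} are carried out by the same mechanism with computer algebra assistance.
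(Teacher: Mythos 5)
Your overall architecture coincides with the paper's: take the closed shifted-Bell-polynomial formulas for $M(i^k;n)$ from Theorem \ref{thm:structureInter} (and the $Q_{3,j}$ listed in Section \ref{sec:Data} for $k=3$), expand the ratios $B_{n+s}/B_n$ by the saddle-point method of Section \ref{sec:Asymptotic}, and extract $S_2,S_3$ through the binomial cancellation, with the heavy bookkeeping done by computer algebra. However, Step 2 as written contains a genuine error that would derail Steps 3--4. There is no nontrivial expansion of the form $B_{n+s}/B_n=(n/\alpha_n)^s\bigl(1+a_{s,1}/\alpha_n+a_{s,2}/\alpha_n^2+\cdots\bigr)$ with numerical coefficients $a_{s,j}$: relative to $(n/\alpha_n)^s$, the true corrections are of size $\mathrm{poly}(\alpha_n)/n=\mathrm{poly}(\alpha_n)\,e^{-\alpha_n}$, hence smaller than every power of $1/\alpha_n$, so in your proposed form every $a_{s,j}$ is zero and ``three terms of the $1/\alpha_n$ expansion'' carry no information beyond the crude leading term. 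The whole content of the theorem lives exactly at that invisible scale: the terms that survive the cancellation of $\mu_1^k$ against $\mu_k$ are the $1/n$-scale corrections coming from the factor $(n+s)!/n!$ versus $n^s$, from $\zeta_{n,s}^{-1/2}$, and from the $s$-dependent rational functions $R_{1,s}(\alpha_n)/n$ (and $R_{2,s}(\alpha_n)/n^2$ for $S_3$) of Proposition \ref{prop:Bn+k}; note also that in the ratio $B_{n+s}/B_n$ the relevant correction is $(R_{1,s}-R_{1,0})/n$, whose large-$\alpha_n$ growth cancels, which is what makes the stated error bounds (already for the first moment) attainable.

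That this matters at leading order is easy to check on $S_2$: substituting the crude $B_{n+s}/B_n\approx (n/\alpha_n)^s$ into $\mu_2-\mu_1^2$ using the displayed formulas for $M(i;n)$ and $M(i^2;n)$ yields a leading term $n^3/(12\alpha_n^2)$, whereas the correct leading behavior is $n^3/(3\alpha_n^2)$; already keeping the exact factorial factors $B_{n+s}/B_n\approx \frac{(n+s)!}{n!}\alpha_n^{-s}$ supplies the missing $n^3/(4\alpha_n^2)$, and obtaining the full rational functions (the $(\alpha_n+1)$ denominators and the lower-order numerator coefficients) together with the error terms $O(n^2\alpha_n^{-1})$, $O(n^3\alpha_n^{-3})$ further requires the explicit $R_{1,s}$, $R_{2,s}$. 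So the fix is concrete: replace your Step 2 expansion by the expansion of Proposition \ref{prop:Bn+k} --- corrections in powers of $1/n$ with $s$-dependent rational-function coefficients in $\alpha_n$, not constant coefficients in powers of $1/\alpha_n$ --- after which your Steps 1, 3, 4 reproduce the paper's computation (the crude ratio \eqref{eqn:Bell_asymptotic} alone also does not suffice, since its unspecified $O(e^{-\alpha_n})$ relative error is as large as the quantities being computed). Your parenthetical in Step 2 that the first-moment main term ``comes from the $B_{n+2}$ term'' is also off --- both the $B_{n+1}$ and $B_{n+2}$ terms contribute to $\frac{2\alpha_n-5}{4\alpha_n^2}n^2$ --- but that is cosmetic.
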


Theorems \ref{thm:structure} and \ref{thm:structureInter} show that there will be
closed formulae for all of the moments of these statistics. Moreover, these theorems give
bounds for the number of terms in the summand and the degree of each of the polynomials.
Therefore, to compute the formulae
it is enough to compute enough values for $M(d^k;n)$ or $M(i^k;n)$ and then to do linear algebra
to solve for the coefficients of the polynomials.  For example,
$M(d;n)$  needs
$P_{1, 2}(n)$ which has degree at most $0$, $P_{1,1}(n)$ which has degree at most $1$,
and $P_{1, 0}(n)$ which has degree at most 0.  Hence, there are $4$ unknowns, and so only
$M(d;n)$ for $n = 1,2, 3, 4$ are needed to derive the formula for the expected value of the
dimension exponent.

\section{Computational Results}\label{sec:Computational}

Enumerating set partitions and calculating these statistics would take time
$O(B_n)$ (see Knuth's volume \cite{knuth4a} for discussion of how to generate all set partitions
of fixed size, the book of Wilf and Nijenhuis \cite{nw}, or the website \cite{ruski} of Ruskey).
This section introduces a recursion for computing the number of
set partitions of $n$ with a given dimension or intertwining exponent  in time $O(n^4)$.
The recursion follows by introducing a notion of ``marked'' set partitions. This generalization
seems useful in general when computing statistics which depend on the internal
structure of a set partition.
The results may then be used with Theorems \ref{thm:structure} and \ref{thm:structureInter} to
find exact formulae for the moments.  Proofs are given in Section \ref{sec:Proofs2}.

For a set partition
$\lambda$ mark each block either open or closed.  Call such a
partition a {\it marked set partition}.
For each marked set partition $\lambda$
of $[n]$ let $o(\lambda)$ be the number of open blocks
of $\lambda$ and $\ell(\lambda)$ be the total number of blocks of
$\lambda$.
(Marked set partitions may be thought of as what is obtained when
considering a set partition of a potentially larger set and restricting it to $[n]$.
The open blocks are those that will become larger upon adding more elements of this larger set, while the closed blocks are those that will not.)
With this notation define the dimension of $\lambda$
with blocks $\bB_1, \bB_2, \cdots$ by
\begin{equation}
\wt{d}(\lambda) = \left(\sum_{\begin{subarray}{c} \bB_j \\ \bB_j \text{ is closed} \end{subarray}}
\max(\bB_j)\right)  - \left( \sum_{\bB_j } \min(\bB_j) \right) + \ell(\lambda) + n\( \o(\lambda)-1\).
\end{equation}
It is clear that if $o(\lambda) = 0$, then $\lambda$ may be thought of as a usual
`unmarked' set partition and
 $\wt{d}(\lambda) = d(\lambda)$ is the dimension exponent of $\lambda$.
 Define
\begin{equation}
f(n; A, B):= \big\{\lambda \in \Pars(n):  o(\lambda) = A \text{ and }  \wt{d}\(\lambda\) = B \big\}
\end{equation}
\begin{theorem}\label{thm:recursion}
For $n>0$
\begin{align*}
f(n; A, B) =& f(n-1; A-1,B-A+1) + f(n-1;A, B-A )  \\ &+ A f(n-1;A,B-A+1  ) + (A+1) f(n-1; A+1,B-A ).
\end{align*}
with initial condition $f(0; A, B) = 0$ for all $(A,B) \ne (0,0)$ and $f(0;0,0) = 1$.
\end{theorem}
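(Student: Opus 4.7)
I would prove the recursion by a bijective case analysis based on the role of the largest element $n$ in a marked set partition $\lambda$ counted by $f(n;A,B)$. Let $\mu$ be the marked set partition of $[n-1]$ obtained from $\lambda$ by deleting $n$. Since $n$ is the largest element, there are exactly four mutually exclusive cases for the block $\bB$ containing $n$: (a) $\bB = \{n\}$ is open; (b) $\bB = \{n\}$ is closed; (c) $|\bB| \geq 2$ and $\bB$ is open; or (d) $|\bB| \geq 2$ and $\bB$ is closed. In cases (c) and (d) I would adopt the convention that the reduced block $\bB \setminus \{n\}$ is marked open in $\mu$, so that the reverse operation of inserting $n$ into $\mu$ is unambiguous.

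For each case I would then compute the transformation of parameters $(\o(\mu), \wt{d}(\mu))$ in terms of $(\o(\lambda), \wt{d}(\lambda), n) = (A, B, n)$. Writing
\[ \wt{d}(\lambda) = S_{\rm cm}(\lambda) - S_{\rm m}(\lambda) + \ell(\lambda) + n(\o(\lambda)-1), \]
with $S_{\rm cm}(\lambda)$ the sum of maxima over closed blocks and $S_{\rm m}(\lambda)$ the sum of minima over all blocks, a direct calculation yields $(\o(\mu), \wt{d}(\mu)) = (A-1,\, B-A+1)$ in case (a); $(A,\, B-A)$ in case (b); $(A,\, B-A+1)$ in case (c); and $(A+1,\, B-A)$ in case (d). Conversely, each $\mu$ with the appropriate parameters determines a unique $\lambda$ in cases (a) and (b), while in case (c) there are $A$ ways to insert $n$ into an open block of $\mu$ (keeping it open) and in case (d) there are $A+1$ ways to insert $n$ into an open block of $\mu$ and close it. Summing the four contributions yields precisely the stated four-term recursion, and the initial condition at $n=0$ follows from the fact that the empty partition is the unique marked partition of $[0]$, with $\o = \wt{d} = 0$.

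\textbf{Main obstacle.} The primary technical point is the bookkeeping of $\wt{d}$ in each case. The correction term $n(\o(\lambda)-1)$ couples $n$ and $\o(\lambda)$, so passing to $\mu$ requires simultaneously replacing $n$ by $n-1$, possibly shifting $\o$, and tracking whether a given block still contributes to the closed-max sum after that block is deleted or has its marking changed. Once the convention of marking the reduced block open in cases (c) and (d) is fixed, these computations are elementary but must be carried out carefully in each of the four cases to verify that the shifts in $B$ match those prescribed by the recursion.
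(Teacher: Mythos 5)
Your proposal is correct and matches the paper's argument: the paper proves the recursion by exactly the same four-case analysis (adding $n$ as an open singleton, as a closed singleton, into an open block that stays open, or into an open block that is then closed), which is the insertion direction of your deletion bijection, and your parameter shifts $(A-1,B-A+1)$, $(A,B-A)$, $(A,B-A+1)$ with multiplicity $A$, and $(A+1,B-A)$ with multiplicity $A+1$ check out against the definition of $\wt{d}$.
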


Therefore, to find the number of partitions of $[n]$ with dimension exponent equal to $k$, it suffices
to compute $f(n, 0, k)$ for $k$ and $n$.  Figure \ref{fig:dimensionHistograms} gives the
histograms of the dimension exponent when $n=20$ and $n=100$.
With increasing $n$, these distributions tend to normal with mean and variance given in Theorem
\ref{thm:asymptotics}. This approximation is already apparent for $n=20$.
\begin{figure}[h!]
\center
\includegraphics[scale=0.35]{dim20.eps}
\includegraphics[scale=0.35]{dim100.eps}
\caption{Histograms of the dimension exponent counts for $n=20$ and $n=100$.}
\label{fig:dimensionHistograms}
\end{figure}

It is not necessary to compute the entire distribution of the dimension index
to compute the
moment formulae for the dimension exponent.
Namely, it is better to implement the following recursion for the moments.
\begin{corollary}\label{cor:recursion}
Define $M_k(d; n,A):= \sum_{\begin{subarray}{c} \lambda \in \Pars(n) \\ o(\lambda) = A\end{subarray}} d(\lambda)^k$.
Then
\begin{align*}
M_k(d; n,A) =& \sum_{j=0}^k \binom{k}{j} (A-1)^{k-j} M_j(d; n-1, A-1) + \sum_{j=0}^k \binom{k}{j} A^{k-j} M_j(d; n-1,A)\\
&+ A\sum_{j=0}^k \binom{k}{j} (A-1)^{k-j} M_j(d; n-1,A) + (A+1) \sum_{j=0}^{k} \binom{k}{j} A^{k-j} M_j(d; n-1,A+1).
\end{align*}
\end{corollary}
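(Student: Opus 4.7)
The plan is to derive the stated moment recursion directly from the set-partition recursion in Theorem \ref{thm:recursion} by multiplying through by $B^k$ and summing over $B$. Concretely, I would begin by noting the identity
\[
M_k(d;n,A) \;=\; \sum_{B} B^{k}\, f(n;A,B),
\]
valid under the convention that $d(\lambda)$ stands for $\widetilde d(\lambda)$ on marked partitions, since the recursion of Theorem \ref{thm:recursion} tracks precisely this dimension.

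The main step is then to substitute the four-term recursion for $f(n;A,B)$ into the sum $\sum_B B^k f(n;A,B)$ and perform a change of variables on each piece. The first and third terms carry a shift of $-(A-1)$ in $B$, so setting $B' = B-(A-1)$ converts them into $\sum_{B'} (B' + A-1)^k f(n-1;\,\cdot\,,B')$; the second and fourth terms shift by $-A$, giving $\sum_{B'}(B'+A)^k f(n-1;\,\cdot\,,B')$. Expanding each shifted power by the binomial theorem,
\[
(B'+A-1)^k \;=\; \sum_{j=0}^{k}\binom{k}{j}(A-1)^{k-j}(B')^{j}, \qquad (B'+A)^k \;=\; \sum_{j=0}^{k}\binom{k}{j}A^{k-j}(B')^{j},
\]
and recognizing the inner sum $\sum_{B'} (B')^{j} f(n-1;A',B') = M_j(d;n-1,A')$ in each case, one collects exactly the four summands stated in the corollary.

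I do not anticipate any serious obstacle: the argument is essentially a bookkeeping computation that transports Theorem \ref{thm:recursion} through the $k$-th moment functional. The only place to be careful is the boundary conventions, namely that $M_j(d;n-1,A-1)$ should vanish when $A=0$ (consistent with $f(n-1;-1,\,\cdot\,)=0$), and that one includes only finitely many $A$ since $o(\lambda)\leq n$; both follow automatically from the support of $f$. Once these are noted, the four-term identity drops out without further work.
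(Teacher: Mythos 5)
Your derivation is correct and is exactly the intended route: the paper states this as an immediate corollary of Theorem \ref{thm:recursion}, obtained by multiplying the four-term recursion for $f(n;A,B)$ by $B^k$, summing over $B$, shifting the summation variable, and expanding with the binomial theorem, which is precisely your computation (including the observation that $d$ means $\wt{d}$ on marked partitions and that the $A=0$ boundary term vanishes).
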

To compute $M(d^k; n)$, then for each $m<n$ this recursion allows us to keep only $k$ values rather than
computing
all $O(m\cdot m^2)$ values of $f(m, A, B)$.   To find the linear relation of Theorem \ref{thm:structure} only  $O(k\cdot k^2)$ values of $M_k(d;n, A)$ are needed.

In analogy, there is a recursion for the intertwining exponent.
Let $f_{(i)}(n,A,B)$ be the number of marked partitions of $[n]$ with intertwining weight equal to $B$ and with $A$ open sets where
the intertwining weight is equal to the number of interlaced pairs
$i \frown j$ and $k \frown \ell$ where $k$ is in a closed set plus the number of triples $i, k, j$ such that
$i\frown j$ and $k$ is in an open set.
\begin{theorem}\label{thm:recursionInter}
With the notation above, the  following recursion holds
\begin{align*}
f_{(i)}(n+1, A, B) =& f_{(i)}(n, A, B) + f_{(i)}(n, A-1, B) \\&+ \sum_{j=0}^A f_{(i)}(n, A+1, B-j) + \sum_{j=0}^{A-1} f_{(i)}(n, A, B-j).
\end{align*}
\end{theorem}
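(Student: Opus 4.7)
The plan is a case analysis on how the element $n+1$ sits inside a marked set partition $\lambda'$ of $[n+1]$. Write $\lambda$ for the restriction of $\lambda'$ to $[n]$, with the block of $\lambda'$ that formerly contained $n+1$ (if any) marked open in $\lambda$. There are four disjoint cases, corresponding to the four terms on the right-hand side of the recursion:
\begin{itemize}
\item[(I)] $n+1$ is a closed singleton in $\lambda'$ --- here $\lambda$ has $A$ open blocks.
\item[(II)] $n+1$ is an open singleton in $\lambda'$ --- here $\lambda$ has $A-1$ open blocks.
\item[(III)] $n+1$ is appended to an existing open block $O$ of $\lambda$ and $O$ stays open --- here $\lambda$ has $A$ open blocks and there are $A$ choices of $O$.
\item[(IV)] $n+1$ is appended to an existing open block $O$ of $\lambda$ and $O$ is closed in $\lambda'$ --- here $\lambda$ has $A+1$ open blocks and there are $A+1$ choices of $O$.
\end{itemize}

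In Cases (I) and (II) the element $n+1$ creates no real arc and no new crossing; in particular a new open singleton $\{n+1\}$ cannot contribute a crossing because every existing arc $(i,j)$ of $\lambda$ has $j \leq n < n+1$. Hence the weight is unchanged, giving $f_{(i)}(n,A,B) + f_{(i)}(n,A-1,B)$.

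The content of the proof lies in Cases (III) and (IV). Writing $a(O)$ for the largest element of an open block $O$ in $[n]$, the key claim is:

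\emph{Claim.} In both Cases (III) and (IV) the change in intertwining weight equals
\[
\Delta_O \;=\; \#\{O' \text{ open in } \lambda : O' \neq O,\ a(O') > a(O)\}.
\]
Granting the Claim, order the open blocks of $\lambda$ in decreasing order of $a(\cdot)$; the $r$-th open block produces $\Delta = r-1$. Thus as $O$ ranges over the $A$ open blocks of $\lambda$ in Case (III) the multiset of weight changes is exactly $\{0,1,\ldots,A-1\}$, and over the $A+1$ open blocks in Case (IV) it is $\{0,1,\ldots,A\}$. Summing over each fixed $j$ produces the remaining terms $\sum_{j=0}^{A-1} f_{(i)}(n,A,B-j)$ and $\sum_{j=0}^{A} f_{(i)}(n,A+1,B-j)$, finishing the proof.

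The main obstacle is the Claim. The cleanest route is to repackage the weight by attaching to each open block $O$ a formal \emph{future arc} $(a(O), \infty)$, and interpreting the ``triple'' contribution as the count of crossings between a real arc $(i,j)$ and a future arc $(a(O), \infty)$ (i.e., triples with $i < a(O) < j$). With this reformulation the effect of inserting $n+1$ into $O$ breaks cleanly into two ingredients. The new real arc $(a(O), n+1)$ contributes (a) new real 2-crossings with existing arcs $(i,j)$ satisfying $i < a(O) < j$, and (b) new semi-crossings with future arcs of other open blocks whose last element exceeds $a(O)$. Simultaneously the future arc of $O$ either updates to $(n+1,\infty)$ (Case III) or vanishes (Case IV), so in either case the old semi-crossings it created are lost --- and a direct check shows this loss is numerically equal to the type (a) contribution of the new real arc. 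The surviving change is exactly (b), which is the formula in the Claim.
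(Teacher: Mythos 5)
Your proof is correct and takes essentially the same route as the paper: the same four-case analysis of where $n+1$ is inserted, with the paper's one-line assertion (``when adding $n$ to an open set the statistic may increase by any value $j$ and it does so in exactly one way'') being precisely your Claim, which you verify via the future-arc bookkeeping showing the increment equals the number of other open blocks whose largest element exceeds $a(O)$. The only caveat is that your argument implicitly reads the ``triple'' part of the paper's loosely worded weight definition as attached to the \emph{largest} element of each open block (exactly your future arcs), which is indeed the interpretation under which the recursion holds; counting every element of an open block between $i$ and $j$ would make the increments exceed the allowed range and break the recursion.
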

This recursion allows the distribution to be computed rapidly.
Figure \ref{fig:intertwiningHistograms} gives the histograms of the intertwining exponent when $n=20$ and $n=100$.  Again, for increasing $n$ the distribution tends to normal with mean and variance from Theorem
\ref{thm:asymptoticsInter}. The skewness is apparent for $n=20$.
\begin{figure}[h!]
\center
\includegraphics[scale=0.35]{int20.eps}
\includegraphics[scale=0.35]{int100.eps}
\caption{Histograms of the dimension exponent counts for $n=20$ and $n=100$.}
\label{fig:intertwiningHistograms}
\end{figure}

\section{Proofs of Recursions, Asymptotics, and Theorem \ref{thm:structure} }\label{sec:Proofs2}
This section gives the proofs of the recursive formulae discussed in Theorems \ref{thm:recursion} and
\ref{thm:recursionInter}. Additionally, this section gives a proof of Theorem \ref{thm:structure} using the three variable
generating function for $f(n, A, B)$.   Finally, it gives
an asymptotic expansion for $B_{n+k}/B_n$ with $k$ fixed and
$n\to \infty$. This asymptotic is used to deduce Theorems \ref{thm:asymptotics} and \ref{thm:asymptoticsInter}.

\subsection{Recursive formulae}\label{sec:Recursion}
This subsection gives the proof of the recursions for $f(n, A, B)$ and $f_{(i)}(n, A, B)$ given
in Theorems \ref{thm:recursion} and \ref{thm:recursionInter}.
The recursion is used in the next subsection to study the generating function
for the dimension
exponent.

\begin{proof}[Proof of Theorem \ref{thm:recursion}]
The four terms of the recursion come from considering the following cases:
(1) $n$ is added to a marked partition of $[n-1]$ as a singleton open set,
(2) $n$ is added to a marked partition of $[n-1]$ as a singleton closed set,
(3) $n$ is added to an open set of a marked partition of $[n-1]$ and that set remains open,
(4) $n$ is added to an open set of a marked partition of $[n-1]$ and that set is closed.
\end{proof}


\begin{proof}[Proof of Theorem \ref{thm:recursionInter}]
The argument is similar to that of Theorem \ref{thm:recursion}.
The same four cases arise.  However, when adding
$n$ to an open set the statistic may increase by any value $j$ and it does so in exactly one way.
\end{proof}

\subsection{The Generating Function for $f(n, A, B)$}\label{sec:Dimension}
This section studies the generating function for $f(n, A, B)$ and deduces
Theorem \ref{thm:structure}. Let
\begin{equation}
F(X,Y,Z):= \sum_{n, A, B \ge0}f(n; A, B) \frac{X^n}{n!} Y^A Z^B
\end{equation}
be the three variable generating function.
Theorem \ref{thm:recursion} implies that
\begin{equation}\label{eqn:diffeq}
\frac{\partial}{\partial X} F(X,Y,Z) = (1+Y) \( F(X, YZ, Z) + F_Y(X, YZ, Z)\),
\end{equation}
where $F_Y$ denotes $\frac{\partial}{\partial Y} F$.

Then $F(X,0,Z)$ is the generating function for the distribution of $d(\lambda)$, i.e.
$$F(X, 0, Z) = \sum_{n =0}^\infty \sum_{\lambda \in \Pars(n)} Z^{d(\lambda)} \frac{X^n}{n!}.$$
Thus, the generating function for the $k$th moment is
$$\sum_{n\ge 0} M(d^k;n) \frac{X^n}{n!} = \(Z \frac{\partial}{\partial Z}\)^k F(X, Y, Z) \big|_{Z = 1, Y = 0}.$$
Consider
\begin{equation}
F_k(X,Y):= \(Z \frac{\partial}{\partial Z}\)^k F(X, Y, Z) \big |_{Z = 1}.
\end{equation}
So $F_k(X,0) = \sum M(d^k;n) \frac{X^n}{n!}$.
\begin{lemma}\label{lem:GF_lem1}
In the notation above,
$$\( \frac{\partial}{\partial X} - (1+Y) \frac{\partial}{\partial Y}\) F_n(X,Y) =
(1+Y) \sum_{k>0} \binom{n}{k} \( \( Y\frac{\partial}{\partial Y}\)^k \(1+ \frac{\partial}{\partial Y}\)
   F_{n-k}(X,Y)\).$$
\end{lemma}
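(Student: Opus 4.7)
The plan is to apply the Euler operator $(Z\partial_Z)^n$ to both sides of the PDE
\eqref{eqn:diffeq}, evaluate at $Z=1$, and track how the operator acts on a composition like $F(X, YZ, Z)$. On the left side, since $\partial_X$ commutes with $Z\partial_Z$ and the evaluation at $Z=1$, we immediately get $\partial_X F_n(X,Y)$. The factor $(1+Y)$ on the right is independent of $Z$ and pulls through the operator, so all the work lies in understanding $(Z\partial_Z)^n$ acting on $F(X, YZ, Z)$ and on $F_Y(X, YZ, Z)$.

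The key observation is a change-of-variables trick. Writing $W = YZ$ and $V = Z$ (so $Y$ is held fixed), the operator $Z\partial_Z$ corresponds to simultaneous scaling of $W$ and $V$, i.e.
\[
  Z\partial_Z\bigl|_Y \;=\; W\partial_W + V\partial_V.
\]
Since $W\partial_W$ and $V\partial_V$ act on independent variables, they commute, and the binomial theorem gives
\[
  (Z\partial_Z)^n F(X, YZ, Z) \;=\; \sum_{k=0}^{n}\binom{n}{k}(W\partial_W)^k(V\partial_V)^{n-k} F(X,W,V).
\]
Setting $Z = 1$ (hence $V = 1$, $W = Y$), the inner factor becomes $(V\partial_V)^{n-k} F(X,W,V)\big|_{V=1} = F_{n-k}(X,W)$, and then applying $(W\partial_W)^k$ and setting $W = Y$ yields $(Y\partial_Y)^k F_{n-k}(X,Y)$. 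So
\[
  (Z\partial_Z)^n F(X,YZ,Z)\big|_{Z=1}
  \;=\; \sum_{k=0}^{n}\binom{n}{k}(Y\partial_Y)^k F_{n-k}(X,Y).
\]
The same argument applied to $F_Y$ instead of $F$, combined with the fact that $\partial_Y$ commutes with $Z\partial_Z$ and with evaluation at $Z=1$ (so $(F_Y)_{n-k} = \partial_Y F_{n-k}$), gives
\[
  (Z\partial_Z)^n F_Y(X,YZ,Z)\big|_{Z=1}
  \;=\; \sum_{k=0}^{n}\binom{n}{k}(Y\partial_Y)^k \partial_Y F_{n-k}(X,Y).
\]

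Adding these two identities and multiplying by $(1+Y)$ yields an expression in which the $k=0$ term contributes $(1+Y)(1+\partial_Y)F_n(X,Y)$. Transposing this $k=0$ term to the left side then produces the asserted identity
\[
  \bigl(\partial_X - (1{+}Y)(1{+}\partial_Y)\bigr) F_n
  \;=\; (1+Y)\sum_{k>0}\binom{n}{k}(Y\partial_Y)^k(1+\partial_Y) F_{n-k},
\]
which matches the lemma (the $(1+Y)F_n$ absorbed into the left-hand operator).

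The only conceptual obstacle is the commutativity check for $(Z\partial_Z)^n$ under the change of variables; once that is in place, the rest is a routine binomial expansion. A brief sanity check at $n=1$ ($Z\partial_Z F(X,YZ,Z)|_{Z=1} = YF_Y(X,Y,1) + F_1(X,Y)$, which matches $\binom{1}{0}F_1 + \binom{1}{1}Y\partial_Y F_0$) will confirm the formula before writing the general induction/binomial step.
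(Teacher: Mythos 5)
Your argument is the paper's own proof carried out in more detail: the paper likewise applies $(Z\partial_Z)^n$ to \eqref{eqn:diffeq}, obtains $\partial_X F_n = (1+Y)\sum_{k\ge 0}\binom{n}{k}(Y\partial_Y)^k(1+\partial_Y)F_{n-k}$, and then ``solves for $F_n$'' by transposing the $k=0$ term; your change of variables $W=YZ$, $V=Z$, which turns $Z\partial_Z$ into a sum of two commuting Euler operators so that the binomial theorem applies, is exactly the right justification of the step the paper leaves implicit. The one thing to repair is your final sentence: the identity you derive has left-hand operator $\partial_X-(1+Y)(1+\partial_Y)$, and the zeroth-order term $(1+Y)F_n$ cannot be ``absorbed'' into $\partial_X-(1+Y)\partial_Y$; these are genuinely different statements. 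In fact the lemma as printed is off by exactly this term (a typo): for $n=0$ one has $F_0(X,Y)=e^{(1+Y)(e^X-1)}$, the right-hand sum is empty, and $(\partial_X-(1+Y)\partial_Y)F_0=(1+Y)F_0\neq 0$, whereas your version correctly gives $0$. Your corrected identity is moreover the one the paper actually uses downstream: substituting $F_n=G_n e^{(1+Y)(e^X-1)}$ into your equation yields the left side $(\partial_X-(1+Y)\partial_Y)G_n=(\partial_X-\partial_\alpha)G_n=RG_n$ that appears in the proof of Lemma \ref{lem:GF_lem2}. So your derivation is correct and is the intended argument; simply state the conclusion with the operator $\partial_X-(1+Y)(1+\partial_Y)$ (equivalently, keep the extra $-(1+Y)F_n$ on the left) rather than claiming a verbatim match with the printed form.
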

\begin{proof}
From \eqref{eqn:diffeq},
$$\frac{\partial}{\partial X} F_n(X,Y) = (1+Y) \sum_{k} \binom{n}{k} \( \( Y\frac{\partial}{\partial Y}\)^k \(1+ \frac{\partial}{\partial Y}\)
   F_{n-k}(X,Y)\) $$
Hence solving for $F_n$ gives
\begin{equation}\label{eqn:Frecursion}
\( \frac{\partial}{\partial X} - (1+Y) \frac{\partial}{\partial Y}\) F_n(X,Y) =
(1+Y) \sum_{k>0} \binom{n}{k} \( \( Y\frac{\partial}{\partial Y}\)^k \(1+ \frac{\partial}{\partial Y}\)
   F_{n-k}(X,Y)\).
   \end{equation}
\end{proof}

Throughout the remainder $Y = e^{\alpha} - 1$.
Abusing notation, let \begin{equation*}
G_k(X, \alpha) :=
 G_k(X,Y) := F_k(X,Y) \exp\(-(1+Y) (e^X-1)\).
\end{equation*}
The following lemma gives an expression for $G_k(X, \alpha)$ in terms of a
differential operators.
Define the operators
\begin{align*}
R: =& \frac{\partial}{\partial X} - \frac{\partial}{\partial \alpha}\\
S:= & e^{\alpha}\\
T:= & \frac{\partial}{\partial \alpha} + e^{X+\alpha}.
\end{align*}
\begin{lemma}\label{lem:GF_lem2}
Clearly $G_0(X,Y) = 1$.  Moreover,
$$G_{k}(X, \alpha) = \sum_{a, b, c} C^k_{a,b,c}S^a T^b X^c 1,$$
\end{lemma}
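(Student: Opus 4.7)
Proof proposal: I would proceed by induction on $k$. The base case $G_0 = F_0\cdot\phi^{-1} = 1 = S^0 T^0 X^0\cdot 1$ is immediate.

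For the inductive step, the plan is to translate Lemma \ref{lem:GF_lem1} into an equation directly for $G_n$ via the substitution $F_m = \phi G_m$. Starting from $\phi = \exp(e^\alpha(e^X-1))$ and using $1+Y = e^\alpha$, $\partial_Y = e^{-\alpha}\partial_\alpha$, a direct computation yields the conjugation identities
\begin{align*}
\phi^{-1}\,(1+Y)(1+\partial_Y)\,\phi &= T,\\
\phi^{-1}\,(Y\partial_Y)\,\phi &= (1-e^{-\alpha})(T-S) \;=:\; A,
\end{align*}
so that $\phi^{-1}(Y\partial_Y)^k\phi = A^k$. Combining these with Lemma \ref{lem:GF_lem1} produces the $G_n$-recursion
\begin{equation*}
R\,G_n \;=\; \sum_{k>0}\binom{n}{k}\, S\, A^k\, S^{-1}\, T\, G_{n-k}.
\end{equation*}
By the inductive hypothesis each $G_{n-k}$ lies in the subspace $\mathcal{B}$ spanned by the monomials $S^aT^bX^c\cdot 1$, and the operators $S^{\pm 1}, T, X$ (and therefore $A$) preserve $\mathcal{B}$, so $RG_n \in \mathcal{B}$.

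Next I would invert $R$ on $\mathcal{B}$. Using the commutator relations $[R,S] = -S$, $[R,T] = 0$, $[R,X] = 1$, one obtains the explicit action
\begin{equation*}
R\bigl(S^aT^bX^c\cdot 1\bigr) \;=\; -a\,S^aT^bX^c\cdot 1 \;+\; c\,S^aT^bX^{c-1}\cdot 1.
\end{equation*}
This triangular formula shows that $R$ is invertible on the subspace with $a\neq 0$, while $\ker R\cap\mathcal{B} = \mathrm{span}\{T^b\cdot 1 : b\geq 0\}$ parametrizes the ambiguity on the $a=0$ subspace. Block-by-block integration in $X$ produces a particular solution $\tilde G_n \in\mathcal{B}$, and the boundary condition $G_n(0,\alpha)=0$ (which follows from $F_n(0,Y)=0$ for $n\geq 1$ and $\phi(0,Y)=1$) pins down the remaining kernel piece.

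The main obstacle is verifying that the boundary-condition correction legitimately lies in $\mathcal{B}$. Evaluated at $X = 0$, the kernel $\ker R\cap \mathcal{B}$ reaches only $\mathrm{span}\{e^{p\alpha}: p\geq 0\}$, whereas the $S^{-1}$ factors in $A^k$ could a priori introduce negative powers $e^{q\alpha}$ into $\tilde G_n(0,\alpha)$. I expect to handle this by strengthening the inductive hypothesis to track a lower bound on the $S$-exponent appearing in the monomial decomposition of $G_k$, and then showing that the negative-power contributions arising in $S A^k S^{-1}T\, G_{n-k}$ cancel identically at $X=0$—either via careful rewriting using $SAS^{-1} = T - S - S^{-1}T + S^{-1}$ and its iterates, or by appealing to uniqueness of $G_n$ to match the EGF-defined solution against the $\mathcal{B}$-valued solution produced by the inversion.
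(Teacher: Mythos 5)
Your derivation of the $G$-recursion matches the paper's: the conjugation identities are correct, and since $SAS^{-1}=T-TS^{-1}-S$, your equation $R\,G_n=\sum_{k>0}\binom{n}{k}S A^k S^{-1}T\,G_{n-k}$ is exactly the paper's recursion (though the expansion you quote later is off: $A=T-S-S^{-1}T+1$ and $SAS^{-1}=T-TS^{-1}-S$, not $T-S-S^{-1}T+S^{-1}$). Your formula $R\,(S^aT^bX^c1)=-a\,S^aT^bX^c1+c\,S^aT^bX^{c-1}1$ is also right. The problem is the step you yourself flag as the ``main obstacle'': after producing a particular solution $\tilde G_n\in\mathcal{B}$ you must add a kernel element to enforce $G_n(0,\alpha)=0$, and you give no argument that this correction, $-\tilde G_n(0,X+\alpha)$, lies in $\mathcal{B}$; its restriction to $X=0$ may contain negative powers of $e^{\alpha}$ coming from the $S^{-1}$'s, whereas $\ker R\cap\mathcal{B}=\mathrm{span}\{T^b1\}$ supplies only nonnegative powers of $e^{X+\alpha}$. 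Neither proposed remedy is carried out, and the second one (``appeal to uniqueness of $G_n$'') is circular: uniqueness of the generating-function-defined $G_n$ is not in question; what must be shown is precisely that this unique function has a representation in $\mathcal{B}$, and no $\mathcal{B}$-valued solution with the correct boundary value has been produced to compare against. So, as written, the induction does not close.

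The paper avoids the split into ``particular solution plus kernel correction'' altogether. Since $R=\partial_X-\partial_\alpha$ and $G_k(0,\alpha)=0$, it writes the solution in closed form as an integral along characteristics, $G_k(X,\alpha)=\int_0^X\sum_{\ell>0}\binom{k}{\ell}\bigl(T-TS^{-1}-S\bigr)^{\ell}T\,G_{k-\ell}(t,X+\alpha-t)\,dt$, so the boundary condition is built in, and the only remaining point is that this integral carries each monomial $S^aT^bX^c1$ back into the span: for $a=0$ it equals $\tfrac{1}{c+1}T^bX^{c+1}1$ because $T^b1$ is a function of $X+\alpha$ alone, and for $a\neq0$ it is a short explicit computation (recorded in the paper in the proof of the following lemma on the coefficients $C^k_{a,b,c}$). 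If you want to salvage your route, the cleanest fix is to adopt exactly this device: replace the abstract inversion of $R$ by this concrete integral operator, which simultaneously solves $RG=h$ and kills the value at $X=0$, after which your monomial bookkeeping and induction go through.
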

\begin{proof}
\eqref{eqn:Frecursion} is equivalent to
\begin{align*}
&\( \frac{\partial}{\partial X} + (1+Y) e^X - (1+Y) \( \frac{\partial}{\partial Y} + e^X\)\) G_n(X,Y)  \\
=& (1+Y) \sum_{k>0} \binom{n}{k} \( Y \( \frac{\partial }{\partial Y} +e^X -1\)\)^k  \( \frac{\partial}{\partial Y} + e^X\) G_{n-k}
\end{align*} Now
$$
\(\frac{\partial}{\partial X} - \frac{\partial}{\partial \alpha}\) G_k(X,\alpha) =
\sum_{\ell > 0} \binom{k}{\ell} \((1-e^{-\alpha}) \( \frac{\partial}{\partial \alpha} + e^{X+\alpha} - e^{\alpha} \)\)^k
\( \frac{\partial}{\partial \alpha} + e^{X+\alpha}\) G_{k-\ell}(X, \alpha)$$
where  a $e^\alpha$ has been commuted through.
Then
\begin{equation}
R G_{k}(X,\alpha) =  \sum_{\ell >0} \binom{k}{\ell}  \( T-TS^{-1} - S\)^\ell T G_{k-\ell}.
\end{equation}
Since $G_k(0,\alpha)=0$ for $k>0$,
\begin{equation}\label{GrecurseEqn}
G_k(X,\alpha) = \int_{0}^X \sum_{\ell >0} \binom{k}{\ell}  \( T-TS^{-1} - S\)^\ell T G_{k-\ell}(t,X+\alpha-t)dt.
\end{equation}
From this  $$G_{k}(X, \alpha) = \sum_{a, b, c} C^k_{a,b,c}S^a T^b X^c 1,$$
for some constants $C^k_{a,b,c}.$
\end{proof}

The next lemma evaluates the terms in the summation of
Lemma \ref{lem:GF_lem2}, thus yielding a generating function for $G_k(X,Y)$ which
resembles that for the Bell numbers.
\begin{lemma}\label{lem:GF_lem3}
$$ \( T^\ell 1\)\big |_{\alpha =0} \exp\( e^X - 1\) = \sum_{n\ge 0} B_{n+\ell} \frac{X^n}{n!}.$$
\end{lemma}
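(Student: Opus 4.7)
The plan is to recognize $T$ as a conjugate of $\partial_\alpha$ by the exponential $\exp(e^{X+\alpha})$, which collapses $T^\ell$ into a clean closed form. First I would verify the operator identity
\[
T \;=\; e^{-e^{X+\alpha}}\,\frac{\partial}{\partial \alpha}\,e^{e^{X+\alpha}}
\]
by computing both sides on an arbitrary function: since $\partial_\alpha(e^{e^{X+\alpha}} g) = e^{X+\alpha} e^{e^{X+\alpha}} g + e^{e^{X+\alpha}} \partial_\alpha g$, conjugating by $e^{-e^{X+\alpha}}$ yields exactly $\partial_\alpha g + e^{X+\alpha} g = Tg$. Iterating this telescopes to
\[
T^\ell \;=\; e^{-e^{X+\alpha}}\,\frac{\partial^\ell}{\partial \alpha^\ell}\,e^{e^{X+\alpha}},
\]
so that $T^\ell(1) = e^{-e^{X+\alpha}}\,\partial_\alpha^\ell\, e^{e^{X+\alpha}}$.

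Next I would exploit the fact that $e^{e^{X+\alpha}}$ depends on $X$ and $\alpha$ only through the sum $X+\alpha$. Therefore $\partial_\alpha e^{e^{X+\alpha}} = \partial_X e^{e^{X+\alpha}}$, and by induction $\partial_\alpha^\ell e^{e^{X+\alpha}}\big|_{\alpha=0} = \partial_X^\ell e^{e^X}$. Evaluating at $\alpha=0$ thus gives
\[
\bigl(T^\ell 1\bigr)\big|_{\alpha=0} \;=\; e^{-e^X}\,\frac{d^\ell}{dX^\ell}\, e^{e^X}.
\]
Multiplying by $\exp(e^X - 1)$ produces the factor $e^{-e^X}\cdot e^{e^X-1} = e^{-1}$, so
\[
\bigl(T^\ell 1\bigr)\big|_{\alpha=0}\cdot \exp(e^X-1) \;=\; e^{-1}\,\frac{d^\ell}{dX^\ell}\, e^{e^X} \;=\; \frac{d^\ell}{dX^\ell}\exp(e^X-1),
\]
using $\exp(e^X-1) = e^{-1}e^{e^X}$ in the last step.

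Finally, I would invoke the classical exponential generating function $\exp(e^X-1) = \sum_{n\ge 0} B_n \tfrac{X^n}{n!}$ and differentiate $\ell$ times in $X$ to obtain $\sum_{n\ge 0} B_{n+\ell}\tfrac{X^n}{n!}$, completing the identification. The only nontrivial step is the conjugation identity for $T$; everything else is bookkeeping with the chain rule and the Bell-number generating function. I do not anticipate any real obstacle, since the conjugation trick is essentially an integrating-factor computation for the first-order operator $\partial_\alpha + e^{X+\alpha}$.
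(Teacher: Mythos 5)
Your proof is correct and rests on the same two ingredients as the paper's: the integrating-factor conjugation that turns $\partial_\alpha + e^{X+\alpha}$ into a pure derivative, and the observation that the relevant function depends on $X$ and $\alpha$ only through $X+\alpha$, so $\partial_\alpha$ may be traded for $\partial_X$ before setting $\alpha=0$. The only organizational difference is that you perform the conjugation explicitly in the $\alpha$ variable at the outset, which replaces the paper's short induction (that $T^\ell 1$ is a polynomial in $e^{X+\alpha}$) followed by an implicit conjugation in $X$; the substance is the same.
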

\begin{proof}
It is easy to see by induction on $\ell$ that $T^\ell 1$ is a polynomial in $e^{X+\alpha}$.
Thus $$T^\ell 1 = \left( \frac{\partial}{\partial X} + e^{X+\alpha}\right)^\ell 1.$$ Hence
$$T^\ell 1\big|_{\alpha=0} = \left( \frac{\partial}{\partial X} + e^{X}\right)^\ell 1.$$ From this, it is easy to see that
$$T^\ell 1\big|_{\alpha=0}\exp\( e^X - 1\) = \frac{\partial^\ell}{\partial X^\ell}\exp\( e^X - 1\).$$ And the result follows.
\end{proof}

Lemmas \ref{lem:GF_lem2} and \ref{lem:GF_lem3} readily yield
the following expression for the moments of the dimension exponent as a shifted
Bell polynomial.
\begin{lemma} For each $k\ge 0$ and $n\ge 0$
$$
M(d^k;n) = \sum_{a,b,c} C^k_{a,b,c} n(n-1)\cdots (n-c+1)B_{n+b-c}.
$$
\end{lemma}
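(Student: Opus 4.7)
The plan is to assemble the lemma as a routine consequence of Lemmas \ref{lem:GF_lem2} and \ref{lem:GF_lem3} by extracting the coefficient of $X^n/n!$ from the generating function $F_k(X,0)$. Recall that by construction $F_k(X,0)=\sum_{n\ge 0}M(d^k;n)\,X^n/n!$, and that the substitution $G_k(X,Y):=F_k(X,Y)\exp(-(1+Y)(e^X-1))$ yields, upon specializing $Y=0$ (equivalently $\alpha=0$), the relation
\[
F_k(X,0) \;=\; G_k(X,0)\,\exp(e^X-1).
\]
So all the work reduces to controlling $G_k(X,\alpha)|_{\alpha=0}$ and multiplying by the Bell generating function.

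Next I would plug in the form provided by Lemma \ref{lem:GF_lem2}, namely $G_k(X,\alpha)=\sum_{a,b,c}C^k_{a,b,c}\,S^aT^bX^c1$. Two simplifications occur at $\alpha=0$: first, $S^a = e^{a\alpha}$ collapses to $1$; second, $T=\partial_\alpha+e^{X+\alpha}$ commutes past $X^c$ (which depends only on $X$), so $T^bX^c1=X^cT^b1$. Consequently,
\[
G_k(X,0)\big|_{\alpha=0} \;=\; \sum_{a,b,c}C^k_{a,b,c}\,X^c\,(T^b1)\big|_{\alpha=0}.
\]
Multiplying by $\exp(e^X-1)$ and invoking Lemma \ref{lem:GF_lem3}, which identifies $(T^b1)|_{\alpha=0}\exp(e^X-1)=\sum_{m\ge 0}B_{m+b}\,X^m/m!$, gives
\[
F_k(X,0) \;=\; \sum_{a,b,c}C^k_{a,b,c}\,X^c\sum_{m\ge 0}B_{m+b}\frac{X^m}{m!}.
\]

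Finally I would read off the coefficient of $X^n/n!$. Since $X^c\cdot X^m/m!$ contributes to $X^n$ only when $m=n-c$, the coefficient of $X^n/n!$ equals $n!/(n-c)!=n(n-1)\cdots(n-c+1)$ times $B_{n-c+b}$, yielding
\[
M(d^k;n) \;=\; \sum_{a,b,c}C^k_{a,b,c}\,n(n-1)\cdots(n-c+1)\,B_{n+b-c},
\]
as claimed. There is no real obstacle here; the only point that needs a moment of care is verifying that $T$ genuinely commutes past $X^c$ (so that we may factor $X^c$ out before applying Lemma \ref{lem:GF_lem3}) and that the convention $n(n-1)\cdots(n-c+1)=0$ when $c>n$ correctly absorbs any terms where the resulting Bell index would be out of range.
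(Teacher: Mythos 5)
Your proposal is correct and is exactly the argument the paper intends: the paper states this lemma as an immediate consequence of Lemmas \ref{lem:GF_lem2} and \ref{lem:GF_lem3}, and your coefficient extraction (setting $\alpha=0$ so $S^a$ becomes $1$, commuting $T^b$ past $X^c$, multiplying by $\exp(e^X-1)$, and reading off the coefficient of $X^n/n!$) fills in precisely the omitted routine computation. The observation that the falling factorial vanishes when $c>n$ is a sensible extra check, and nothing in your write-up deviates from the paper's route.
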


Theorem \ref{thm:structure} needs  some further constraints on the degrees of terms in this polynomial.
The following lemma yields the claimed bounds for the degrees.
\begin{lemma}
In the notation above, $C^k_{a,b,c}=0$ unless all of the following hold:
\begin{enumerate}
\item $c \leq b$.
\item $c < b$ unless $a=0$.
\item $b\leq 2k$.
\item $3c-b \leq k.$
\item $3c-b\leq k-2$ if $a\neq 0$.
\end{enumerate}
\end{lemma}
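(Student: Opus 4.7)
The plan is to prove all five constraints by induction on $k$ using the recursion
$$G_k(X, \alpha) = \int_0^X \sum_{\ell > 0} \binom{k}{\ell} A^\ell T \, G_{k-\ell}(t, X+\alpha - t)\,dt, \qquad A := T - TS^{-1} - S,$$
from Lemma \ref{lem:GF_lem2}; the base case $G_0 = 1 = S^0 T^0 X^0 \cdot 1$ trivially satisfies all five constraints. For the inductive step I would track a monomial $S^a T^b X^c \cdot 1$ of $G_{k - \ell}$ through three stages: (i) applying $A^\ell T$, (ii) substituting $(X, \alpha) \mapsto (t, X+\alpha-t)$, and (iii) integrating $\int_0^X dt$.

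For stage (i), the commutation $TS^a = S^a T + a S^a$ gives $T(S^a T^b X^c \cdot 1) = S^a T^{b+1} X^c + a\, S^a T^b X^c$, $TS^{-1}(S^a T^b X^c \cdot 1) = S^{a-1} T^{b+1} X^c + (a-1)\, S^{a-1} T^b X^c$, and $S(S^a T^b X^c \cdot 1) = S^{a+1} T^b X^c$, so each operator shifts $(a, b, c)$ by $(\Delta a, \Delta b, 0) \in \{(0,0),(0,1),(-1,0),(-1,1),(+1,0)\}$. For stage (ii), since $u := e^{X+\alpha}$ is invariant under the substitution while $v := e^\alpha \mapsto u e^{-t}$, the monomial $X^{c'} v^{a'} p_{b'}(u)$ (with $p_m := T^m \cdot 1$ the $m$th Touchard polynomial) becomes $t^{c'} e^{-a't} u^{a'} p_{b'}(u)$. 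For stage (iii), if $a' = 0$ then $\int_0^X t^{c'}\,dt = X^{c'+1}/(c'+1)$ produces the single new monomial $(0, b', c'+1)$; if $a' \neq 0$ the identity
$$\int_0^X t^{c'} e^{-a't}\,dt = \frac{c'!}{{a'}^{c'+1}}\left(1 - e^{-a'X} \sum_{j=0}^{c'} \frac{(a'X)^j}{j!}\right)$$
combined with the crucial identity $e^{-a'X} u^{a'} = v^{a'}$ splits the contribution into Term~A $= (c'!/{a'}^{c'+1})\, u^{a'} p_{b'}(u)$ (independent of $X$) and Term~B $= -\sum_{j=0}^{c'} \bigl(c'!/(j!\,{a'}^{c'+1-j})\bigr)\, S^{a'} T^{b'} X^j \cdot 1$. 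Term~B already lies in the monomial basis, yielding monomials $(a', b', j)$ for $0 \leq j \leq c'$; Term~A, when $a' \geq 0$, expands in the Touchard basis $\{p_r\}$ into monomials $(0, r, 0)$ with $r \leq a'+b'$.

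I would reformulate the five constraints as non-negativity of three weights $P_1 := b - c - \eta$, $P_2 := k - (3c - b) - 2\eta$, $P_3 := 2k - b$, with $\eta := \mathbf{1}[a \neq 0]$, which exactly capture conditions (1)--(5). For each monomial produced by stages (i)--(iii) I verify $P_i \geq 0$ at level $k$ using the inductive bound at level $k - \ell$, which supplies a budget of $+\ell$ for $P_2$ and $+2\ell$ for $P_3$. The case analysis is routine for most branches: operator applications change $P_1$ and $P_3$ by at most $\pm 1$ and change $P_2$ by at most $-2$ (the $-2$ occurring only when $S$ pushes $a$ across zero), while integration with $a' = 0$ costs $3$ in $P_2$ and $1$ in $P_1$ from $c \mapsto c+1$, and integration with $a' \neq 0$ contributes only the direct bookkeeping of Term~B.

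The main obstacle I anticipate is Term~A when $a' < 0$: the naive Touchard expansion of $u^{a'} p_{b'}(u)$ formally contains negative powers of $u$ that do not belong to the monomial basis $\{S^a T^b X^c \cdot 1\}$. The resolution must be that the various Stage-(i) branches feeding a common $a'$ combine so that $\sum_{\text{branches}}(\text{coeff})\, p_{b'}(u)$ is divisible by $u^{|a'|}$, leaving a genuine polynomial---a cancellation reflecting the underlying invariance that $G_k$ is polynomial in $u$ of degree at most $2k$. Establishing this cancellation via a Touchard-polynomial identity on the Stage-(i) coefficients and then verifying that the resulting $(0, r, 0)$ monomials have $r \leq 2k$ (so that $P_3 \geq 0$) is where the delicate accounting lives; constraints (1), (2), (4), (5) follow by analogous case analysis once the $u$-degree cancellation is in hand.
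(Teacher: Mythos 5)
You follow essentially the paper's route: the recursion $G_k(X,\alpha)=\int_0^X\sum_{\ell>0}\binom{k}{\ell}(T-TS^{-1}-S)^{\ell}T\,G_{k-\ell}(t,X+\alpha-t)\,dt$, induction on $k$, and monomial bookkeeping for $S^aT^bX^c1$ through the operator stage, the substitution, and the integration; your explicit evaluation of $\int_0^X t^{c'}e^{-a't}\,dt$ is in fact sharper than the paper's one-line description of that integral. But the argument is not complete: it stops exactly where the content of the lemma lies. You flag Term A, $\frac{c'!}{(a')^{c'+1}}u^{a'}p_{b'}(u)$, as the main obstacle and only assert that the needed cancellation ``must'' occur. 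Concretely: (i) for $a'\le -2$ (such terms genuinely occur, e.g. $S^{-2}T^4X^0 1$ appears in the integrand defining $G_2$, coming from the $S^{-1}T^2 1$ term of $G_1$) the integral of a single monomial is not in the span of the $S^aT^bX^c1$ at all, and the divisibility of the combined coefficient by $u^{|a'|}$ is nowhere established; (ii) for $a'>0$ the Touchard expansion of $u^{a'}p_{b'}(u)$ produces $H_{0,r,0}$ with $r$ as large as $a'+b'$, so property (3), i.e. your $P_3=2k-b\ge 0$, does not follow from the three weights you track: one needs an extra inductive control on $a$ (for instance $\max(a,0)+b\le 2k$), which appears neither in the lemma nor in your weight system. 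Both points are acknowledged but not supplied, so this is a genuine gap rather than a finished proof.

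A second, smaller issue: the part you call routine conceals the one observation that makes the $P_2$ accounting close. An application of $S$ (or of $TS^{-1}$ to a monomial with $a=0$) switches $\eta$ from $0$ to $1$ and costs $2$ in $P_2$, while the level change supplies only $+\ell$; for $\ell=1$ the books balance only because $T$ applied to an $a=0$ monomial raises $b$ by exactly one (the commutator term $aS^aT^bX^c$ vanishes), which is precisely why the paper's proof of properties (4)--(5) splits into the cases $\ell=1$ and $\ell>1$. For comparison, the paper sidesteps your Term-A analysis by asserting that for $a\neq0$ the integral of $H_{a,b,c}$ is a combination of $H_{a,b,c'}$, $c'\le c$, and $H_{0,b',0}$, $b'\le b$; your own computation shows that assertion needs repair (e.g. $\int_0^X H_{1,1,0}(t,X+\alpha-t)\,dt=H_{0,2,0}-H_{0,1,0}-H_{1,1,0}$ has $b'=2>b=1$), so closing your argument honestly requires strengthening the inductive statement, not merely ``delicate accounting'' deferred to later.
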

\begin{proof}
Let $H_{a,b,c}(X,\alpha) = S^a T^b X^c 1.$ Using Equation \eqref{GrecurseEqn},
write $C^k_{a,b,c}$ in terms of the $C^\ell_{a,b,c}$ for $\ell<k$. To do this requires understanding
$$
\int_0^X H_{a,b,c}(t,X+\alpha-t)dt.
$$
As a first claim:
 if $a=0$, then the above is simply $\frac{1}{c+1}H_{0,b,c+1}$. This is seen easily from the fact that $R$ commutes with $T$. For $a\neq 0$, it is easy to see that this is a linear combination of the $H_{a,b,c'}$ over $c'\leq c$, and of $H_{0,b',0}$ over $b'\leq b$.

The desired properties can now be proved by
induction on $k$. It is clear that they all hold for $k=0$. For larger $k$,
assume that they hold for all $k-\ell$, and use Equation \ref{GrecurseEqn} to prove them for $k$.

By the inductive hypothesis, the $TG_{k-\ell}$ are linear combinations of $H_{a,b,c}$ with $c< b$. Thus $\( T-TS^{-1} - S\)^\ell T G_{k-\ell}$ is a linear combination of $H_{a,b,c}$'s with $b>c$. Thus, by Equation \eqref{GrecurseEqn}, $G_k$ is a linear combination of $H_{a,b,c}$'s with $c \leq b$ and $a=0$ or with $c<b$. This proves properties 1 and 2.

By the inductive hypothesis the $G_{k-\ell}$ are linear combinations of $H_{a,b,c}$ with $b\leq 2(k-\ell)$. Thus $\( T-TS^{-1} - S \)^\ell T G_{k-\ell}$ is a linear combination of $H_{a,b,c}$'s with $b\leq 2k+1-\ell \leq 2k$. Thus, by Equation \eqref{GrecurseEqn}, $G_k$ is a linear combination of $H_{a,b,c}$'s with $b\leq 2k$. This proves property 3.

Finally, consider the contribution to $G_k$ coming from each of the $G_{k-\ell}$ terms. For $\ell=1$, $G_{k-\ell}$ is a linear combination of $H_{a,b,c}$'s with $3c-b\leq k-3$ if $a\neq 0$, $3c-b\leq k-1$ if $a=0$. Thus $TG_{k-\ell}$ is a linear combination of $H_{a,b,c}$'s with $3c-b\leq k-3$ if $a\neq 0$, and $3c-b\leq k-2$ otherwise. Thus, $\( T-TS^{-1} - S \)^\ell T G_{k-\ell}$ is a linear combination of $H_{a,b,c}$'s with $3c-b\leq k-3$ if $a=0$, and $3c-b\leq k-2$ otherwise. Thus the contribution from these terms to $G_k$ is a linear combination of $H_{a,b,c}$'s with $3c-b\leq k$ and $3c-b\leq k-2$ if $a\neq 0$. For the terms with $\ell>1$, $G_{k-\ell}$ is a linear combination of $H_{a,b,c}$'s with $3c-b\leq k-2$ and $3c-b\leq k-4$ when $a\neq 0$. Thus, $TG_{k-\ell}$ is a linear combination of $H_{a,b,c}$'s with $3c-b\leq k-3$, as is $\( T-TS^{-1} - S \)^\ell T G_{k-\ell}$. Thus, the contribution of these terms to $G_k$ is a linear combination of $H_{a,b,c}$'s with $3c-b\leq k$ and $3c-b\leq k-3$ if $a\neq 0$. This proves properties 4 and 5.

This completes the induction and proves the Lemma.
\end{proof}

From this Lemma, it is easy to see that
$$
M(d^k;n) = \sum_{\ell=0}^{2k} B_{n+\ell}P_{k,\ell}(n)
$$
for some polynomials $P_{k,\ell}(n)$ with $\deg(P_{k,\ell}) \leq \min(2k-\ell,k/2+\ell/2)$.

\subsection{Asymptotic Analysis}\label{sec:Asymptotic}
This section presents some asymptotic analysis of the Bell numbers and ratios of Bell numbers.
These results yield Theorems \ref{thm:asymptotics} and \ref{thm:asymptoticsInter}.  Similar analysis
can be found in \cite{knuth4a}.
\begin{proposition}\label{prop:Bn+k}
Let $\alpha_n$ be the solution to
$$ue^u = n+1$$
and let
$$\zeta_{n,k} := e^{\alpha_n} \( 1+ \frac{1}{\alpha_n}\) +\frac{k}{\alpha_n^2} = \frac{(n+1)(\alpha_n+1) + k}{\alpha_n^2}.$$
Then
$$B_{n+k} = \frac{(n+k)!}{\sqrt{2\pi} e} \zeta_{n,k}^{-\frac{1}{2}}  \exp\( e^{\alpha_n} - (n+k + 1) \log(\alpha_n)\) \(1 + O\( e^{-\alpha_n}\)\).$$
More precisely, for $T \ge 0$
\begin{align*}
B_{n+k} =  \frac{(n+k)!}{\sqrt{2\pi} e} &  \zeta_{n,k}^{-\frac{1}{2}}  \exp\( e^{\alpha_n}
- (n+k + 1) \log(\alpha_n)\) \\
& \times \( 1+ \sum_{m=1}^T R_{m,k}(\alpha_n) \frac{1}{n^{m}}
 + O\( \( \frac{\alpha_n}{n} \)^{T+1} \) \).
 \end{align*}
where $R_{m,k}$ are rational functions.  In particular
{\small
\begin{align*}
R_{1, k}(u) = &\frac{\(	
(-12k^2 + 24k - 2) + (-24k^2 + 24k + 18)u + (-12k^2 - 12k + 20)u^2 + (-12k + 3)u^3 -2u^4 \)}{24(u+1)^3}   \\
R_{2,k}(u) = & \frac{(144k^4 - 384k^3 + 624k^2 - 1152k + 100) + (576k^4 - 576k^3 + 816k^2 - 3264k - 648)u}{1152(u+1)^6}   \\
&  \hspace{.5in} + \frac{(864k^4 + 1056k^3 + 432k^2 - 6384k - 1292)u^2}{1152(u+1)^6}  \\
& \hspace{.5in}   + \frac{(576k^4 + 2784k^3 + 2280k^2 - 7440k - 2604)u^3 }{
         1152(u+1)^6}  \\
&\hspace{.5in}  + \frac{(144k^4 + 2016k^3 + 3888k^2 - 3552k - 2988)u^4 +
(480k^3 + 2328k^2 + 72k - 1800)u^5 }{1152(u+1)^6}   \\
&\hspace{.5in}  + \frac{(480k^2 + 600k - 551)u^6 + (144k - 60)u^7 + 4u^8 }{1152(u+1)^6}
\end{align*}
}
\end{proposition}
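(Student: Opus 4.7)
The approach is the saddle-point method applied to the exponential generating function of the Bell numbers. From $\sum_m B_m z^m/m! = \exp(e^z - 1)$ and Cauchy's formula,
\begin{equation*}
B_{n+k} = \frac{(n+k)!}{2\pi i} \oint_{|z|=\alpha_n} \frac{e^{e^z-1}}{z^{n+k+1}}\, dz.
\end{equation*}
I would use the fixed circle $|z| = \alpha_n$ (which is the exact saddle of the integrand only at $k = 0$, but lies close to the true saddle $\beta$ satisfying $\beta e^\beta = n+k+1$ for bounded $k$). Pinning the contour at $\alpha_n$ rather than at $\beta$ is what forces the answer to come out in the proposition's variables, rather than in terms of $\beta$ which would then need to be expanded back.

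Parametrize $z = \alpha_n e^{i\theta}$ to reduce the problem to $B_{n+k} = \frac{(n+k)!\,\alpha_n^{-(n+k)}}{2\pi e}\int_{-\pi}^{\pi} e^{H(\theta)}\, d\theta$ with $H(\theta) := e^{\alpha_n e^{i\theta}} - i(n+k)\theta$. Direct differentiation gives $H(0) = e^{\alpha_n}$, $H'(0) = i(\alpha_n e^{\alpha_n} - (n+k)) = i(1-k)$, $H''(0) = -\alpha_n e^{\alpha_n}(\alpha_n+1) = -(n+1)(\alpha_n+1)$; and inductively $H^{(j)}(0) = e^{\alpha_n}\cdot i\alpha_n \cdot p_j(i\alpha_n)$ for $j \ge 2$ with $p_j \in \Z[x]$ of degree $j-1$, so the $k$-dependence of $H$ is confined to its linear coefficient. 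Split the integral at $|\theta| = \theta_0 := n^{-1/2+\varepsilon}$. The tail $|\theta| > \theta_0$ is super-exponentially small: $\mathrm{Re}\,e^{\alpha_n e^{i\theta}} \le e^{\alpha_n \cos\theta} \le e^{\alpha_n} e^{-\alpha_n \theta_0^2/2}$ gives integrand bounded by $e^{e^{\alpha_n}} e^{-(n+1)\theta_0^2/2} \le e^{e^{\alpha_n}} e^{-n^{2\varepsilon}/2}$.

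On the main range write
\begin{equation*}
e^{H(\theta)} = e^{e^{\alpha_n}}\, e^{i(1-k)\theta - \frac{1}{2}(n+1)(\alpha_n+1)\theta^2}\, \exp\!\Big(\sum_{j\ge 3}\frac{H^{(j)}(0)}{j!}\theta^j\Big),
\end{equation*}
expand the last factor as a power series in $\theta$, and extend integration back to $\R$ (with another super-exponentially small correction). Each resulting Gaussian moment $\int_\R \theta^m e^{i(1-k)\theta - A\theta^2/2}\, d\theta$ with $A = (n+1)(\alpha_n+1)$ is computed by completing the square. Collecting contributions by powers of $1/n$, and repeatedly using $\alpha_n e^{\alpha_n} = n+1$ to simplify, produces the advertised shape. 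The precise prefactor $\zeta_{n,k}^{-1/2}$ arises after bundling together the bare Gaussian factor $\sqrt{2\pi/A}$, the completion-of-square factor $e^{-(1-k)^2/(2A)}$, and the rewriting $1/\sqrt{A} = \alpha_n/\sqrt{\alpha_n^2 A}$, modulo $O(1/n)$ corrections that are absorbed into the $R_{m,k}$.

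The main obstacle is purely computational: to extract the explicit rational functions $R_{1,k}$ and $R_{2,k}$ one must compute $H^{(j)}(0)$ through $j = 6$, enumerate all combinations of Taylor coefficients and odd-moment couplings through $(1-k)$ contributing at each order $1/n^m$ for $m\le 2$, and simplify the result into a rational function of $\alpha_n$ with denominator $(\alpha_n+1)^{3m}$. This bookkeeping is long but mechanical and readily checked by computer algebra; the structural properties of $p_j$ ensure the denominator pattern.
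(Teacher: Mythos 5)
Your proposal is correct and takes essentially the same route as the paper: a saddle-point evaluation of the Cauchy integral for $\sum_m B_m z^m/m!$ at the $k$-independent point $\alpha_n$ (rather than the true saddle for $B_{n+k}$), with a super-exponentially small tail bound, Taylor expansion at the saddle, and termwise Gaussian moments. The only difference is cosmetic: you use the circle $|z|=\alpha_n$, so the quadratic coefficient is the $k$-free quantity $(n+1)(\alpha_n+1)$ and the stated prefactor $\zeta_{n,k}^{-1/2}$ must be reconstituted by expanding $\bigl(1+k/((n+1)(\alpha_n+1))\bigr)^{-1/2}$ into the $R_{m,k}$, whereas the paper integrates over de Bruijn's vertical line through $\alpha_n$, where $\alpha_n^{2}\zeta_{n,k}$ appears directly as the quadratic coefficient and the explicit $R_{1,k},R_{2,k}$ are likewise left to the same mechanical bookkeeping you defer to computer algebra.
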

\begin{proof}
The proof is very similar to the traditional saddle-point method for approximating $B_{n}$.
The idea is to evaluate at the saddle point for $B_n$ rather than for $B_{n+k}$.
We follow the proof in Chapter 6 of \cite{deBruijn}.

By Cauchy's formula,
$$\frac{2\pi i e}{(n+k)!} B_{n+k} = \int_C \exp(e^z) z^{-n-k-1} dz$$
where $C$ encircles the origin once in the positive direction.
Deform the path to a vertical line
$u-i\infty$ to $u+i\infty$ by taking a large segment of this line and a large semi-circle going around the origin.  As the
radius, say $R$, is taken to infinity the factor $z^{-n-k-1} = O(R^{-n-k-1})$ and $\exp(e^z)$ is bounded in the half-plane.

Choose $u = \alpha_n$ and then
$$\frac{2\pi e}{(n+k)!} B_{n+k}
   = \exp\( e^{\alpha_n} - (n+k+1) \log(\alpha_n)\) \int_{-\infty}^\infty \exp\( \psi_{n,k}(y)\) dy$$
where
$$\psi_{n,k}(y) = e^{\alpha_n} \( (e^{iy}-1) - \frac{n+1+k}{e^{\alpha_n}} \log\( 1+ i y \alpha_n^{-1}\)\).$$
The the real part has maxima around $y  = 2\pi m$ for each integer $m$, but using
$\log\( 1+ y^2 \alpha_n^{-2}\) > \frac{1}{2} y^{2} \alpha_n^{-2}$ for $\pi < y < \alpha_n$
and  $1+ y^2 \alpha_n^{-2} > 2 y \alpha_n^{-1}$ for $y >\alpha_n$
 as in \cite{deBruijn} gives
$$\int_{-\infty}^\infty \exp\(\psi_{n,k}(y)\) dy  = \int_{-\pi}^{\pi} \exp\( \psi_{n,k}(y)\) dy
+ O\( \exp\(  - \frac{e^{\alpha_n}}{\alpha_n}\)\).$$
Next, note that
\begin{align*}
\psi_{n,k}(y)
=& -\frac{iky}{\alpha_n} - \( 1+  \frac{n+1+k}{(n+1) \alpha_n}\) \frac{n+1}{\alpha_n} \frac{y^2}{2}
  +  \sum_{m>2}  \( \frac{1}{m!} + (-1)^m \frac{n+1+k}{m\alpha_n^{m-1} (n+1) }\) \frac{n+1}{\alpha_n}  (iy)^m
\end{align*}
where
$\frac{n+1+k}{e^{\alpha_n}} = \alpha_n + k e^{-\alpha_n}$ and $e^{\alpha_n} = \frac{n+1}{\alpha_n}$
were used.
Hence,
\begin{align*}
\psi_{n,k}\(\frac{y}{\sqrt{\zeta_{n,k}}} \)  =& -\frac{ik}{\alpha_n\sqrt{\zeta_{n,k}} } - \frac{y^2}{2}
  +  \sum_{m>2}  \( \frac{1}{m!} + (-1)^m \frac{n+1+k}{m \alpha_n^{m-1} (n+1) }\) \frac{n+1}{\alpha_n}  \(\frac{iy}{\sqrt{\zeta_{n,k} }}\)^m
\end{align*}

Making the change of variables and extending the sum of interval of integration gives
\begin{align*}
\int_{-\infty}^{\infty}& \exp\( \psi_{n,k}(y)\) dy   + O\( \exp\( - \frac{e^{\alpha_n}}{\alpha_n}\)\) \\
&= \int_{-\infty}^\infty e^{-\frac{y^2}{2}} \exp\(  -\frac{ik}{\alpha_n\sqrt{\zeta_{n,k}} }
  +  \sum_{m>2}  \( \frac{1}{m!} + (-1)^m \frac{n+1+k}{\alpha_n^{m-1} (n+1) }\) \frac{n+1}{\alpha_n}
   \(\frac{iy}{\sqrt{\zeta_{n,k} }}\)^m  \) dy .
\end{align*}
Hence,
Taylor expanding around $y = 0$ and
using $$\int_{\R} y^{k} e^{-\frac{y^2}{2}} dy = \begin{cases} 0 & k \equiv 1\pmod{2} \\
\sqrt{2\pi} \frac{k!}{2^{\frac{k}{2}} \( \frac{k}{2}\)!} & k \equiv 0 \pmod{2} \end{cases}$$
gives the desired result.
For more details see \cite{deBruijn}.
\end{proof}

Proposition \ref{prop:Bn+k} yields
\begin{equation}\label{eqn:Bell_asymptotic}
\frac{B_{n+k}}{B_{n}} = \frac{(n+k)!}{n!} \alpha_n^{-k}
\( 1 - \frac{k\alpha_n}{(n+1)(\alpha_n+1)}\)^{-\frac{1}{2}} \(1 + O\( e^{-\alpha_n}\)\).
\end{equation}
Direct application of this result gives the results in Theorems
\ref{thm:asymptotics} and \ref{thm:asymptoticsInter}.

\section{Proofs of Theorems \ref{thm:algebraAbstract} and \ref{thm:structureAbstract} }\label{sec:Proofs1}
This section gives the proofs of Theorems \ref{thm:structureAbstract} and \ref{thm:algebraAbstract}.
This result
implies Theorem \ref{thm:structureInter}. A pair of lemmas which will
be useful in the proof of Theorem \ref{thm:structureAbstract}:

\begin{lemma}\label{lem:bell_sums}
For $B_n$ the Bell numbers, define
$$
g_{r,d,k,s}(n) := n^d \sum_{i=0}^{n-k} \binom{n-k}{i} B_{i+s}r^{n-k-i}
$$
where $r,d,k,s$ are non-negative integers.
Then $g_{r, d, k, s}(n)$ is a shifted Bell polynomial of lower shift index $-k$ and upper shift index
$r+s -k$.
\end{lemma}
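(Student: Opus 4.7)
My plan is as follows. Since the factor $n^d$ only contributes a polynomial coefficient of degree $d$ (which is allowed in a shifted Bell polynomial), and since the shift $k$ just reindexes, the content of the lemma reduces to showing that
\[
h_m := \sum_{i=0}^{m}\binom{m}{i} B_{i+s}\, r^{m-i}
\]
is a fixed linear combination (with constant coefficients) of $B_m, B_{m+1},\ldots, B_{m+r+s}$. Setting $m = n-k$ and multiplying by $n^d$ will then produce a shifted Bell polynomial whose nonzero shift indices lie in $[-k,\,r+s-k]$, as desired.

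To establish the claim about $h_m$, I will work with the exponential generating function $B(t) = e^{e^t-1}$. A short induction on $s$ (or the standard Dobinski-type computation) gives
\[
B^{(s)}(t) \;=\; T_s(e^t)\, e^{e^t-1},
\]
where $T_s(x) = \sum_{h=0}^{s} S(s,h)\, x^h$ is the classical Touchard (exponential) polynomial of degree $s$. Therefore the EGF of $h_m$ is
\[
H(t) \;=\; \sum_{m\geq 0} h_m \frac{t^m}{m!} \;=\; e^{rt} B^{(s)}(t) \;=\; \sum_{h=0}^{s} S(s,h)\, e^{(r+h)t}\, e^{e^t-1}.
\]

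The remaining step is to re-express each exponential factor $e^{(r+h)t}$ in the Touchard basis. Because $T_j(x) = x^j + (\text{lower order in } x)$, the polynomials $T_0,T_1,\ldots,T_c$ form a triangular, monic basis for polynomials of degree $\leq c$, so there exist integer constants $\gamma^{(c)}_j$ (obtained by inverting the lower-triangular Stirling matrix of the second kind; they are, up to sign, Stirling numbers of the first kind) with $x^c = \sum_{j=0}^{c}\gamma^{(c)}_j T_j(x)$. Setting $x = e^t$ and multiplying by $e^{e^t-1}$ yields
\[
e^{ct}\, e^{e^t-1} \;=\; \sum_{j=0}^{c} \gamma^{(c)}_j\, B^{(j)}(t),
\]
whose $m$-th EGF coefficient is $\sum_{j=0}^c \gamma^{(c)}_j B_{m+j}$. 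Applying this with $c = r+h$ and feeding back into the expression for $H(t)$ gives
\[
h_m \;=\; \sum_{h=0}^{s}\sum_{j=0}^{r+h} S(s,h)\, \gamma^{(r+h)}_j\, B_{m+j},
\]
in which the indices of $B$ range precisely over $\{0,1,\ldots,r+s\}$. Substituting $m = n-k$ and reattaching $n^d$ completes the proof with upper shift $r+s-k$ and lower shift $-k$.

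There is no serious obstacle: the entire argument is EGF bookkeeping once the derivative identity $B^{(s)}(t) = T_s(e^t) B(t)$ is in hand, and the final inversion is immediate from the triangular structure of the Touchard basis. A purely combinatorial route using Spivey's formula $B_{i+s} = \sum_{h,j} S(s,h)\binom{i}{j} h^{i-j} B_j$ to rewrite the $B_{i+s}$ directly and then applying the binomial theorem in the $i$ variable gives the same decomposition, but the generating-function presentation is cleaner.
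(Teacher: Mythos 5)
Your proof is correct and follows essentially the same route as the paper's: reduce to the case $d=k=0$, compute the exponential generating function $e^{rt}\frac{d^s}{dt^s}e^{e^t-1}$, and observe that any polynomial in $e^t$ times $e^{e^t-1}$ lies in the span of the generating functions of the sequences $B_{n+j}$ by triangularity of the Stirling matrix. The only difference is that you make the inversion explicit via Touchard polynomials and (signed) Stirling numbers of the first kind, where the paper argues by a spanning/triangularity remark; the substance is identical.
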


\begin{proof}
It clearly suffices to prove that $g_{r,0,k,s}(n)$ is a shifted Bell polynomial.
Since $g_{r, 0, 0, s}(n-k) = g_{r, 0, k,s}(n)$,
it suffices to prove that $g_{r,s}(n):=g_{r,0,0,s}(n)$ is a shifted Bell polynomial.

For this consider the exponential generating function
\begin{align*}
\sum_{n=0}^\infty g_{r,s}(n)\frac{x^n}{n!} = & \sum_{n=0}^\infty \sum_{i=0}^n \binom{n}{i} B_{i+s} r^{n-i} \frac{x^n}{n!} =
\sum_{a=0}^\infty \sum_{b=0}^\infty B_{a+s} r^b \frac{x^{a+b}}{a!b!} \\
 =& \frac{\partial^s}{\partial x^s}\left(\sum_{n=0}^\infty B_n \frac{x^n}{n!} \right)e^{rx} = e^{rx}
 \frac{\partial^s}{\partial x^s}\left(e^{e^x-1}\right).
\end{align*}
This is easily seen to be equal to $e^{e^x-1}$ times a polynomial in $e^x$.
On the other hand,  the exponential generating function for $g_{0,s}(n)=B_{n+s}$
(with $S(s,a)$ the Stirling number of the second kind) is
$$
\frac{\partial^s}{\partial x^s}\left(e^{e^x-1}\right) = e^{e^x -1} \sum_{a = 0}^s S(s, a) e^{ax} ,
$$
which is $e^{e^x-1}$ times a polynomial in $e^x$ of degree exactly $s$.
From this, conclude that the space of all polynomials in $e^x$ times $e^{e^x-1}$ is spanned by the set of generating functions for the sequences $B_{n+s}$ as $s$ runs over non-negative integers.
In particular, $e^{qx} e^{e^x - 1} = \sum_{n=0}^\infty \sum_{a=0}^q \beta_{q, a} B_{n+a} \frac{x^n}{n!}$
for some rational numbers $\beta_{q, a}$.
Since the generating function for $g_{r,s}(n)$ lies in this span.  Moreover,
$$g_{r, 0, k, s}(n) = \sum_{b=-k}^{r+s-k} \alpha_{r+s, b+k} B_{n+b}$$
for some rational numbers $\alpha_{n,m}$.  This gives the result.
\end{proof}

For a sequence, ${\bf r} = \{r_0, r_1, \cdots, r_k\}$, of rational numbers  and a
polynomial $Q \in \Q[y_1, \cdots, y_k, m]$ define
\begin{equation}
M(k,Q, {\bf r},n,x):=
\sum_{1\leq x_1<x_2<\ldots<x_k\leq n} Q(x_1,\ldots,x_k,n)\prod_{i=0}^k(x+r_i)^{x_{i+1}-x_i-1},
\end{equation}
where $x_0=0,x_{k+1}=n+1.$

\begin{lemma}\label{lem:gf_sum}
Fix $k$, let $Q \in \Z[y_1, \cdots, y_k, m]$ and  ${\bf r} = \{r_0,  r_1, \cdots, r_k\}$
be a sequence of rational numbers.
As defined above,  $M(k,Q,{\bf r},n,x)$ is a rational linear combination of terms of the form
$$
F(n)G(x)(x+r_i)^{n-k},
$$
where $F\in\Q[n],G\in\Q[x]$ are polynomials.
\end{lemma}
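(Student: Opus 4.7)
The plan is to recast $M(k,Q,{\bf r},n,x)$ as a differential operator applied to a complete homogeneous symmetric polynomial, and then decompose via the divided-difference identity. First I would change variables from $(x_1,\ldots,x_k)$ to the gap sequence $a_i := x_{i+1}-x_i-1$ (with $x_0:=0$ and $x_{k+1}:=n+1$), so that $a_0,\ldots,a_k \ge 0$ and $\sum_i a_i = n-k$. Since $x_j = j + \sum_{i<j} a_i$, the factor $Q(x_1,\ldots,x_k,n)$ becomes a polynomial $\tilde Q(a_0,\ldots,a_{k-1};n) \in \Q[a_0,\ldots,a_{k-1},n]$, and with $u_i := x+r_i$ the product factor is $\prod_i u_i^{a_i}$.

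Next, using $a_j u_j^{a_j} = (u_j\,\partial/\partial u_j) u_j^{a_j}$ and treating $u_0,\ldots,u_k$ as formally independent variables, I would obtain
\[
M(k,Q,{\bf r},n,x) \;=\; \tilde Q\!\left(u_0\tfrac{\partial}{\partial u_0}, \ldots, u_{k-1}\tfrac{\partial}{\partial u_{k-1}};\, n\right)\, h_{n-k}(u_0,\ldots,u_k),
\]
where $h_{n-k}$ denotes the complete homogeneous symmetric polynomial of degree $n-k$ in the $k+1$ variables $u_0,\ldots,u_k$.

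For pairwise distinct $r_i$'s I would invoke the divided-difference (Lagrange-interpolation) identity
\[
h_{n-k}(u_0,\ldots,u_k) \;=\; \sum_{i=0}^k \frac{u_i^n}{\prod_{j\ne i}(u_i - u_j)},
\]
and exploit the crucial fact that under specialization $u_i = x + r_i$ the differences $u_i - u_j = r_i - r_j$ become nonzero rational constants. Applying the Euler operators formally to each summand and then specializing produces a finite $\Q$-linear combination of expressions of the form $F_i(n)\, G_i(x)\, u_i^n$ with $F_i \in \Q[n]$ and $G_i \in \Q[x]$: the derivatives contribute only powers of $n$ from $u_i\partial_{u_i}u_i^n = n u_i^n$ and polynomials in the $u_j$'s from differentiating the rational denominators. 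Writing $u_i^n = (x+r_i)^k (x+r_i)^{n-k}$ then absorbs $(x+r_i)^k$ into $G_i(x)$, yielding the claimed form $F(n)G(x)(x+r_i)^{n-k}$.

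The main obstacle is the case when some $r_i$ coincide, where the divided-difference formula has apparent poles. I would resolve this by either (i) observing that both $M(k,Q,{\bf r},n,x)$ and the conjectured decomposition are polynomial in the $r_i$'s, so the identity obtained for distinct $r_i$'s extends to coincident values by continuity, or (ii) working directly from the $z$-generating function
\[
\sum_{n\ge 0} M(k,1,{\bf r},n,x)\, z^n \;=\; z^k \prod_{i=0}^k \frac{1}{1-u_i z},
\]
performing partial fractions in $z$: each distinct value of $u_i$ with multiplicity $m$ contributes terms $(1-u_i z)^{-\ell}$ for $1 \le \ell \le m$, whose coefficient of $z^{n-k}$ is $\binom{n-k+\ell-1}{\ell-1}$ times a polynomial in $x$ times $(x+r_i)^{n-k}$, still of the required form. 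The differential operator arising from $\tilde Q$ preserves this structure, merely introducing additional polynomial factors in $n$ and $x$.
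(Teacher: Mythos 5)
Your route is genuinely different from the paper's and is essentially viable. The paper proves the lemma by induction on $k$: it freezes $x_1,\ldots,x_{k-1}$, performs the innermost sum over $x_k$, and splits into the cases $r_{k-1}=r_k$ (where the two geometric factors merge) and $r_{k-1}\neq r_k$ (where multiplying by $(y-z)^{d+1}$, with $y=x+r_{k-1}$, $z=x+r_k$, turns the polynomial-weighted geometric sum into terms in which one of the two exponents is large), reducing to $M(k-1,\cdot)$. You instead pass to the gap variables $a_i=x_{i+1}-x_i-1$, realize the polynomial weight by Euler operators $u_i\partial_{u_i}$ acting on the complete homogeneous symmetric polynomial $h_{n-k}(u_0,\ldots,u_k)$ with $u_i=x+r_i$, and then either expand $h_{n-k}$ by the Lagrange/bialternant identity (distinct $r_i$) or extract coefficients from $z^k\prod_i(1-u_iz)^{-1}$ by partial fractions. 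Your method yields an explicit closed form and makes transparent why the exponent $n-k$ and the denominators $(r_i-r_j)$ appear, at the cost of heavier formal machinery; the paper's induction is more elementary and handles equal $r_i$'s with no extra case analysis beyond the $r_{k-1}=r_k$ branch.

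Two caveats on your treatment of coincident $r_i$'s. Resolution (i) does not work as stated: the decomposition you obtain for distinct $r_i$'s has coefficients with poles along $r_i=r_j$ (the factors $\prod_{j\neq i}(r_i-r_j)^{-1}$), so the asserted identity is not polynomial in the $r_i$'s, and membership in the span of the functions $F(n)G(x)(x+r_i)^{n-k}$ does not pass to a limit automatically (the limit produces derivative-type terms, and one must check these still have exponent at least $n-k$). Resolution (ii) is the one to use, but the sentence ``the differential operator \ldots merely introduces additional polynomial factors in $n$ and $x$'' hides the real check: each Euler operator adds a numerator factor $u_iz$ together with an extra denominator factor $(1-u_iz)$, so after specialization the partial-fraction coefficients at the value $v=x+r_i$, computed by substituting $z=(1-w)/v$, a priori carry negative powers of $v$, threatening terms $(x+r_i)^{n-k-s}$ with $s>0$, which are not of the lemma's form. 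A short count rescues this: if $m$ is the multiplicity of the value $v$ among the $u_i$, the net power of $v$ in such a coefficient is $v^{1-m}$, and since $m\le k+1$ the resulting exponent is $n-(m-1)\ge n-k$, so the claimed form survives; this verification should be included for your argument to be complete.
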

\begin{proof}
The proof is by induction on $k$. If $k=0$ then definitionally, $M(k,Q,{\bf r},n,x) = Q(n)(x+r_0)^n$,
providing a base case for our result.   Assume that the
lemma holds for $k$ one smaller. For this,  fix the values of $x_1,\ldots,x_{k-1}$ in the
sum and consider the resulting sum over $x_k$. Then
\begin{align*}
M(k,Q,{\bf r},n,x)= & \sum_{1\leq x_1<x_2<\ldots<x_{k-1}\leq n-1} \prod_{i=0}^{k-2}(x+r_i)^{x_{i+1}-x_i-1} \\ & \hspace{.6in} \times  \sum_{x_{k-1}<x_k\leq n} Q(x_1,\ldots,x_k,n)(x+r_{k-1})^{x_k-x_{k-1}-1}(x+r_k)^{n-x_k}.
\end{align*}

\noindent
Consider the inner sum over $x_k$:

If $r_{k-1}=r_k$, then the product of the last two terms is always $(x+r_k)^{n-x_{k-1}-2}$, and thus the sum is some polynomial in $x_1,\ldots,x_{k-1},n$ times $(x+r_k)^{n-x_{k-1}-2}$. The remaining sum over $x_1,\ldots,x_{k-1}$ is exactly of the form $M(k-1,Q',{\bf r'},n-1,x)$, for some polynomial $Q'$, and thus, by the inductive hypothesis, of the correct form.

If $r_{k-1}\neq r_k$ the sum is over pairs of non-negative integers $a={x_k-x_{k-1}-1}$ and $b={n-x_k-1}$ summing to $n-x_{k-1}-2$ of some polynomial, $Q'$ in $a$ and $n$ and the other $x_i$ times $(x+r_{k-1})^a(x+r_k)^b$. Letting $y=(x+r_{k-1})$ and $z=(x+r_k)$, this is a sum of $Q'(x_i,n,a)y^az^b$. Let $d$ be the $a$-degree of $Q'$. Multiplying this sum by $(y-z)^{d+1}$, yields, by standard results, a polynomial in $y$ and $z$ of degree $n-x_{k-1}-2+(d+1)$ in which all terms have either $y$-exponent or $z$-exponent at least $n-x_{k-1}-1$. Thus this inner sum over $x_k$ when multiplied by the non-zero constant $(r_{k-1}-r_k)^{d+1}$ yields the sum of a polynomial in $x,n,x_1,\ldots,x_{k-1}$ times $(x+r_{k-1})^{n-x_{k-1}-2}$ plus another such polynomial times $(x+r_{k-1})^{n-x_{k-1}-2}$. Thus, $M(k,Q,{\bf r},n,x)$ can be written as a linear combination of terms of the form $G(x)M(k-1,Q',{\bf r'},n,x)$. The inductive hypothesis is now enough to complete the proof.
\end{proof}

Turn next to the proof of Theorem \ref{thm:structureAbstract}.
\begin{proof}[Proof of Theorem \ref{thm:structureAbstract}]
It suffices to prove this Theorem for simple statistics. Thus, it suffices to prove that for any pattern $P$ and polynomial $Q$ that
$$
M(f_{P,Q};n) = \sum_{\lambda\in\Pi_n} f_{P,Q}(\lambda) = \sum_{\lambda\in\Pi_n} \sum_{s\in_P \lambda} Q(s)
$$
is given by a shifted Bell polynomial in $n$.
As a first step,  interchange the order of summation over $s$ and $\lambda$ above. Hence
$$
M(f_{P,Q};n) = \sum_{s\in [n]^k} Q(s) \sum_{\substack{ \lambda\in\Pars(n) \\ s\in_P\lambda}} 1.
$$

To deal with the sum over $\lambda$ above, first consider only the blocks of
$\lambda$ that contain some element of $s$.
Equivalently, let $\lambda'$ be obtained from $\lambda$ by replacing all of the blocks of
$\lambda$ that are disjoint from $s$ by their union.
To clarify this notation,
 let $\Pars'(n)$ denote the set of all set partitions of $[n]$ with at most 1 marked block.
For $\lambda'\in\Pars'(n)$  say that $s\in_P \lambda'$ if $s$ in an occurrence of $P$ in
$\lambda'$ as a regular set partition so that additionally the non-marked blocks of $\lambda'$
are exactly the blocks of $\lambda'$ that contain some element of $s$.
For $\lambda'\in \Pars'(n)$ and $\lambda\in\Pars(n)$,
 say that $\lambda$ is a \emph{refinement} of $\lambda'$ if the unmarked blocks in
$\lambda'$ are all parts in $\lambda$,
or equivalently, if $\lambda$ can be obtained from $\lambda'$ by further partitioning the marked block.
 Denote $\lambda$ being a refinement of $\lambda'$ as $\lambda \vdash \lambda'$.
 Thus, in the above computation of $M(f_{P,Q};n)$, letting $\lambda'$ be the marked partition obtained by replacing the blocks in $\lambda$ disjoint from $s$ by their union:
$$
M(f_{P,Q};n) = \sum_{s\in[n]^k} Q(s) \sum_{\substack{ \lambda'\in\Pars'(n) \\ s\in_P\lambda'}}\sum_{\substack{\lambda\in\Pars(n) \\ \lambda\vdash\lambda'}}1.
$$
Note that the $\lambda$ in the final sum above correspond exactly
to the set partitions of the marked block of $\lambda'$.
For $\lambda'\in\Pars'(n)$, let $|\lambda'|$ be the size of the marked block of
$\lambda'$. Thus,
$$
M(f_{P,Q};n)
  = \sum_{s\in[n]^k} Q(s) \sum_{\substack{ \lambda'\in\Pars'(n) \\ s\in_P\lambda'}} B_{|\lambda'|}.
$$
\begin{remark}
This is valid even when the marked block is empty.
\end{remark}
Dealing directly with the Bell numbers above will prove challenging, so instead
compute the generating function
$$
M(P,Q,n,x) := \sum_{s\in[n]^k} Q(s) \sum_{\substack{ \lambda'\in\Pars'(n) \\ s\in_P\lambda'}}x^{|\lambda'|}.
$$
After computing this,
extract the coefficients of $M(P,Q,n,x)$ and multiply them by the appropriate Bell numbers.

To compute $M(P,Q,n,x)$, begin by computing the value of the inner sum in terms of $s=(x_1<x_2<\ldots<x_k)$ that preserve the consecutivity relations of $P$ (namely those in $\bC(P)$).
Denote the equivalence classes in $P$ by $1,2,\ldots,\ell$.
Let $z_i$ be a representative of this $i^{th}$ equivalence class.
Then an element $\lambda' \in\Pars'(n)$ so that $s\in_P \lambda'$ can be thought of as a
set partition of $[n]$ into labeled equivalence classes $0,1,\ldots,\ell$,
where the $0^{th}$ class is the marked block, and the $i^{th}$ class is the block containing $x_{z_i}$.
Thus  think of the set of such $\lambda'$
as the set of maps $g:[n]\rightarrow \{0,1,\ldots,\ell\}$ so that:
\begin{enumerate} 
\item $g(x_j)=i$ if $j$ is in the $i^{th}$ equivalence class
\item $g(x)\neq i$ if $x<x_j$, $j\in\bF(P)$ and $j$ is in the $i^{th}$ equivalence class
\item $g(x)\neq i$ if $x>x_j$, $j\in\bL(P)$ and $j$ is in the $i^{th}$ equivalence class
\item $g(x)\neq i$ if $x_{j}<x<x_{j'}$, $(j,j')\in\bA(P)$ and $j,j'$ are in the $i^{th}$ equivalence class
\end{enumerate}
It is possible that no such $g$ will exist if one of the latter three properties must be violated by some $x=x_h$.
If this is the case, this is a property of the pattern $P$, and not the occurrence $s$, and thus, $M(f_{P,Q};n)=0$ for all $n$. Otherwise, in order to specify $g$,  assign the given values to $g(x_i)$ and each other $g(x)$ may be independently assigned values from the set of possibilities that does not violate any of the other properties. It should be noted that 0 is always in this set, and that furthermore, this set depends only which of the $x_i$ our given $x$ is between. Thus, there are some sets $S_0,S_1,\ldots,S_k\subseteq \{0,1,\ldots,\ell\}$, depending only on $s$, so that $g$ is determined by picking functions $$\{1,\ldots,x_1-1\}\rightarrow S_0,\{x_1+1,\ldots,x_2-1\}\rightarrow S_1,\ldots,\{x_k+1,\ldots,n\}\rightarrow S_k.$$ Thus the sum over such $\lambda'$ of $x^{|\lambda'|}$ is easily seen to be $$(x+r_0)^{x_1-1}(x+r_i)^{x_2-x_1-1}\cdots(x+r_{k-1})^{x_k-x_{k-1}-1}(x+r_k)^{n-x_k},$$ where $r_i=|S_i|-1$ (recall $\abs{S_i} > 0$, because $0 \in S_i$).
For such a sequence, ${\bf r}$ of rational numbers define
\begin{equation}\label{eqn:Mk_big}
M(k,Q,{\bf r},n,x,\bC(P)):=
\sum_{\begin{subarray}{c} 1\leq x_1<x_2<\ldots<x_k\leq n \\ \abs{x_{i} - x_{j}}=1 \text{ for } (i, j) \in \bC(P) \end{subarray}  }
Q(x_1,\ldots,x_k,n)\prod_{i=0}^k(x+r_i)^{x_{i+1}-x_i-1},
\end{equation}
where, as in Lemma \ref{lem:gf_sum},  using the notation $x_0=0,x_{k+1}=n+1.$

Note that the sum is empty if $\bC(P)$ contains nonconsecutive elements. We will henceforth assume that this is not the case. We call $j$ a follower if either $(j-1,j)$ or $(j,j-1)$ are in $\bC(P)$. Clearly the values of all $x_i$ are determined only by those $x_i$ where $i$ is not a follower. Furthermore, $Q$ is a polynomial in these values  and $n$.  If $j$ is the index of the $i$th non-follower then let $y_i = x_j - j+i$.  Now, sequences of $x_i$ satisfying the necessary conditions correspond exactly to those sequences with
$1 \le y_1 < y_2 < \cdots < y_{k - f} \le n-f$ where $f$ is the total number of followers. Thus,
\begin{align*}M(k, Q, {\bf r}, n, x,\bC(P)) & = \sum_{1\leq y_1<y_2<\ldots<y_{k-f}\leq n-f  }
\wt{Q}(y_1,\ldots,y_k,n)\prod_{i=0}^k(x+\wt{r_i})^{y_{i+1}-y_i-1}\\
& = M(k-f,\wt{Q},{\bf \wt{r}},n-f,x).\end{align*}
where the $\wt{r_i}$ are modified versions of the $r_i$ to account for the change from $\{ x_j\}$ to $\{ y_i\}$. In particular, if $x_j$ is the $(i+1)^{st}$ non-follower, then $\wt{r_i} = r_{j-1}$.

By Lemma \ref{lem:gf_sum}, $M(k-f,\wt{Q},{\bf \wt{r}},n-f,x)$ is a linear combination of terms of the form
$F(n) G(x) (x+r_i)^{n-k}$ for polynomials $F\in \Q[n]$ and $G \in \Q[x]$.
Thus, $M(f_{P,Q};n)$ can be written as a linear combination of terms of the form
$g_{r, d, \ell, s}(n)$ where $\ell$ is the number of equivalence classes in $P$
and $r, d, s$ are non-negative integers.
Therefore, by Lemma \ref{lem:bell_sums} $M(f_{P, Q};n)$ is a shifted Bell polynomial.

The bound for the upper shift index follows from the fact that
$M(f_{P,Q};n) = O(n^N B_n)$ and by \eqref{eqn:Bell_asymptotic} each term $n^\alpha B_{n+\beta}$ is of
an asymptotically distinct size.
To complete the proof of the result it is sufficient to bound the lower shift index of the
Bell polynomial.
By \eqref{eqn:Mk_big} it is clear the largest power of $x$ in each term is $(n-k)$.  Thus, from Lemma
\ref{lem:bell_sums},
 the resulting shift Bell polynomials can be written with minimum lower shift index $-k$.
 This completes the proof.
\end{proof}


Next  turn to the proof of Theorem \ref{thm:algebraAbstract}.
To this end,  introduce some notation.
 \begin{definition}
 Given three patterns $P_1,P_2,P_3$, of lengths $k_1,k_2,k_3$,
 say that a \emph{merge} of $P_1$ and $P_2$ onto $P_3$ is a pair of strictly increasing functions $m_1:[k_1]\rightarrow [k_3]$, $m_2:[k_2]\rightarrow [k_3]$ so that
\begin{enumerate}
\item $m_1([k_1])\cup m_2([k_2]) = [k_3]$
\item $m_1(i)\sim_{P_3} m_1(j)$ if and only if $i\sim_{P_1} j$, and $m_2(i)\sim_{P_3} m_2(j)$ if and only if $i\sim_{P_2} j$
\item $i\in\bF(P_3)$ if and only if there exists either a $j\in\bF(P_1)$ so that $i=m_1(j)$ or a $j\in\bF(P_2)$ so that $i=m_2(j)$
\item $i\in\bL(P_3)$ if and only if there exists either a $j\in\bL(P_1)$ so that $i=m_1(j)$ or a $j\in\bL(P_2)$ so that $i=m_2(j)$
\item $(i,i')\in\bA(P_3)$ if and only if there exists either a $(j,j')\in\bA(P_1)$ so that $i=m_1(j)$ and $i'=m_1(j')$ or a $(j,j')\in\bA(P_2)$ so that $i=m_2(j)$ and $i'=m_2(j')$
\item $(i, i') \in \bC(P_3)$ if and only if there exists either a $(j,j')\in\bC(P_1)$ so that $i=m_1(j)$ and $i'=m_1(j')$ or a $(j,j')\in\bC(P_2)$ so that $i=m_2(j)$ and $i'=m_2(j')$
\end{enumerate}
Such a merge is denoted as $m_1,m_2:P_1,P_2\rightarrow P_3$.
\end{definition}
Note that the last four properties above imply that given $P_1$ and $P_2$,  a merge
(including a pattern $P_3$) is uniquely defined by maps $m_1,m_2$ and
an equivalence relation $\sim_{P_3}$ satisfying (1) and (2) above.
\begin{lemma}\label{lem:merge}
Let $P_1$ and $P_2$ be patterns.  For any $\lambda$ there is a one-to-one correspondence:
\begin{equation}\label{eqn:mergepat}
\left\{(s_1, s_2): s_1\in_{P_1} \lambda, s_2\in_{P_2}\lambda \right\}\leftrightarrow
\left\{ P_3,
s_3 \in_{P_3} \lambda, \text{ and } m_1,m_2:P_1,P_2\rightarrow P_3\right\}.
\end{equation}

Moreover,  under this correspondence
\begin{align}\label{eqn:mergepoly}
 Q_{m_1, m_2, Q_1, Q_2}(s_3) :=& Q_1(z_{m_1(1)},z_{m_1(2)},\ldots,z_{m_1(k_1)},n)Q_2(z_{m_2(1)},z_{m_2(2)},\ldots,z_{m_2(k_2)},n)  \nonumber \\
 =Q_1(s_1)Q_2(s_2).
\end{align}
\end{lemma}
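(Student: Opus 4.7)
The plan is to exhibit the bijection in \eqref{eqn:mergepat} by an explicit ``sort-and-merge'' construction, and then to verify the (biconditional) merge axioms and the occurrence axioms for $s_3\in_{P_3}\lambda$ one by one. Given $s_1=(x_1<\cdots<x_{k_1})\in_{P_1}\lambda$ and $s_2=(y_1<\cdots<y_{k_2})\in_{P_2}\lambda$, I would set $\{z_1<\cdots<z_{k_3}\}=\{x_i\}\cup\{y_j\}$ (so $k_3\le k_1+k_2$), put $s_3=(z_1,\ldots,z_{k_3})$, and take $m_1,m_2$ to be the strictly increasing position maps $z_{m_1(i)}=x_i$, $z_{m_2(j)}=y_j$. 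Then I would build $P_3$ by declaring $\sim_{P_3}$ to be the pullback of $\sim_\lambda$ along $\ell\mapsto z_\ell$, $\bF(P_3)=m_1(\bF(P_1))\cup m_2(\bF(P_2))$, and analogously for $\bL,\bA,\bC$.

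The first step is to check that the forward map lands in the claimed set. Merge property (1) is immediate from $m_1([k_1])\cup m_2([k_2])=[k_3]$. Merge property (2) for $m_1$ follows from the iff in occurrence condition (2): $m_1(i)\sim_{P_3}m_1(j)$ iff $z_{m_1(i)}\sim_\lambda z_{m_1(j)}$ iff $x_i\sim_\lambda x_j$ iff $i\sim_{P_1}j$, and analogously for $m_2$. Merge properties (3)--(6) are by construction. To see $s_3\in_{P_3}\lambda$, the ordering and the equivalence conditions hold by construction; for each of the remaining conditions I would write any $i\in\bF(P_3)$ (respectively $\bL(P_3),\bA(P_3),\bC(P_3)$) as $m_1(j)$ or $m_2(j)$ for some $j$ in the corresponding set of $P_1$ or $P_2$, and import the required property of $z_i=x_j$ (or $y_j$) from $s_1\in_{P_1}\lambda$ (or $s_2\in_{P_2}\lambda$).

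The second step is the inverse map $(P_3,s_3,m_1,m_2)\mapsto(s_1,s_2)$ given by restriction, $s_1:=(z_{m_1(1)},\ldots,z_{m_1(k_1)})$ and similarly for $s_2$. Strict monotonicity of $m_1$ gives the ordering; the equivalence compatibility is the same iff chain run in reverse; and the firsts/lasts/arcs/consecutives conditions for $s_1\in_{P_1}\lambda$ follow from the merge biconditionals together with the corresponding implications for $s_3$ in $\lambda$. Mutual inverseness is then a sanity check: forward-then-backward recovers $(s_1,s_2)$ because restriction inverts sorting, and backward-then-forward recovers $(P_3,s_3,m_1,m_2)$ because merge property (1) forces the sorted union of $s_1$ and $s_2$ to equal $s_3$, merge properties (3)--(6) force $\bF(P_3),\bL(P_3),\bA(P_3),\bC(P_3)$ to equal the union of the images (no more and no less), and the iff in occurrence condition (2) forces $\sim_{P_3}$ to coincide with the pullback of $\sim_\lambda$. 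Finally, \eqref{eqn:mergepoly} is the substitution $z_{m_1(i)}=x_i$, $z_{m_2(j)}=y_j$ in the definition of $Q_{m_1,m_2,Q_1,Q_2}$.

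The main obstacle I anticipate is bookkeeping rather than mathematical depth: the merge definition uses \emph{biconditional} clauses for firsts/lasts/arcs/consecutives while the occurrence definition uses only \emph{one-directional} ``if'' clauses, so care is required to show that $P_3$'s combinatorial data is \emph{exactly} the union of the images and nothing larger, since this is what is needed both for $P_3$ to be uniquely determined by the construction and for the backward-then-forward composition to return the original tuple. A minor additional subtlety arises when $x_i=y_j$, so that $m_1(i)=m_2(j)$: one must check that the merge axioms on the overlap are automatically satisfied, which is true because $\sim_{P_3}$ was defined as the pullback of $\sim_\lambda$ and so inherits all needed consistencies from $\lambda$ itself.
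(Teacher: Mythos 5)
Your proposal is correct and matches the paper's argument in substance: the same bijection (sorted union with position maps $m_1,m_2$, $\sim_{P_3}$ forced as the pullback of $\sim_\lambda$, and $\bF,\bL,\bA,\bC$ of $P_3$ forced to be exactly the unions of images), with the same verifications of the occurrence and merge axioms and the same uniqueness observations. The only difference is presentational—the paper takes $(P_3,s_3,m_1,m_2)\mapsto(s_1,s_2)$ as the primary map and constructs its unique inverse, while you go in the opposite direction—so no further comment is needed.
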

\begin{proof}
Begin by demonstrating the bijection defined by Equation \eqref{eqn:mergepat}.
On the one hand, given $s_3\in_{P_3}\lambda$ given by  $z_1<z_2<\ldots<z_{k_3}$
and $m_1,m_2:P_1,P_2\rightarrow P_3$,   define $s_1$ and $s_2$ by the
sequences $z_{m_1(1)}<z_{m_1(2)}<\ldots <z_{m_1(k_1)}$
and $z_{m_2(1)}<z_{m_2(2)}<\ldots <z_{m_2(k_2)}$.
It is easy to verify that these are occurrences of the patterns $P_1$ and $P_2$
and furthermore that equation \eqref{eqn:mergepoly} holds for this mapping.

This mapping has a unique inverse:
Given $s_1$ and $s_2$, note that $s_3$ must equal the union $s_1\cup s_2$.
Furthermore, the maps $m_a$, for $a= 1, 2$, must be given by the unique function
so that $m_a(i)=j$ if and only if the $i^{th}$ smallest element of $s_a$ equals the $j^{th}$
smallest element of $s_3$. Note that the union of these images must be all of $[k_3]$.
In order for $s_3$ to be an occurrence of $P_3$ the equivalence relation $\sim_{P_3}$ must be that $i\sim_{P_3} j$ if and only if the $i^{th}$ and $j^{th}$ elements of $s_3$ are equivalent under $\lambda$.  Note that since $S_1$ and $S_2$ were occurrences of $P_1$ and $P_2$, that this must satisfy condition (2) for a merge. The rest of the data associated to $P_3$ (namely $\bF(P_3),\bL(P_3)$,  $\bA(P_3)$, and $\bC(P_3)$)
is now uniquely determined by $m_1,m_2,P_1,P_2$ and the fact that $P_3$ is a merge of $P_1$ and $P_2$ under these maps. To show that $s_3$ is an occurrence of $P_3$  first note that by construction the equivalence relations induced by $\lambda$ and $P_3$ agree. If $i\in\bF(P_3)$, then there is a $j\in\bF(P_a)$ with $i=m_a(j)$ for some $a,j$. Since $s_a$ is an occurrence of $P_a$, this means that the $j^{th}$ smallest element of $s_a$ in in $\First(\lambda)$. On the other hand, by the construction of $m_a$, this element is exactly $z_{m_a(j)} = z_i$. This if $i\in\bF(P_3)$, $z_i\in\First(\lambda)$. The remaining properties necessary to verify that $S_3$ is an occurrence of $P_3$ follow similarly. Thus, having shown that the above map has a unique inverse,  the proof of the lemma is complete.
\end{proof}

Recall, the number of singleton blocks is denoted $X_1$ and it is a simple statistic.
To illustrate this lemma return to the example of $X_1^2$ discussed prior to the lemma.
Let $P_1 = P_2$ be the pattern of length 1 with $\bA(P_1) = \phi$, $\bF(P_1) =  \bL(P_1) = 1$.
Then there are
 five possible merges of $P_1$ and $P_2$ into some pattern $P_3$.  The first choice of $P_3$ is $P_1$ itself.  In which case
$m_1(1) = m_2(1) = 1$. The latter choices of $P_3$ is the pattern of length 2 with
$\bF(P_3) = \bL(P_3) = \{ 1, 2\}, \bA(P_3)=\emptyset$. The equivalence relation on $P_3$ could be either the trivial one or the one that relates $1$ and $2$ (though in the latter case the pattern $P_3$ will never have any occurrences in any set partition). In either of these cases, there is a merge with
$m_1(1) = 1$ and $m_2(1) = 2$ and a second merge with $m_1(1) = 2$ and $m_2(1) = 1$.
As a result,
\begin{align*}
M(X_1^2; n) =& \sum_{\lambda \in \Pars(n)} X_1(\lambda)^2
= \sum_{\lambda \in \Pars(n)} \( \sum_{\begin{subarray}{c} x_1 \\  x_1 \in \First(\lambda) \\
x_1 \in \Last(\lambda) \end{subarray} } 1 \)^2 \\
=& \sum_{\lambda \in \Pars(n)} \sum_{ \begin{subarray}{c} x_1 \\ x_1 \in \First(\lambda) \\
x_1 \in \Last(\lambda) \end{subarray}}  \sum_{ \begin{subarray}{c} y_1 \\
y_1 \in \First(\lambda) \\
y_1 \in \Last(\lambda) \end{subarray}} 1
\ \ = \ \
2 \sum_{\lambda \in \Pars(n)} \sum_{ \begin{subarray}{c} x_1 <x_2 \\ x_1, x_2 \in \First(\lambda) \\
x_1, x_2 \in \Last(\lambda) \end{subarray}} 1 + \sum_{\lambda \in \Pars(n)} \sum_{\begin{subarray}{c} x_1 \\  x_1 \in \First(\lambda) \\
x_1 \in \Last(\lambda) \end{subarray} } 1
\end{align*}

\begin{proof}[Proof of Theorem \ref{thm:algebraAbstract}]
The fact that statistics are closed under pointwise addition and scaling follows immediately from the definition. Similarly, the desired degree bounds for these operations also follow easily. Thus
only closure and degree bounds for multiplication must be proved.
Since every statistic may be written as a linear combination of simple statistics of no greater degree, and since statistics are closed under linear combination, it suffices to prove this theorem for a product of two simple statistics. Thus let $f_i$ be the simple statistic defined by a pattern $P_i$ of size $k_i$ and a polynomial $Q_i$.
It must be shown that $f_1(\lambda)f_2(\lambda)$ is given by a statistic of degree at most $k_1+k_2+\deg(Q_1)+\deg(Q_2)$.

For any $\lambda$
$$
f_1(\lambda)f_2(\lambda) = \sum_{s_1\in_{P_1} \lambda, s_2\in_{P_2} \lambda} Q_1(s_1)Q_2(s_2).
$$
Simplify this equation using Lemma \ref{lem:merge}, writing this as a sum over occurrences of only a single pattern in $\lambda$.

Applying Lemma \ref{lem:merge},
\begin{align*}
f_1(\lambda)f_2(\lambda) & = \sum_{s_1\in_{P_1} \lambda, s_2\in_{P_2} \lambda} Q_1(s_1)Q_2(s_2)\\
& = \sum_{P_3} \sum_{m_1,m_2:P_1,P_2\rightarrow P_3} \sum_{s_3\in_{P_3} \lambda} Q_{m_1,m_2,Q_1,Q_2}(s_3),
\\
& =  \sum_{m_1,m_2:P_1,P_2\rightarrow P_3} f_{P_3, Q_{m_1,m_2,Q_1,Q_2}}(\lambda).
\end{align*}

Thus, the product of $f_1$ and $f_2$ is a sum of simple characters.
Note that the quantity is a polynomial of $s_3$ which is denoted $Q_{m_1,m_2,Q_1,Q_2}(s_3).$
Finally, each pattern $P_3$ has size at most $k_1+k_2$ and each polynomial $Q_{m_1,m_2,Q_1,Q_2}$ has degree at most $\deg(Q_1)+\deg(Q_2)$. Thus the degree of the product is at most the sum of the degrees.
\end{proof}


\section{More Data}\label{sec:examples}\label{sec:Data}
This section contains some data for the dimension and intertwining exponent statistics.
The moment formulae of Theorem \ref{thm:structure} for $k\le 22$ and
the moment formulae for the intertwining exponent for $k\le 12$ have been computed and are available
at \cite{webpage}.    Moreover, the values
$f(n,0, B)$ for $n\le 238$ and $f_{(i)}(n, 0, B)$ for $n\le 146$ are available.
These sequences can also
be found on Sloane's Online Encyclopedia of integer sequences \cite{sloane}.

The remainder of this section contains a small amount of data and observations regarding
 the distributions $f(n,0,B)$ and $f_{(i)}(n,0,B)$
and
regarding the shifted Bell polynomials of Theorems \ref{thm:structure} and \ref{thm:structureInter}.

\subsection{ Dimension Index}\label{sec:dataDimension}
\begin{center}
\begin{figure}[h!]
\begin{tabular}{c| rrrrrrrrrrrrr}
$n \backslash d$ & 0 & 1 & 2& 3 & 4 & 5 & 6& 7 & 8 & 9& 10 & 11 & 12 \\
\hline
0 & 1 & & & & & & & & & & & & \\
1 & 1 & & & & & & & & & & & & \\
2 & 2 & & & & & & & & & & & & \\
3 & 4 & 1 & & & &  & & & & & & & \\
4 & 8 & 4 & 3 & & & & & & & & & & \\
5 & 16 & 12 & 13 & 9 & 2 & & & & & & & & \\
6 & 32 & 32 & 42 & 42 & 35 & 12 & 8 & & & & & & \\
7 & 64 & 80 & 120 & 145 & 159 & 133 & 86 & 52 & 32 & 6 & & & \\
8 & 128 & 192 & 320 & 440 & 559 & 600 & 591 & 440 & 380 & 248 & 164 & 48 & 30
\end{tabular}
\caption{A table of the dimension exponent $f(n, 0, d)$.}\label{tab:dim}
\end{figure}
\end{center}

A couple of easy observations:
It is clear that $$f(n, 0, 0) = 2^{n-1}.$$  That is the number of set partitions of $[n]$ with dimension exponent $0$ is $2^{n-1}$.
Set partitions of $[n]$ that have dimension exponent 0 must have $n$ appearing in a singleton set or
it must appear in a set with $n-1$, thus the result is obtained by recursion.
Additionally, the number of set partitions of $[n]$ with dimension exponent equal to $1$ is $n2^{n-1}$, that is
$$f(n, 0, 1) = n 2^{n-1}.$$
Curiously,
the numbers $f(n,0,B)$ are smooth (roughly they have many small prime factors), for reasonably sized $B$.
This can be established by using the recursion of Theorem \ref{thm:recursion}.
For example,
 \begin{align*}
f(100,0, 979) =  &
2^{11} \cdot 3^7 \cdot 5^3 \cdot7^2 \cdot11 \cdot797 \cdot 12269 \cdot 12721 \\
   &    \cdot 342966248369 \cdot 2647544517313 \cdot 1641377154765701  \\
   & \cdot 16100683847944858147992523687926541327031916811919 \\
f(100,0, 2079)= &
 2^{27} \cdot 3^{14} \cdot 5^7 \cdot 7^4 \cdot 11^2 \cdot 13^2 \cdot 17 \cdot 19 \cdot 23 \cdot 29 \cdot 31 \cdot 24679914019  \\
 & \cdot 58640283519733 \cdot 194838881932339884007114639638682100019517
\end{align*}
Note that $f(100, 0, 979)$ has 111 digits and $f(100, 0, 2079)$ has about 100 digits.

In the notation of Theorem \ref{thm:structure}
these are some values of the first few moments of $d(\lambda)$.
{\small
\begin{verbatim}
P_{3,0}(n) =  0 +1n
P_{3,1}(n) =  -1/3 +6n +3n^2
P_{3,2}(n) =  +8/3 -45n -12n^2
P_{3,3}(n) =  +18 +51n +12n^2 +1n^3
P_{3,4}(n) =  -131/3 -45n -6n^2
P_{3,5}(n) =  +42 +12n
P_{3,6}(n) =  -8

P_{4,0}(n) =  0 +1n +3n^2
P_{4,1}(n) =  -21/2 -18n -20n^2
P_{4,2}(n) =  -36 +116/3n +72n^2 +6n^3
P_{4,3}(n) =  -5/6 -166/3n -162n^2 -24n^3
P_{4,4}(n) =  -103/3 +312n +150n^2 +16n^3 +1n^4
P_{4,5}(n) =  -81/2 -812/3n -90n^2 -8n^3
P_{4,6}(n) =  +409/3 +168n +24n^2
P_{4,7}(n) =  -104 -32n
P_{4,8}(n) =  +16

P_{5,0}(n) =  0 +1n +10n^2
P_{5,1}(n) =  +1036/15 +50/3n -35n^2 +15n^3
P_{5,2}(n) =  -1373/30 +95/2n -180n^2 -110n^3
P_{5,3}(n) =  +4415/3 -1370/3n +2030/3n^2 +300n^3 +10n^4
P_{5,4}(n) =  +47/2 -605/6n -2350/3n^2 -390n^3 -40n^4
P_{5,5}(n) =  +1049/3 +15n +1380n^2 +330n^3 +20n^4 +1n^5
P_{5,6}(n) =  +4673/30 -2485/2n -2750/3n^2 -150n^3 -10n^4
P_{5,7}(n) =  -95/3 +3005/3n +420n^2 +40n^3
P_{5,8}(n) =  -1010/3 -520n -80n^2
P_{5,9}(n) =  +240 +80n
P_{5,10}(n) =  -32

P_{6,0}(n) =  0 +1n +25n^2 +15n^3
P_{6,1}(n) =  +1655/6 +185/6n -309n^2 -120n^3
P_{6,2}(n) =  -661817/90 -17539/15n +1015n^2 +495n^3 +45n^4
P_{6,3}(n) =  +149203/45 +12779/10n +1935/2n^2 -1770n^3 -340n^4
P_{6,4}(n) =  -1118236/45 +36605/3n -3460n^2 +10420/3n^3 +840n^4 +15n^5
P_{6,5}(n) =  -121658/9 +3887/2n -8385/2n^2 -11450/3n^3 -765n^4 -60n^5
P_{6,6}(n) =  -1547/9 +1133n +3485n^2 +3960n^3 +615n^4 +24n^5 +1n^6
P_{6,7}(n) =  -38697/10 +7573/5n -13695/2n^2 -6940/3n^3 -225n^4 -12n^5
P_{6,8}(n) =  -12653/90 +3410n +3965n^2 +840n^3 +60n^4
P_{6,9}(n) =  +665 -2980n -1560n^2 -160n^3
P_{6,10}(n) =  +2060/3 +1440n +240n^2
P_{6,11}(n) =  -528 -192n
P_{6,12}(n) =  +64
\end{verbatim}
}
These formulae exhibit a number of properties.
Here is a list of some of them.
\begin{enumerate}
\item Using the fact that $B_{n+k} \approx n^k B_n$, each moment $M(d^k;n)$
has a number of terms with asymptotic of size equal to $n^{2k}B_n$, up to powers of $\log(n)$ (or $\alpha_n$).
Call these terms the leading powers of $n$.
The leading `power' of $n$ contribution is equal to
$$(n-2T)^kB_{n+k}$$
where $T$ is the operator given by $TB_{m} = B_{m+1}$.
For example the leading order $n$ contributions for the average is
$$nB_{n+1} - 2 B_{n+2}$$
and the leading order contribution for the second moment is
$$n^2B_{n+2} - 4n B_{n+3} + 4 B_{n+4}.$$
Structure of this sort is necessary because of the asymptotic normality of the dimension exponent (see the forthcoming work
\cite{CDKR2}).  The next remark also concerns this sort of structure.

\item The next order $n$ terms of $M(d^k;n)$ have size roughly $n^{2k-1} B_n$ and have the shape
$$\(\sum_{j\ge 0} C_j (-1)^{j+1} \binom{k}{j} n^{k-j} T^{k+j-1}\)B_n$$
where the constants $C_j$ are
$$C_j = 2^{j-3} (17-j) j.$$

\item The generating function for the polynomials $P_{0,k}(n)$ seems to be
$$\sum_{k\ge 0} P_{0,k}(n) \frac{X^k}{k!} = \exp\( (e^X - 1  - X) n\).$$
We do not have a proof of this observation.
\end{enumerate}

As in the introduction, let
$ S_k(d; n) : = \frac{1}{B_n} \sum_{\lambda \in \cS_n} \( d(\lambda) - \frac{1}{B_n}M(d; n)\)^k.$
From Proposition \ref{prop:Bn+k} and the formulae for $M(d^k;n)$ deduced from
Theorem \ref{thm:structure} and stated in Section \ref{sec:dataDimension}
and using {\rm SAGE}  the asymptotic expansion of the first few
$S_k$ are:
\begin{align*}
S_2(d;n)  =& \frac{\alpha_n^2 - 7\alpha_n + 17}{(\alpha_n+1) \alpha_n^3} n^3   \\ &+
\frac{-8\alpha_n^7 - 29\alpha_n^6 - 136\alpha_n^5 - 207\alpha_n^4 + 69\alpha_n^3 + 407\alpha_n^2 + 116\alpha_n - 80}{2\alpha_n^4 (\alpha_n+1)^4}n^2 + O(n) \\
S_3(d;n)  = &   \frac{6\alpha_n^4 - 83\alpha_n^3 + 435\alpha_n^2 - 732\alpha_n - 881}{3(\alpha_n+1)^3\alpha_n^4} n^4
+ O\(\frac{n^3}{\alpha_n^2}\) \\
%
S_4(d;n) =& 3\( \frac{\alpha_n^2 - 7\alpha_n + 17}{(\alpha_n+1) \alpha_n^3}\)^2 n^6 + O\(\frac{n^5}{\alpha_n^3}\)  \\
S_5(d;n) =& \frac{10}{3} \(\frac{\alpha_n^2 - 7\alpha_n + 17}{(\alpha_n+1)\alpha_n^3} \)
\( \frac{6\alpha_n^4 - 83\alpha_n^3 + 435\alpha_n^2 - 732\alpha_n - 881}{(\alpha_n+1)^3\alpha_n^4}\)n^7
+ O\(\frac{n^6}{\alpha_n^4}\) \\
S_6(d;n) =&  15\( \frac{\alpha_n^2 - 7\alpha_n + 17}{(\alpha_n+1) \alpha_n^3}\)^3 n^9 + O\(\frac{n^8}{ \alpha_n^5}\)
 \end{align*}

\begin{remark}
These asymptotics support the claim that the dimension exponent is normally distributed with
mean asymptotic to $\frac{n^2}{\log(n)}$ and standard deviation
$\sqrt{ \frac{n^3}{\log(n)^2}}$. This result will be established  in forthcoming work \cite{CDKR2}.
\end{remark}

\subsection{Intertwining Index}\label{sec:dataInter}

Table \ref{tab:intr} contains the distribution for of the intertwining exponent for the first few $n$.
\begin{center}
\begin{figure}[h!]
\begin{tabular}{c | rrrrrrrrrrrrr}
$n \backslash B$ &  0 & 1 & 2& 3 & 4 & 5 &6&7& 8 & 9 & 10 & 11 & 12\\
\hline
0 & 1 &  & & &  & & & & & &  & &\\
1 & 1 & & &  & & & & & & & &  &\\
2 & 2 &  & & &  & & & & & &  & & \\
3 & 5 & & & & & & & & & & & & \\
4 & 14 & 1 &  & & &  & & & & &  & & \\
5 & 42 & 9 & 1 & & &  & & & & & & & \\
6 & 132 & 55 & 14 & 2 & &  & &  & & & & & \\
7 & 429 & 286 & 120 & 35 & 6 & 1  & & & && &  & \\
8 & 1430 & 1365 & 819 & 364 & 119 & 35 & 7 & 1 & &  & & & \\
9& 4862 & 6188 & 4900 & 2940 & 1394 & 586 & 203 & 59 & 13 & 2 & & & \\
10 & 16796 & 27132 & 26928 & 20400 & 12576 & 6846 & 3246 & 1358 & 493 & 153 & 38 & 8 & 1
\end{tabular}
\caption{A table of the distribution of the intertwining exponent $f_{(i)}(n, 0, B)$.  Recall $i(\lambda) = cr_2(\lambda)$.}
\label{tab:intr}
\end{figure}
\end{center}

In the notation of Theorem \ref{thm:structureInter}
these are some values of the first few moments of $i(\lambda) = cr_2(\lambda)$.
{\small
\begin{verbatim}
Q_{3,0}(n) =  +19/192 -29/96n +1/16n^2 +1/8n^3
Q_{3,1}(n) =  +331/192 -193/96n -17/16n^2 +3/8n^3
Q_{3,2}(n) =  -25/6 -743/96n -1/4n^2 +3/8n^3
Q_{3,3}(n) =  +775/64 +449/96n -1/16n^2 +1/8n^3
Q_{3,4}(n) =  -451/32 -619/96n -15/16n^2
Q_{3,5}(n) =  +2045/192 +75/32n
Q_{3,6}(n) =  -125/64

Q_{4,0}(n) =  +4387/172800 +103/360n -11/32n^2 0n^3 +1/16n^4
Q_{4,1}(n) =  -3343/10800 +787/144n -7/16n^2 -7/4n^3 +1/4n^4
Q_{4,2}(n) =  -25453/3456 +7777/288n -335/48n^2 -23/8n^3 +3/8n^4
Q_{4,3}(n) =  -16681/8640 -4303/288n -49/8n^2 -9/8n^3 +1/4n^4
Q_{4,4}(n) =  +963509/34560 +23891/480n +6n^2 -5/8n^3 +1/16n^4
Q_{4,5}(n) =  -637751/14400 -8197/288n -53/12n^2 -5/8n^3
Q_{4,6}(n) =  +126773/3456 +1745/96n +75/32n^2
Q_{4,7}(n) =  -3425/192 -125/32n
Q_{4,8}(n) =  +625/256

Q_{5,0}(n) =  -107993/138240 -593/69120n +569/1152n^2 -175/576n^3 -5/192n^4 +1/32n^5
Q_{5,1}(n) =  -79109/27648 -67769/7680n +9859/1152n^2 +995/576n^3 -115/64n^4 +5/32n^5
Q_{5,2}(n) =  +5436923/138240 -1228273/11520n +5925/128n^2 +815/96n^3 -925/192n^4 +5/16n^5
Q_{5,3}(n) =  -29849/512 -92287/6912n +1375/16n^2 -65/32n^3 -415/96n^4 +5/16n^5
Q_{5,4}(n) =  +1825783/27648 -2270759/13824n -6497/576n^2 +865/288n^3 -105/64n^4 +5/32n^5
Q_{5,5}(n) =  -1092827/138240 +9971653/69120n +21119/288n^2 +725/96n^3 -145/192n^4 +1/32n^5
Q_{5,6}(n) =  -14859283/138240 -6747031/34560n -44905/1152n^2 -1145/576n^3 -25/64n^4
Q_{5,7}(n) =  +188749/1536 +656965/6912n +7225/384n^2 +125/64n^3
Q_{5,8}(n) =  -2168275/27648 -61625/1536n -625/128n^2
Q_{5,9}(n) =  +258125/9216 +3125/512n
Q_{5,10}(n) =  -3125/1024
\end{verbatim}
}
We conjecture that $Q_{j}(n) = 0$ for all $j<0$.

The formulae stated above for  $S_k(i;n) := \frac{1}{B_n} \sum_{\lambda \in
\Pars(n)} \( i(\lambda) - M(i;n)/B_n\)^k$
with \eqref{eqn:Bell_asymptotic} give
{\small
\begin{align*}
S_2(i;n) =&\frac{3 \alpha_n^2 - 22\alpha_n + 56}{9\alpha_n^3 (\alpha_n+1)} n^3 \\
& + \frac{-16\alpha_n^7 - 52\alpha_n^6 - 204\alpha_n^5 - 155\alpha_n^4 - 126\alpha_n^3 - 12\alpha_n^2 + 230\alpha_n + 175}{8\alpha_n^4(\alpha_n+1)^4}n^2  + O\( n\) \\
S_3(i;n) =&    \frac{(\alpha_n - 5)(4\alpha_n^3 - 31\alpha_n^2 + 100\alpha_n + 99)}{8 \alpha_n^4 (\alpha_n+1)^3} n^4
+ O\(\frac{n^3}{\alpha_n^3}\) \\
S_4(i;n) =& 3 \(\frac{3\alpha_n^2 - 22\alpha_n + 56)^2}{9 \alpha_n^3(\alpha_n+1)}\)^2 n^6
+ O\( \frac{n^5}{\alpha_n^3}\)
\\
 \\
S_5(i;n) =&  5 \frac{(\alpha_n - 5)(3\alpha_n^2 - 22\alpha_n + 56)(4\alpha_n^3 - 31\alpha_n^2 + 100\alpha_n + 99)}{36 \alpha_n^7 (\alpha_n+1)^4} n^7 + O\(n^6\)
\\
S_6(i;n) =& 15 \(\frac{3\alpha_n^2 - 22\alpha_n + 56)^2}{9 \alpha_n^3(\alpha_n+1)}\)^3 n^9 + O(n^8)
\end{align*}
}



\begin{thebibliography}{99}


\bibitem{bergeron} M. Aguiar, N. Bergeron, N. Thiem,
{\it Hopf monoids from class functions on unitriangular matrices.} preprint.


\bibitem{aguiar} M. Aguiar, et. al.,
{\it Supercharacters, symmetric functions in noncommuting variables, and related Hopf algebras.} Adv. Math. {\bf 229} (2012), no. 4, 2310--2337.

\bibitem{am} M. Aguiar and S. Mahajan, {\it Monoidal functors, species and Hopf algebras.}
With forewords by Kenneth Brown and Stephen Chase and Andr\'e Joyal. CRM Monograph Series, 29. American Mathematical Society, Providence, RI, 2010.

\bibitem{aldous} D. Aldous, {\it Triangulating the circle at random.}
Amer. Math. Monthly. {\bf 101} (1994), 223--233.

\bibitem{agz} Anderson, Guionnet, and Zeitouni, \emph{Introduction to random matrices}. Cambridge Press. (2009).

\bibitem{andre1} C. Andr\'e, {\it Basic characters of unitriangular group.} Journal of Algebra {\bf 175} (1995), 287--319.

\bibitem{andre2} C. Andr\'e, {\it Irreducible characters of finite algebra groups.} Matrices and group Representations Coimbra, 1998 Textos Mat. S\'er B {\bf 19} (1999), 65--80.

\bibitem{andre3} C. Andr\'e, {\it Basic characters of the unitriangular gorup (for arbitrary primes).} Proceedings of the American Mathematical Society {\bf 130} (2002), 1934--1954.

\bibitem{ADS}
E. Arias-Castro, P. Diaconis, R. Stanley, {\it A super-class walk on upper-traingular matrices.}
 J. Algebra {\bf  278} (2004), no. 2, 739--765.

\bibitem{bdj} J. Baik, P. Deift, and K. Johansson, {\it On the distribution of the length of the longest
increasing subsequence of random permutations.} J. Amer. Math. Soc. {\bf 12} (1999), 1119--1178.

\bibitem{BT} N. Bergeron and N. Thiem,
{\it A supercharacter table decomposition via power-sum symmetric functions.} preprint.


\bibitem{BNV} S. Blackburn, P. Neumann, G. Venkataraman, {\it Enumeration of
finite groups.} Cambridge Press (2007).


\bibitem{cddsy} W. Y. C. Chen, E. Y. P. Deng, R. R. X. Du, R. P. Stanley, {\it Crossings and nestings of matchings and partitions.}
Trans. Amer. Math. Soc. {\bf 359} (4) (2007), 1555--1575.

\bibitem{CDKR2} B. Chern, P. Diaconis, D. M. Kane, and R. C. Rhoades, {\it Asymptotic normality of
set partition statistics associated with supercharacters.} in preparation.

\bibitem{fg} P. Diaconis, J. Fulman and R. Guralnick, {\it  On fixed points of random permutations.}
Jour. Alg. Comb. {\bf 28} (2008), 189--218.

\bibitem{di} P. Diaconis and I. M. Isaacs, {\it Supercharacters and superclasses for algebra groups.}Trans. Amer. Math. Soc. {\bf 360} (2008), 2359--2392.

\bibitem{dt} P. Diaconis and N. Thiem, {\it Supercharacter formulas for pattern groups.} Trans. Amer. Math. Soc. {\bf 361}
(2009), 3501--3533.

\bibitem{deBruijn} N. G. de Bruijn, {\it Asymptotic Methods in Analsysis.} Dover, N.Y.

\bibitem{et1} P. Erd\"os and P. Tur\'an, {\it On some problems of statistical group theory, I}. Z. Whhr. Verw. Gebiete {\bf 4} (1965), 151--163.

\bibitem{et2} P. Erd\"os and P. Tur\'an, {\it On some problems of statistical group theory, II.} Acta. Math. Sci. Hung. {\bf 18}
(1967), 151--163.

\bibitem{et3} P. Erd\"os and P. Tur\'an, {\it On some problems of statistical group theory, III.} Act. Math. Acad. Sci. Hun.
{\bf 18}  (1967), 309--320.

\bibitem{et4} P. Erd\"os and P. Tur\'an, {\it On some problems of statistical group theory, IV.}

\bibitem{et5} P. Erd\"os and P. Tur\'an, {\it On some problems of statistical group theory, V.} Periodica Math. Hung.
{\bf 1} (1971), 5--13.

\bibitem{et6} P. Erd\"os and P. Tur\'an, {\it On some problems of statistical group theory, VI.} J. Ind. Math. Soc. {\bf 34}
(1970), 175--192.

\bibitem{et7} P. Erd\"os and P. Tur\'an, {\it On some problems of statistical group theory, VII.} Period. Math. Hung. {\bf 2}
(1972), 149--163.


\bibitem{forrester} P. Forrester,  {\it Log gases and random matrices.} Princeton (2010).

\bibitem{fristed} B. Fristed, {\it The structure of random partitions of large sets.} Technical Report Dept. of Mathematics, University of Minnesota (1987), 86--154.

\bibitem{fulman} Fulman, {\it Random matrix theory over finite fields.} Bull. Amer. Math. Soc. {\bf 34} (2002), 51--85.


\bibitem{garsia} A. Garsia, {\it An expose of the Mullin-Rota theory of polynomials of binomial type.} Linear and
Multilinear Algebra. {\bf 1} (1973), 47--65.

\bibitem{gkp} R. L. Graham, D. E. Knuth, O. Patashnik, {\it Concrete mathematics. A foundation for computer science.}
 Second edition. Addison-Wesley Publishing Company, Reading, MA, 1994.

\bibitem{hr} Halverson and Ram, {\it Partition algebras.}
European J. Combin. {\bf 26} (2005), no. 6, 869--921.
%


\bibitem{hwang}  H.K. Hwang, {\it On Convergence Rates in the Central Limit Theorems for Combinatorial Structures.} European J. Combin. 19 (1998), no. 3, 329--343.

\bibitem{io} V. Ivanev and G. Olshanski,
{\it Kerov's central limit theorem for Plancheral measure on Young diagrams.}
In symmetric functions 2001, Surveys of developments and prospectives, Kluwer, (2002), 93--151.

\bibitem{kasraouiZ} A. Kasraoui, {\it Average values of some $Z$-paramters in a random set partition}.
Electron. J. Combin. {\bf 18} (2011), no. 1, Paper 228, 42 pp.


\bibitem{kasragui} A. Kasraoui, {\it On the limiting distribution of some numbers of crossings in set partitions.}
preprint.

\bibitem{kz} A. Kasraoui and J. Zeng, {\it Distribution of crossings, nestings and alignments of two edges in matchings
and partitions.}   Electron. J. Combin. {\bf 13} (2006), no. 1, Research Paper 33, 12 pp.

\bibitem{kerov} S. Kerov,
{\it Asymptotic representation theory of the symmetric group and its applications in analysis.}
Translated from the Russian manuscript by N. V. Tsilevich. With a foreword by A. Vershik and comments by G. Olshanski. Translations of Mathematical Monographs, 219. American Mathematical Society, Providence, RI, 2003.

\bibitem{kv} S. Kerov, and A. Vershik. {\it Asymptotics of the Plancherel measure of the symmetric group and the limiting
form of Young tableaux.} Docl. Akad. Nauk. {\bf 233} (1977), 1024--1027.

\bibitem{kmw} A. Knopfmacher, T. Mansour, and S. Wagner, {\it
Records in set partitions.}  Electronic Journal of Combinatorics {\bf 17} (2010) R109 (14pp.).

\bibitem{knuthBiType} D. Knuth, {\it Convolution Polynomials.} Mathematica Journal {\bf 2} (1992), 67--78.

\bibitem{knuth4a} D. Knuth, {\it The art of computer programming.} Vol 4A. Addison-Wesley.

\bibitem{kreweras} G. Kreweras, {\it Sur les partitions noncrois\'ees d'un cycle.} Discrete Math. {\bf 1} (1972), 333--350.

\bibitem{ls} B. Logan and L. Shepp,
{\it A variational problem for random Young tableaux.} Adv. Math. {\bf 26} (1977), 206--222.

\bibitem{lps} W. F. Lunnon, P. A. B.  Pleasants, N. M.  Stephens,
{\it Arithmetic properties of Bell numbers to a composite modulus. I.}
Acta Arith. {\bf 35} (1979), no. 1, 1--16.

\bibitem{mansour} T. Mansour, {\it Combinatorics of set partitions.}
Discrete Mathematics and its Applications (Boca Raton). CRC Press, Boca Raton, FL, 2013.

\bibitem{ms}  T. Mansour and M. Shattuck, {\it Enumerating finite set partitions according to the number of connectors.}
Online Journal of Analytic Combinatorics {\bf 6} (2011) Article 3(17pp.).


\bibitem{marberg} E. Marberg, \emph{Actions and Identities on Set Partitions}.
Electronic Journal of Combinatorics {\bf 19} (2012), no. 1, Research Paper 28, 31 pages.

\bibitem{marberg2} E. Marberg, {\it Crossings and nestings in colored set partitions.} preprint.

\bibitem{mnw} P. L. Montgomery, S. Nahm, S. S. Wagstaff Jr.
{\it The period of the Bell numbers modulo a prime.}
Math. Comp. {\bf 79} (2010), no. 271, 1793--1800.


\bibitem{nw} A.  Nijenhuis and H. S. Wilf, {\it Combinatorial algorithms. For computers and calculators. }
Second edition. Computer Science and Applied Mathematics. Academic Press, Inc. [Harcourt Brace Jovanovich, Publishers], New York-London, 1978.

\bibitem{newman} M. F. Newman. {\it Groups of prime-power order.} Groups--Canberra 1989, 49--62,
Lecture Notes in Math., 1456, Springer, Berlin, 1990.




\bibitem{pitman1} J. Pitman, {\it Some probabilistic aspects of set partitions.} Amer. Math. Monthly, {\bf 104} (1997),
201--209.

\bibitem{pitman2} J. Pitman, {\it Combinatorial stochastic processes.} Springer, (2006) Berlin.

\bibitem{webpage} R. C. Rhoades, \verb+http://math.stanford.edu/~rhoades/RESEARCH/papers.html+

\bibitem{riordan} J. Riordan, {\it An introduction to combinatory analysis.} (1958) Wiley, N.Y.

\bibitem{ruski} F. Ruskey, {\it Combinatorial Object Server.} \verb+http://www.theory.csc.uvic.ca/~cos/+

\bibitem{shattuck} M. Shattuck, {\it Recounting the number of rises, levels, and descents in finite set
partitions.} Integers, {\bf 10} (2010), 179--185.

\bibitem{simion} R. Simion, {\it Noncrossing partitions.} Discrete Math. {\bf 217} (2000), 367--409.

\bibitem{sloane} N. Sloane, Online Encyclopedia of Integer Sequences. \verb+http://oeis.org/+

\bibitem{stam} A. J. Stam, {\it Generation of random partitions of a set by an urn model.}
J. Combin. Theory A, {\bf 35} (1983), 231--240.

\bibitem{stanley} R. P. Stanley, {\it Increasing and decreasing subsequences and their variants.}
International Congress of Mathematicians. Vol. I, 545--579, Eur. Math. Soc., Z\"urich, (2007).

\bibitem{stanley1} R. P. Stanley, {\it Enumerative combinatorics. Volume 1.}
Second edition. Cambridge Studies in Advanced Mathematics, 49. Cambridge University Press, Cambridge, (2012).

\bibitem{stanley2} R. P. Stanley, {\it Enumerative combinatorics. Vol. 2.} With a foreword by Gian-Carlo Rota and appendix 1 by Sergey Fomin. Cambridge Studies in Advanced Mathematics, 62. Cambridge University Press, Cambridge, 1999.


\bibitem{turan} P. Tur\'an, {\it Remarks on the characters belonging to the irreducible representations of the symmetric group $S_n$ of $n$ letters.}
 Fourier analysis and approximation theory (Proc. Colloq., Budapest, 1976), Vol. II, pp. 871--875,
 Colloq. Math. Soc. J‡nos Bolyai, 19, North-Holland, Amsterdam-New York, 1978.

\bibitem{yan} N. Yan, {\it Representation Theory of the finite unipotent linear group.}  Unpublished Ph.D. Thesis, Department of
Mathematics, Pennsylvania State University, 2001.



\end{thebibliography}
\end{document}